\renewcommand{\Pr}{\mathbb{P}}
\newcommand{\E}{\mathbb{E}}
\newcommand{\N}{\mathbb{N}}
\newcommand{\Z}{\mathbb{Z}}
\newcommand{\R}{\mathbb{R}}
\renewcommand{\AA}{\mathcal{A}}
\newcommand{\CC}{\mathcal{C}}
\newcommand{\joint}{{\mathrm{joint}}}
\newcommand{\indep}{{\mathrm{ind}}}
\newcommand{\Sim}{{\mathrm{sim}}}
\newcommand\unnumberedfootnote[1]{ %
  \let\temp=\thefootnote %
  \renewcommand{\thefootnote}{}%
  \footnote{#1}%
  \let\thefootnote=\temp%
  \addtocounter{footnote}{-1}}
\newcommand{\abs}[1]{\lvert#1\rvert} 
\newcommand{\Abs}[1]{\left\lvert#1\right\rvert}
\newcommand{\norm}[1]{\lVert#1\rVert} 
\newcommand{\ind}[1]{\mathbbm{1}_{\{#1\}}} 
\newcommand{\indset}[1]{\mathbbm{1}_{#1}} 
\newtheorem{theorem}{Theorem}
\newtheorem{proposition}{Proposition}[section]
\newtheorem{lemma}[proposition]{Lemma}
\theoremstyle{definition}
\newtheorem{remark}[proposition]{Remark}
\numberwithin{equation}{section}
\begin{document}

\title{\LARGE Directed random walk on the backbone of an oriented
  percolation cluster}

\author{\sc Matthias Birkner, Ji\v r\'i \v Cern\' y,  \\ \sc Andrej
  Depperschmidt and Nina Gantert
  \\[2ex]
}
\date{16th~June~2013}

\maketitle
\unnumberedfootnote{\emph{AMS 2010 subject classification.} 60K37, 60J10, 82B43, 60K35 }
\unnumberedfootnote{\emph{Keywords and phrases.} Random walk, dynamical
  random environment, oriented percolation, supercritical cluster,
  central limit theorem in random environment}

\begin{abstract}
  \noindent We consider a directed random walk on the backbone of the
  infinite cluster generated by supercritical oriented percolation, or
  equivalently the space-time embedding of the ``ancestral lineage''
  of an individual in the stationary discrete-time contact process. We
  prove a law of large numbers and an annealed central limit theorem
  (i.e., averaged over the realisations of the cluster) using a
  regeneration approach. Furthermore, we obtain a quenched central
  limit theorem (i.e.\ for almost any realisation of the cluster) via
  an analysis of joint renewals of two independent walks on the same
  cluster.
\end{abstract}

\section{Introduction and main results} 
\label{sec:introduction}

In mathematical population genetics it is often important to
understand the ancestral relationship of individuals to deduce
information about the genetic variability in the population, see
\citet{MR2759587}. In spatial population models the ancestry of a
collection of individuals can in general be described by a collection
of coalescing random walks in a random environment. Depending on the
forwards in time evolution of the population such random environments
can be rather complicated.

In the present paper we study the ancestral line, that is, the
evolution of the positions of the parents, of a single individual in
a simple model allowing for locally varying population sizes. More precisely
we consider a discrete-time variant of the contact process: a
$\{0,1\}^{\Z^d}$-valued Markov chain $(\eta_n)_n$ (see below for
precise definitions) where $\eta_{n}(x) = 1$ is interpreted as the
event that the site $x \in \Z^d$ is inhabited by a particle in
generation $n$. We can view this contact process as a ``toy example''
of a spatial stochastic population model with fluctuating population
sizes and local dispersal: Sites $x$ can have \emph{carrying capacity}
$0$ or $1$ in a given generation $n$, and in order for a particle at
site $x$ to be present not only must the corresponding site be {\sl
  open} (i.e.~have the carrying capacity~$1$) but there must also have
been a particle in the neighbourhood of $x$ in the previous generation
$n-1$ who put her \emph{offspring} there. If there was more than one
particle in the neighbourhood of $x$ in generation $n-1$, we think of
randomly assigning one of them to put an offspring to site $x$. Note
that this implicitly models a density-dependent population regulation
because particles in sparsely-populated regions will now have a higher
chance of actually leaving an offspring.

We will let the carrying capacities be i.i.d.\ Bernoulli random
variables, and consider the process $\eta $ in the stationary regime.
In this regime every living particle at generation~$0$,
say, has an infinite line of ancestors. The question of interest is the
distribution of the spatial location of distant-in-time ancestors.

One can of course interpret the discrete-time contact process as a
process describing spread of an infection and interpret the carrying
capacities as \emph{susceptible for the infection} or \emph{immune} if
the Bernoulli random variable at the corresponding site is $1$
respectively $0$. Then our question of interest is the spatial
location of the distant-in-time carriers of the infection from which
the infection propagated to a given individual.

By reversing the time direction, the problem has the following
equivalent description. We consider a simple directed random walk on
the ``backbone'' of the infinite cluster of the oriented percolation
on $\mathbb Z^d\times \mathbb Z$. The backbone of the cluster, we
denote it by $\mathcal C$, is a collection of all sites in $\mathbb
Z^d\times \mathbb Z$ that are connected to infinity by an open path.
The ``time-slices'' of the cluster $\mathcal{C}$ can be seen to be
equal in distribution to the time-reversal of the (non-trivial)
stationary discrete-time contact process $(\eta_n)_n$, and the
directed walk on $\mathcal{C}$ can be interpreted as the spatial
embedding of the ancestral lineage of one individual drawn from the
equilibrium population. The question posed in the previous paragraph
thus amounts to understanding the long time behaviour of this random
walk.

In this formulation, the model is of independent interest in the context of the
theory of random media: The directed random walk on an oriented percolation
cluster can be viewed as a random walk in a Markovian dynamical random
environment. The investigation of such random walks is an active research area
with a lot of recent progress. The random walk we consider however does not
satisfy the usual independence or mixing conditions that appear in the
literature; see Remark~\ref{rem:relationtoRWDREliterature} below. In fact, in
our case the evolution of the environment as a process in time is rather
complicated.

On the other hand, as a random walk on a random cluster, the model is very
natural. The investigation of random walks on percolation clusters is a very
active research area as well. An important difference to our model is that
usually, the walk can move in all (open) directions, whereas we consider a
\emph{directed} random walk.

We now give a precise definition of the model. Let $\omega\coloneqq \{\omega(x,n):
(x,n)\in \Z^d \times \Z\}$ be a family of independent Bernoulli random variables
(representing the carrying capacities) with parameter $p>0$ on some probability
space $(\Omega, \AA, \Pr)$. We call a site $(x,n)$ \emph{open} if
$\omega(x,n)=1$ and \emph{closed} if $\omega(x,n)=0$. We say that there is an
\emph{open path} from $(y,m)$ to $(x,n)$ for $m \le n$ if there is a sequence
$x_m,\dots, x_n$ such that $x_m=y$, $x_n=x$, $\norm{x_k-x_{k-1}} \le 1$ for
$k=m+1, \dots, n$ and $\omega(x_k,k)=1$ for all $k=m,\dots,n$. In this case we
write $(x,m) \to (y,n)$. Here $\norm{\cdot}$ denotes the $\sup$-norm.

Given a set $A \subset \Z^d$ we define the \emph{discrete time contact process}
$(\eta_n^A)_{n \ge m}$ starting at time $m \in \Z$ from the set $A$ as
\begin{align*}
  \eta_m^A (y) & =\indset{A}(y), \; y \in \Z^d, \\
  \intertext{and for $n \ge m$}
  \eta_{n+1}^A(x) & =
  \begin{cases}
    1 & \text{if $\omega(x,n+1)=1$ and $\eta_n^A(y)=1$ for some $y \in \Z^d$
      with $\norm{x-y} \le 1$}, \\
    0 & \text{otherwise}.
  \end{cases}
\end{align*}
In other words, $\eta_n^A(y)=1$ if and only if there is an open path from
$(x,m)$ to $(y,n)$ for some $x\in A$ (where we use in this definition the
convention that $\omega(x,m)=\indset{A}(x)$ while for $k>m$ the $\omega(x,k)$
are i.i.d.\ Bernoulli as above). Often we will identify the configuration
$\eta_n^A$ with the set $\{x \in \Z^d : \eta_n^A(x)=1\}$. Taking $m=0$, we set
\begin{align} \label{def:tauA}
  \tau^{A}\coloneqq  \inf\{n \ge 0: \eta_n^A=\emptyset\},
\end{align}
and in the case $A=\{\boldsymbol 0\}$ we write $\tau^{\boldsymbol 0}$.

It is well known \citep[see e.g.\/][Theorem~1]{GrimmetHiemer:02} that
there is a critical value $p_c \in (0,1)$ such that
$\Pr(\tau^{\boldsymbol 0} =\infty )=0$ for $p\le p_c$ and
$\Pr(\tau^{\mathbf 0} =\infty )>0$ for $p> p_c$. In the following
we consider only the supercritical case $p > p_c$. In this case the
law of $\eta_n^{\Z^d}$ converges weakly to the upper invariant measure
which is the unique non-trivial extremal invariant measure of the
discrete-time contact process. By taking $m \to -\infty$ while keeping
$A=\mathbb Z^d$ one obtains the stationary process
\begin{align}
  \label{eq:5}
  \eta\coloneqq(\eta_n)_{ n \in \Z} \coloneqq (\eta_n^{\Z^d})_{n \in \Z}.
\end{align}

We interpret the process $\eta$ as a population process, where $\eta_n(x)=1$
means that the position $x$ is occupied by an individual in generation $n$. We
are interested in the behaviour of the ``ancestral lines'' of individuals. Note
that because of the discrete time, there can in principle be several individuals
alive in the previous generation that could be ancestors of a given individual
at site $y$, namely all those at some $y'$ with $\norm{y'-y}\leq1$. In that
case, we stipulate that one of these potential ancestors is chosen uniformly at
random to be the actual ancestor, independently of everything else in the model.

Due to time stationarity, we can focus on ancestral lines of individuals
living at time $0$. It will be notationally convenient to time-reverse the
stationary process $\eta$ and consider the process $\xi\coloneqq (\xi_n)_{n \in
  \Z}$ defined by $\xi_n(x)=1$ if $(x,n) \to \infty$ (i.e.~there is an infinite
directed open path starting at $(x,n)$) and $\xi_n(x)=0$ otherwise. Note that
indeed $\mathcal{L}((\xi_n)_{n\in\Z})=\mathcal{L}((\eta_{-n})_{n\in\Z})$. We
will from now on consider the forwards evolution of $\xi$ as the ``positive''
time direction. \smallskip

On the event $B_0\coloneqq\{\xi_0(\mathbf{0})=1\}$ there is an infinite path
starting at $(\mathbf 0,0)$. We define the oriented cluster by
\begin{align*}
  \CC\coloneqq \{(x,n) \in \Z^d \times \Z: \xi_n(x)=1\}
\end{align*}
and let
\begin{equation}
  \label{Udef}
  U(x,n) \coloneqq \{(y,n+1): \norm{x-y} \le 1\}
\end{equation}
be the neighbourhood of the site $(x,n)$ in the next generation. On the
event $B_0$ we may define a $\mathbb Z^d$-valued random walk
$X \coloneqq (X_n)_{n \ge 0}$ starting from $X_0 =\mathbf 0$ with transition
probabilities
\begin{align}
  \label{eq:defXdynamics}
  \Pr( X_{n+1}=y|X_{n}=x, \xi) & =
  \begin{cases}
    {|U(x,n) \cap \CC|}^{-1}  & \text{when }(y,n+1) \in  U(x,n) \cap \CC,\\
    0& \text{otherwise.}
  \end{cases}
\end{align}

Note that $(X_n,n)_{n\ge 0}$ is a directed random walk on the
percolation cluster $\CC$, and $X$ can be also viewed as a random walk
in a (dynamical) random environment, where the environment is given by
the process $\xi$. As the environment $\xi$ is the time-reversal of
the stationary Markov process $\eta $, it is itself Markovian and
stationary, the invariant measure being the upper invariant measure of
the discrete-time contact process $\eta$. While the evolution of
$\eta$ is easy to describe forwards in time by local rules, $\eta$ is
not reversible, and the time evolution of its reversal $\xi$ seems
complicated. The transition probabilities for $\xi$ cannot be
described by local functions: For example, when viewed as a function
of $a =(a_x)_{x\in\Z^d} \in \{0,1\}^{\Z^d}$, $f(a) \coloneqq
\Pr(\xi_{n+1}(\boldsymbol{0})=1 | \xi_n=a)$, there will be no finite
set $K \subset \Z^d$ such that $f$ depends only on $(a_x)_{x\in K}$
(this can for example be seen by considering various $a$'s that have a
``sea of $0$'s around the origin''). Presently it is not at all clear
to us how to describe the forwards in time evolution of $\xi$ in a
more tangible way. Note however, that the process $\xi$ does form a
finite-range Markov field in the larger space $\Z^d \times \Z$ because
this is true for $\eta$, but it is unclear at the moment what use we
could make of that fact.

The complicated nature of $\xi $ disallows checking many of the usual conditions
that appear in the literature on random walks in dynamical random environment.
Some of such conditions (like e.g.\ ellipticity) are obviously violated by our
model. To our best knowledge, the random walk in a (dynamic) random environment
that we consider here is not contained in one of the classes studied in the
literature so far; see Remark~\ref{rem:relationtoRWDREliterature} below.
\smallskip

Our first result shows the law of large numbers, and a central limit theorem for
$X$ when averaging over both the randomness of the walk's steps and the
environment $\omega$. We write $P_\omega$ for the conditional law of $\Pr$,
given $\omega$, and $E_\omega $ for the corresponding expectation. With this
notation we have $\Pr( X_{n+1}=y|X_{n}=x, \xi) = P_\omega( X_{n+1}=y|X_{n}=x)$.

\begin{theorem}[LLN \& annealed CLT]
  \label{thm:qLLN-aCLT}
  For any $d \ge 1$ we have
  \begin{align}
    \label{eq:LLN}
    P_\omega\Big( \frac1n X_n \to \mathbf{0} \,\Big) = 1
    \quad \text{for $\Pr(\,\cdot\, | B_0)$-a.a.\ $\omega$},
  \end{align}
  and for any $f \in C_b(\R^d)$
  \begin{align}
    \label{eq:annealedCLT}
    \E \Big[ f\left( X_n/\sqrt{n}\, \right) \, \Big| \, B_0 \Big]
    \xrightarrow{n \to
      \infty}   \Phi(f),
  \end{align}
  where $\Phi(f)\coloneqq\int f(x)\, \Phi(dx)$ with $\Phi$ a non-trivial
  centred isotropic $d$-dimensional normal law.
\end{theorem}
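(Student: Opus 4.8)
The plan is to construct a sequence of regeneration times $0 = T_0 < T_1 < T_2 < \cdots$ for the pair $(X_n, \xi_{X_n + \cdot}(\cdot))$ at which the walk, in a suitable sense, ``forgets'' the past and starts afresh on an independent piece of the environment, so that the increments $(T_{k+1} - T_k, X_{T_{k+1}} - X_{T_k})_{k \ge 1}$ form an i.i.d.\ sequence after the first one. The natural definition: say $n$ is a regeneration time if the infinite open path witnessing $(X_n, n) \to \infty$ can be chosen to ``carry'' the future evolution, while the past of the walk and the environment it has explored depends only on a bounded space-time region that is disjoint from the region relevant to the future. Because oriented percolation has finite range and the backbone $\CC$ is built from local open/closed variables, one can localise: conditionally on a regeneration occurring, the post-$T_k$ environment restricted to the forward light cone of $(X_{T_k}, T_k)$ is distributed as a fresh supercritical cluster seeded at a single open site whose ancestral line survives, independent of everything before. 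The first step is therefore to make this decomposition precise and to show regeneration times exist and recur — here one uses supercriticality (positive density of the backbone, exponential estimates on the size of finite clusters from Grimmett--Hiemer) to guarantee that at each step there is a uniformly positive chance of a ``clean'' separation.

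The second step is the tail bound: one must show $\E[T_1] < \infty$ and $\E[|X_{T_1}|] < \infty$, and ideally $\E[(T_2 - T_1)^2] < \infty$ and $\E[|X_{T_2} - X_{T_1}|^2] < \infty$, since these second moments are exactly what the CLT requires. The standard device is to write the inter-regeneration time as geometrically dominated: each ``attempt'' at regeneration succeeds with probability bounded below, and conditionally on failure the extra time and displacement accumulated before the next attempt have exponential tails — this again reduces to the exponential decay of the diameter and size of the non-surviving clusters in supercritical oriented percolation, together with the fact that the walk moves at most one lattice step per unit time, so $|X_{n+m} - X_n| \le m$. Assembling these gives all moments of $(T_2 - T_1, X_{T_2} - X_{T_1})$, in particular the ones we need.

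The third step is routine once the first two are in hand: by symmetry of the model under $x \mapsto -x$ (the Bernoulli environment and the uniform choice of parent are symmetric), $\E[X_{T_2} - X_{T_1}] = \mathbf 0$, which yields the law of large numbers $X_n/n \to \mathbf 0$ via the standard decomposition $X_n = X_{T_{k(n)}} + (X_n - X_{T_{k(n)}})$ with $k(n)$ the number of regenerations up to time $n$, the renewal theorem controlling $k(n)/n \to 1/\E[T_2 - T_1]$, and the Borel--Cantelli-type bound on the overshoot. Note that because \eqref{eq:LLN} is stated $P_\omega$-almost surely for $\Pr(\cdot\mid B_0)$-a.a.\ $\omega$, one actually gets the quenched LLN essentially for free from the annealed one: $X_n/n \to \mathbf 0$ in $\Pr(\cdot\mid B_0)$-probability combined with monotonicity or a subsequence/interpolation argument (the increments are bounded by $1$) upgrades convergence in probability to almost sure convergence, and then an application of Fubini transfers it to a quenched statement. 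For \eqref{eq:annealedCLT}, one applies the classical CLT for sums of i.i.d.\ vectors to $X_{T_{k(n)}} = \sum_{j \le k(n)} (X_{T_j} - X_{T_{j-1}}) + O(1)$, uses $k(n) \approx n/\E[T_2-T_1]$, and checks that the overshoot $X_n - X_{T_{k(n)}}$ is $o(\sqrt n)$; the limiting covariance is $\frac{1}{\E[T_2 - T_1]}\,\cov(X_{T_2} - X_{T_1})$, which is a nonzero scalar multiple of the identity by the isotropy of the lattice $\Z^d$ under coordinate permutations and sign flips, giving the non-trivial centred isotropic Gaussian $\Phi$. Nondegeneracy requires knowing $\cov(X_{T_2} - X_{T_1}) \ne 0$, which follows since the walk is not deterministic between regenerations (there is positive probability that $U(x,n) \cap \CC$ contains at least two sites).

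**Main obstacle.** I expect the crux to be Step 1: giving a workable definition of regeneration that is simultaneously (a) a genuine stopping-time-like event measurable with respect to the explored past, (b) recurrent with uniformly positive conditional probability, and (c) strong enough that the future environment really is independent of the past with the right ``fresh supercritical cluster'' law. The difficulty is exactly the pathology flagged in the introduction — the time-reversed environment $\xi$ has no local description and no obvious mixing — so one cannot regenerate the environment by a short memory argument. The fix is to regenerate in the \emph{forward} (percolation) time direction, where the structure \emph{is} local: one should condition on a space-time ``cut'' where a single site is open, its forward cluster is infinite, and the backward exploration of the walk up to that time never used any site outside a cone that lies strictly below the cut. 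Making the independence clean — in particular handling the conditioning on $B_0$ and on survival of the relevant sub-cluster, which is where the ``first'' inter-regeneration increment ends up having a different (but still light-tailed) law than the rest — is the technical heart of the argument.
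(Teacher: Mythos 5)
Your strategy is exactly the one the paper follows (a Kuczek/Neuhauser-type regeneration structure, geometric domination of attempts via exponential tails of finite clusters, classical i.i.d.\ CLT along regenerations, isotropy from lattice symmetries), and your second and third steps match the paper's Lemma~\ref{lem:regenstructure} and the proof of Theorem~\ref{thm:qLLN-aCLT} essentially verbatim. The crux you flag in Step~1, however, is left open in your proposal, and it is worth recording how the paper resolves it, because the naive ``backward exploration inside a cone below the cut'' cannot even define the walk's past: each step of $X$ requires knowing which sites of $U(X_n,n)$ lie on $\CC$, i.e.\ unbounded future information. The paper's device is to enrich the probability space with i.i.d.\ uniform permutations $\widetilde\omega(x,n)$ of $U(x,n)$ and to build local approximating paths $\gamma_k$ that at each step move to the (permutation-first) neighbour maximising the truncated survival length $\ell_{k-j-2}$; these paths are $\mathcal G_0^k$-measurable, coincide with the true walk up to any time at which their endpoint is found to lie on $\CC$ (Lemma~\ref{l:gamma}), and regeneration is declared precisely when $\xi(\gamma_k(k))=1$. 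Two further points your sketch glosses over are handled there: the success probability of each check is bounded below by $\Pr(B_0)$ \emph{despite} the conditioning on $B_0$ and on the accumulated negative information (``$\xi=0$ at previously checked sites''), which the paper obtains from an FKG argument conditional on the explored $\sigma$-algebra $\mathcal G_0^{\sigma_k}$; and the factorisation across the cut $\{(z,n)\to\infty\}$ makes the \emph{entire} sequence $\bigl((Y_i,\tau_i)\bigr)_{i\ge1}$ i.i.d.\ under $\Pr(\cdot\,|\,B_0)$ (your ``first increment has a different law'' bookkeeping is not needed, though it would also work).

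One small correction to your Step~3: the remark that one upgrades convergence in $\Pr(\cdot\,|\,B_0)$-probability to almost sure convergence by ``monotonicity or a subsequence/interpolation argument'' is not a valid step as stated (bounded increments alone do not turn convergence in probability into a.s.\ convergence). It is also unnecessary: the SLLN along regeneration times together with the renewal theorem and the exponential tail of $\tau_1$ already gives $X_n/n\to\mathbf 0$ almost surely under the annealed law $\Pr(\cdot\,|\,B_0)$, and Fubini then yields precisely the quenched form \eqref{eq:LLN}. With that repaired, your outline agrees with the paper's proof; the paper additionally proves exponential (rather than merely second) moments for $(\tau_1,Y_1)$, which it later needs for the quenched CLT, and it verifies non-degeneracy by exhibiting explicit configurations giving $\Pr(Y_1=x,\tau_1=n\,|\,B_0)>0$, a slightly stronger statement than your observation that the walk is not deterministic.
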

We prove this theorem by exhibiting a regeneration structure for $X$ and
$\xi $, and then showing that the second moments of temporary and spatial
increments of $X$ at regeneration times are finite (in fact we will prove
existence of exponential moments).
\begin{remark}
\label{rem:sigma.p}
The covariance matrix of $\Phi$ in \eqref{eq:annealedCLT} is
  $\sigma^2$ times the $d$-dimensional identity matrix. It is clear from the
  construction below (see Subsection~\ref{subsect:regentimes}) that
\begin{equation}
  \sigma^2 = \sigma^2(p) = \frac{\E\big[ Y_{1,1}^2 \big]}{\E[\tau_1]} \in (0, \infty)
\end{equation}
where $\tau_1$ is the first regeneration time (see \eqref{deftauiYi})
of the random walk $X$ and $Y_{1,1}$ is the first coordinate of
$X_{\tau_1}$, the position of the random walk at this regeneration
time. A simple truncation and coupling argument shows that $p \mapsto
\sigma^2(p)$ is continuous on $(p_c,1]$; see
Remark~\ref{rem:sigma-cont}. The behaviour of $\sigma^2(p)$ as $p
\downarrow p_c$ appears to be an interesting problem that merits
further research.
\end{remark}

It is natural to study also two (or even more) walkers on the same cluster. On
the one hand, this allows to obtain information on the long-time behaviour in a
multi-type situation. \citet{Neuhauser:1992} and more recently
\citet{Valesin:2010} employed this for the (continuous-time) contact process. It
is also very natural from the modelling of ancestral lineages point of view,
where it corresponds to jointly describing the space-time embedding of the
ancestry of a sample of size two (or more) individuals when the walks start from
different sites. On the other hand, good control of the behaviour of two or more
``replicas'' of $X$ on the same cluster allows us to strengthen the annealed CLT
\eqref{eq:annealedCLT} to the quenched version.

\begin{theorem}[Quenched CLT]
  \label{thm:quenchedCLT}
  For any $d\ge 1$ and $f \in C_b(\R^d)$
  \begin{equation}
    \label{eq:quenchedCLT}
    E_\omega\Big[f\left(X_n/\sqrt{n} \, \right) \,\Big] \xrightarrow{n \to
      \infty} \Phi(f)  \quad    \text{ for  $\Pr(\,\cdot\, |
        B_0)$-a.a.~$\omega$,}
  \end{equation}
  where $\Phi$ is the same non-trivial centred isotropic
  $d$-dimensional normal law as in \eqref{eq:annealedCLT}.
\end{theorem}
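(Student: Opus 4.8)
The plan is to deduce the quenched CLT from the annealed one in Theorem~\ref{thm:qLLN-aCLT} by showing that, for $\Pr(\,\cdot\mid B_0)$-a.a.\ $\omega$, the conditional expectations $E_\omega[\,\cdot\,]$ concentrate on their annealed mean. Fix $t\in\R^d$ and set $\varphi_n(\omega,t)\coloneqq E_\omega\big[e^{\mathrm{i}\,t\cdot X_n/\sqrt n}\big]$ for $\omega\in B_0$. Applying \eqref{eq:annealedCLT} to $x\mapsto\cos(t\cdot x)$ and $x\mapsto\sin(t\cdot x)$ gives $\E[\varphi_n(\cdot,t)\mid B_0]\to\widehat\Phi(t)=e^{-\sigma^2|t|^2/2}$, so it suffices to show $\varphi_n(\cdot,t)\to\widehat\Phi(t)$ in $L^2(\Pr(\,\cdot\mid B_0))$ together with a polynomial rate $\E\big[\,|\varphi_n(\cdot,t)-\widehat\Phi(t)|^2\mid B_0\big]=O(n^{-c})$ for some $c>0$. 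Granting this, Borel--Cantelli along a subsequence $n_k=\lceil k^{3/2}\rceil$ yields $\varphi_{n_k}(\omega,t)\to\widehat\Phi(t)$ for a.a.\ $\omega$; the gaps are filled using that $X$ has deterministic increments of $\norm{\cdot}$-size at most $1$, so that for $n_k\le n<n_{k+1}$ one has $\norm{X_n/\sqrt n-X_{n_k}/\sqrt{n_k}}\le (n_{k+1}-n_k)/\sqrt{n_k}+\norm{X_{n_k}}\,(n_k^{-1/2}-n^{-1/2})$, which tends to $0$ in $P_\omega$-probability for a.a.\ $\omega$ (the first term because $3/2<2$, the second because $\norm{X_n}/\sqrt n$ is $L^1(P_\omega)$-bounded uniformly in $n$ for a.a.\ $\omega$, a standard consequence of the regeneration estimates). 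Running this over a countable dense set of $t$ and invoking the L\'evy continuity theorem — quenched tightness being part of the same second-moment bound — gives \eqref{eq:quenchedCLT} for all $f\in C_b(\R^d)$.

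The substance is therefore the identification of $\lim_n\E\big[\,|\varphi_n(\cdot,t)|^2\mid B_0\big]$ with $|\widehat\Phi(t)|^2=e^{-\sigma^2|t|^2}$. One uses the product representation $|\varphi_n(\omega,t)|^2=\varphi_n(\omega,t)\,\overline{\varphi_n(\omega,t)}=E_\omega^{\otimes2}\big[e^{\mathrm{i}\,t\cdot(X^{(1)}_n-X^{(2)}_n)/\sqrt n}\big]$, where $X^{(1)},X^{(2)}$ are, conditionally on $\omega$, two independent copies of the walk \eqref{eq:defXdynamics} started at $\mathbf{0}$; averaging over $\omega\in B_0$ gives $\E\big[\,|\varphi_n(\cdot,t)|^2\mid B_0\big]=\E^{\joint}\big[e^{\mathrm{i}\,t\cdot(X^{(1)}_n-X^{(2)}_n)/\sqrt n}\big]$, where $\E^{\joint}$ is the annealed law of the pair of walks \emph{on one and the same cluster} $\CC$. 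Thus everything reduces to a joint annealed CLT: under $\E^{\joint}$, $(X^{(1)}_n/\sqrt n,\,X^{(2)}_n/\sqrt n)\Rightarrow\Phi\otimes\Phi$, equivalently $(X^{(1)}_n-X^{(2)}_n)/\sqrt n\Rightarrow N(\mathbf 0,2\sigma^2 I_d)$, whose characteristic function is $e^{-\sigma^2|t|^2}=|\widehat\Phi(t)|^2$. In words, two replicas of the walk on the same cluster are asymptotically independent; the analogous statement for two walks on \emph{independent} clusters (annealed law $\E^{\indep}$) is immediate from Theorem~\ref{thm:qLLN-aCLT}, so the content is that $\E^{\joint}$ and $\E^{\indep}$ coincide in the diffusive limit.

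To establish the joint annealed CLT I would proceed in three steps. First, construct a \emph{joint regeneration structure} for $(X^{(1)},X^{(2)})$ — times $0<T_1<T_2<\cdots$ after which the pair evolves as a fresh copy, obtained, as for a single walk, by confining each walk to a space-time cone and replacing the event $\{(y,m)\to\infty\}$ by a cone-local surrogate of exponentially close probability — yielding i.i.d.\ increments $\big(X^{(1)}_{T_{k+1}}-X^{(1)}_{T_k},\,X^{(2)}_{T_{k+1}}-X^{(2)}_{T_k},\,T_{k+1}-T_k\big)$ with finite exponential moments, by the same estimates proving Theorem~\ref{thm:qLLN-aCLT}. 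Second, use this structure to prove a \emph{sublinearity} property of the difference walk: $\frac1n\#\{k\le n:\norm{X^{(1)}_k-X^{(2)}_k}\le R\}\to 0$ under $\E^{\joint}$ for every $R$ — in $d\ge2$ this is genuine transience of $X^{(1)}-X^{(2)}$, while in $d=1$ the difference is itself diffusive and the close-encounter set has size $O(\sqrt n)$ — equivalently, joint regenerations at which the two walks are widely separated with disjoint cones occur at positive asymptotic density along the block index. Third, couple the same-cluster pair with a pair of walks on \emph{independent} clusters so that the two pairs follow identical trajectories outside the close-encounter times: this is possible because, whenever the two walks of the same-cluster pair are far apart, the events ``connect to infinity'' governing their respective next steps each agree with a cone-local event depending on disjoint, hence independent, pieces of $\omega$; the discrepancies created at close encounters are reset at the following joint regeneration and, being confined to a set of times of size $o(n)$, amount to an error $o(\sqrt n)$ in $X^{(i)}_n$. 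Hence $\E^{\joint}[F(X^{(1)}_n/\sqrt n,X^{(2)}_n/\sqrt n)]-\E^{\indep}[F(X^{(1)}_n/\sqrt n,X^{(2)}_n/\sqrt n)]\to 0$ for bounded Lipschitz $F$; taking $F(x,y)=e^{\mathrm{i}\,t\cdot(x-y)}$ (together with the moment and rate bookkeeping needed above) gives $\E\big[\,|\varphi_n(\cdot,t)|^2\mid B_0\big]\to|\widehat\Phi(t)|^2$, completing the argument.

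\textbf{Main obstacle.} The crux — and the reason this must be handled with care — is that $\CC$ is a \emph{global} object (``connected to infinity''), so the portions of the environment seen near the two walks stay genuinely correlated however far apart the walks are; the cone-local surrogate neutralises this per regeneration step for one walk, but making it work \emph{simultaneously} for two walks, so that the step laws genuinely decouple when the walks are far apart, and controlling the errors one thereby accrues, is the delicate point. Closely tied to it is the need to show that joint regenerations with widely separated disjoint cones actually occur — with positive asymptotic density and exponential tails on the inter-regeneration times — which rests on the difference-walk sublinearity, itself nontrivial because the two directed walks are strongly coupled through the shared cluster and one lacks any tractable description of the environment's time evolution (cf.\ the discussion of $\xi$ above). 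Finally one must treat the degenerate initial regime where $X^{(1)}=X^{(2)}$ and the walks are perfectly coupled: here one shows that within a block of exponentially small length they separate to a large distance, using the annealed positive probability that $|U(x,n)\cap\CC|\ge2$ and that the two walks then choose different neighbours with probability bounded away from $0$.
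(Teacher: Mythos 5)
Your overall strategy coincides with the paper's: this is the Bolthausen--Sznitman second-moment method, reducing the quenched statement to a comparison of two walks on the same cluster with two walks on independent copies of the cluster, via a joint regeneration structure, a separation estimate, and a coupling that matches increments whenever the walks are far apart (the paper's Lemmas~\ref{lemma:jointrenewaltails}, \ref{lemma:couplingjointregen}, \ref{lemma:separation1}, \ref{l:hatcoupling} and \ref{lemma:couplerrord3lip}). For $d\ge 2$ your outline is essentially the paper's proof, up to two pieces of bookkeeping you treat as routine: (i) the $L^2$ rate $n^{-c}$ produced by the coupling is some small, unspecified $c>0$, so the subsequence $n_k=\lceil k^{3/2}\rceil$ need not make $\sum_k n_k^{-c}$ summable; you must take $n_k=k^{\beta}$ with $\beta>1/c$, and then the gaps $n_{k+1}-n_k\asymp n_k^{1-1/\beta}$ are no longer $o(\sqrt{n_k})$, so the deterministic ``bounded increments'' gap-filling fails and must be replaced by a maximal or moderate-deviation estimate along blocks (this is exactly the paper's Lemma~\ref{lemma:triumphofmoderatedeviations}); (ii) transferring from the walks observed along (joint) regeneration times, where the coupling naturally lives, back to $X_n$ at deterministic times requires the renewal-theoretic control carried out in the paper's proof of Theorem~\ref{thm:quenchedCLT}. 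Both are fixable by standard arguments.

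The genuine gap is $d=1$. In your Step 3 you argue that the discrepancies between the same-cluster pair and the independent-cluster pair are ``confined to a set of times of size $o(n)$'' and therefore ``amount to an error $o(\sqrt n)$''. That inference is false: an error accrued over $m$ time steps with bounded (or exponentially tailed) increments is of order $m$ in the worst case, and in $d=1$ the close-encounter set has size of order $\sqrt n$ (as you note yourself), so the accrued error is of order $\sqrt n$ --- exactly the diffusive scale, hence not negligible after dividing by $\sqrt n$. There is no a priori reason the increments accrued during close encounters are conditionally centred, so one cannot even argue a $n^{1/4}$-type fluctuation bound without further input. This is precisely the obstruction the paper flags at the start of Section~\ref{sec:cased1}, and its $d=1$ proof replaces the naive coupling bound by a martingale decomposition of $(\widehat X,\widehat X')$ under $\widehat\Psi^{\joint}$, a quantitative martingale CLT of Rackauskas, the bound $\E[R_n^{3/2}]\le c\,n^{1+\delta_R}$ with $\delta_R<1/2$ on the time spent within distance $K\log n$, and --- the key ingredient your proposal has no substitute for --- a cancellation argument for the drift terms: $\E\big[|A_n^{(1)}|/\sqrt n\big]\le c\,n^{-\delta_C}$, proved by noting that the black-box contributions carry signs determined by the crossing type $W_{n,i}$, that opposite types are equally likely by symmetry, that the type chain mixes uniformly in $n$ (the separation and overcrossing Lemma~\ref{lem:seplemd=1}), and by a mixingale maximal inequality. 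Without an argument of this kind your proof establishes \eqref{eq:quenchedCLT} only for $d\ge 2$.
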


Let us conclude the introduction by mentioning some generalisations of
the random walk that we consider here.

\begin{remark}[More general neighbourhoods]
  \label{rem:gen-neighb}
  For simplicity, we defined $U(x,n)$ as in \eqref{Udef}, but the
  proofs go through for any finite, symmetric neighbourhood (by
  ``symmetric'' we mean that $y \in U(x,n)$ if and only if $-y \in
  U(x,n)$). In this case the resulting law $\Phi$ will in general not
  be isotropic, see the end of the proof of Theorem
  \ref{thm:qLLN-aCLT}.

  Note also, that for sake of clarity, all figures in this paper are
  drawn with $U(x,n)$ set to $\{(x+1,n+1),(x-1,n+1)\}$ for $d=1$.
\end{remark}

\begin{remark}[Functional central limit theorem]
  \label{rem:inv-principle}
  For the random walk $X$ one can also deduce corresponding annealed
  and quenched functional limit theorems; see also
  Remark~\ref{rem:inv-principle-2}.
\end{remark}

\begin{remark}[Contact process with fluctuating population size]
  \label{rem:gen-cp}
  Let $K(x,n)$, $(x,n) \in \Z^d \times \Z$ be i.i.d.\ $\N=\{1,2, \dots\}$-valued
  random variables and let us define the \emph{discrete time contact process
    with fluctuating population size}, $\widehat \eta \coloneqq(\widehat
  \eta_n)_{n \in \Z}$, by
  \begin{align}
    \label{eq:8}
    \widehat \eta_n(x) \coloneqq \eta_n(x) K(x,n),
  \end{align}
  and its reversal $\widehat \xi \coloneqq ( \widehat \xi_n)_{n \in \Z}$ by
  \begin{align}
    \label{eq:10}
    \widehat \xi_n(x) \coloneqq \xi_n(x) K(x,n).
  \end{align}
  One can interpret $K(x,n)$ as a random ``carrying capacity'' of the site
  $(x,n)$. Now conditioned on $\widehat\xi_0(\mathbf{0}) \ge 1$ the ancestral random
  walk, we call it $X$ as before, can be defined by $X_0=\boldsymbol 0$ and
  \begin{equation}
    \label{eq:11}
    \Pr\bigl( X_{n+1}=y|X_{n}=x, \widehat \xi\, \bigr) =
    \begin{cases}
      \dfrac{\widehat \xi_{n+1}(y)} {\sum_{(y',n+1) \in U(x,n)}
        \widehat\xi_{n+1}(y')} \,
      \quad &\text{if }(y,n+1) \in  U(x,n),\\[4mm]
      0&\text{otherwise.}
    \end{cases}
  \end{equation}
  For such random walks in random environments our arguments can also be adapted
  and the same results as above can be obtained; see also
  Remark~\ref{rem:alt-constr-fluc-cp}.
\end{remark}

\begin{remark}[Relation to other approaches to RWDRE in the literature]
\label{rem:relationtoRWDREliterature}
Random walks in dynamic random environments (RWDRE) are currently an active area
of research (they can of course by explicitly including the ``time'' coordinate
in principle be expressed in the context of random walk in random (non-dynamic)
environments, but the often more complicated structure of the law of the
environment does make them somewhat special inside this general class). To the
best of our knowledge, the walk \eqref{eq:defXdynamics} and our results in
Theorems~\ref{thm:qLLN-aCLT} and \ref{thm:quenchedCLT} are not covered by
approaches in the literature. Here is a list of corresponding
results so far together with a very brief discussion:
\begin{itemize}
\item \citet{DolgopyatKellerLiverani.AOP36} obtain a CLT under ``abstract''
  conditions on the environment process, that appear very hard to verify
  explicitly for $\xi$, in particular the absolute continuity condition for
  $\xi$ viewed relative to the walk w.r.t.\ the a priori law on $\xi$.
\item \citet{MR2748407} consider environments that are ``refreshed in each
  step'' (i.e.\ time-slices are i.i.d.), this does not apply to $(\xi_n)$.
\item Individual coordinates $(\xi_n(x))_{n\in\N}$ with
$x \in \Z^d$ fixed are not (independent) Markov chains,
in contrast to the set-up in \citet{MR2507753}.
\item $(\xi_n)$ does not fulfil the required uniform coupling conditions
  employed in \citet{RedigVoellering.arxiv1106.4181}.
\item $(\xi_n)$ does not fulfil the cone mixing condition considered
  in \citet{MR2786643}.
  Intuitively, this stems from the fact that the supercritical
  contact process has two extremal invariant distributions, the
  upper invariant measure (which we consider here) and the trivial measure
  (which concentrates on $\xi \equiv 0$). Thus, at an arbitrarily large
  level $n$ an atypically high density of zeros around the origin
  can be achieved by conditioning on large enough islands of zeros below at
  level $0$, an event with positive probability.
\item \citet{denHollanderdosSantosSidoravicius2012} weaken the
  cone-mixing condition from \citet{MR2786643} to a conditional
  cone-mixing condition and obtain a LLN for a class of
  (continuous-time) random walks in dynamic random environments with
  this (and some further technical) assumptions. Further research is
  required to investigate whether a similar condition can be
  established for $\xi$ but note that presently, the approach in
  \citet{denHollanderdosSantosSidoravicius2012} does not yield a CLT.
\end{itemize}
\end{remark}

The rest of the paper is organised as follows. In Section~\ref{sec:reg-struct},
we define the regeneration structure for a single walk. To define the
regeneration times, we first give an alternative construction of the walk, using
some ``external randomness''. Theorem~\ref{thm:qLLN-aCLT} then follows by
standard arguments, once we show that the regeneration times have finite
exponential moments, see Lemma~\ref{lem:regenstructure}. In
Section~\ref{sec:joint-dynamics}, we define a joint regeneration structure for
two walks on the same cluster, and we compare their joint distribution with the
distribution of two walks in two independent copies of the environment, which
allows us to prove Theorem~\ref{thm:quenchedCLT}. Finally, in
Appendix~\ref{sect:auxproofs} we prove a ``folklore'' result (that we could not
find in this form in the literature), namely that the height of a finite cluster
in the oriented percolation has, for all supercritical parameters, an
exponential tail.

\section{Regeneration structure for a single random walk}
\label{sec:reg-struct}

In this section we describe and study a regeneration structure of the random
walk $X$ conditioned on the event $B_0$. We adapt arguments from
\citet{Kuczek:1989} and \citet{Neuhauser:1992} and show that the regeneration
times have some exponential moments and consequently finite second moments. From
that Theorem~\ref{thm:qLLN-aCLT} follows by standard arguments. The
corresponding proof is given after Lemma~\ref{lem:regenstructure}.

The construction of the regeneration structure is lengthy but not difficult. Its
goal is to build a trajectory of $X$ using rules that are ``local'', i.e.~which
use only local $\omega $'s (and some additional local randomness), but not $\xi
$'s, as to know $\xi $'s we need to know the ``whole future'' of the environment
$\omega $. Of course, this is not possible in general, but the regeneration
times which we will construct are exactly the times when the locally constructed
trajectory coincides with the trajectory of $X$. We note that a similar but
non-randomised construction was used in \citet{SarkarSun2013} to
analyse the collection of rightmost paths in a directed percolation cluster.

\subsection{Local construction of random walk on $\mathcal C$}
\label{ss:localconst}

We will need some additional randomness for the construction: For every
$(x,n) \in \Z^d \times \Z$ let $\widetilde\omega{(x,n)}$
be a uniformly chosen permutation of $U(x,n)$, independently distributed for all
$(x,n)$'s, defined also on the probability space $(\Omega ,\mathcal A, \mathbb
P)$. We denote the whole family of these permutations by $\widetilde\omega$.

For every $(x,n)\in \mathbb Z^d\times \mathbb Z$ let
$\ell(x,n)=\ell_\infty(x,n)$ be the length of the longest directed
open path starting at $(x,n)$; we set $\ell(x,n)=-1$ when $(x,n)$ is
closed. (Recall that a path $(x_0,n), (x_1,n+1), \dots, (x_k,n+k)$ of
length $k$ with $\norm{x_i-x_{i-1}} \leq 1$ is open if
$\omega(x_0,n)=\omega(x_1,n+1)=\cdots= \omega(x_k,n+k)=1$.) For every
$k\in \{0,1,\dots\}$ let $\ell_k(x,n)\coloneqq\ell(x,n)\wedge k$ be
the length of the longest directed open path of length at most $k$
starting from $(x,n)$. Observe that $\ell_k(x,n)$ is measurable with
respect to the $\sigma $-algebra $\mathcal G_{n}^{n+k+1}$, where
\begin{align}
  \label{eq:defGnnk}
  \mathcal{G}_{n}^{m} \coloneqq
  \sigma\big(\omega(y,i), \widetilde\omega{(y,i)}  :
    y \in \Z^d, n \leq i < m \big), \quad n < m.
\end{align}
For $k\in\{0,\dots,\infty\}$, we define $M_k(x,n)\subseteq U(x,n)$ to be the set of
sites which maximise $\ell_k$ over $U(x,n)$, i.e.\
\begin{equation}
  M_k(x,n) \coloneqq \Big\{y\in U(x,n):\ell_k(y)=\max_{z\in U(x,n)}
  \ell_k(z)\Big\},
\end{equation}
and for convenience we set $M_{-1}(x,n)=U(x,n)$.
Observe that we have
\begin{align}
  \label{e:M0}
  M_0(x,n)&=\{y\in U(x,n): y\text{ is open}\},\\
  \label{e:Minfty}
  M_\infty(x,n)&= U(x,n)\cap \mathcal C, \\
  \label{e:Mincl}
  M_k(x,n)&\supseteq M_{k+1}(x,n), \qquad k\ge -1.
\end{align}
Let $m_k(x,n)\in M_k(x,n)$ be the element of $M_k(x,n)$ that appears as the
first in the permutation $\widetilde\omega{(x,n)}$.

Given $(x,n)$, $k$, $\omega $ and $\widetilde\omega$, we define a path
$\gamma_k = \gamma_k^{(x,n)}$ of length $k$ via
\begin{equation}
  \begin{split}
    \label{e:gamma}
    \gamma_k(0) &= (x,n),\\
    \gamma_k(j+1) &= m_{k-j-2}(\gamma_k(j)),\qquad j=0,\dots,k-1.
  \end{split}
\end{equation}
In words, at every step, $\gamma_k$ checks the neighbours of its
present position and picks randomly (using the random permutation
$\widetilde \omega $) one of those where it can go further on open
sites, but inspecting only the state of sites in the time-layers
$\{n,\dots,n+k-1\}$. Consequently, the construction of
$\gamma_k^{(x,n)}$ is measurable with respect to the $\sigma $-algebra
$\mathcal G_n^{n+k}$ (recall \eqref{eq:defGnnk}). See
Figure~\ref{fig:gammapaths} for an illustration. \smallskip

\begin{figure}
 \centering
 \begin{tikzpicture}[xscale=0.65,yscale=0.65]
   \draw (0,-0.5) node {\footnotesize $(x,n)$};
   \draw (2.1,0.65) node {\footnotesize $\widetilde \omega_{(x,n)}(1)$};
   \draw (-2.05,0.65) node {\footnotesize $\widetilde \omega_{(x,n)}(2)$};
   \filldraw[fill=black] (0,0) circle (3pt)
                         (-1,1) circle (3pt)
                         (1,1) circle (3pt)
                         (2,2) circle (3pt)
                         (-2,2) circle (3pt)
                         (-3,3) circle (3pt)
                         (-1,3) circle (3pt);
   \filldraw[fill=white] (0,2) circle (3pt)
                         (3,3) circle (3pt)
                         (1,3) circle (3pt);

   \draw[-stealth,thick] (0.1,0.1) -- (0.9,0.9);
   \draw[-stealth,thick] (-0.9,1.1) -- (-0.1,1.9);
   \draw[-stealth,thick] (0.9,1.1) -- (0.1,1.9);
   \draw[-stealth,thick] (-2.1,2.1) -- (-2.9,2.9);
   \draw[-stealth,thick] (0.1,2.1) -- (0.9,2.9);
   \draw[-stealth,thick] (2.1,2.1) -- (2.9,2.9);

   \draw[-stealth,thick] (3.1,3.1) -- (3.9,3.9);
   \draw[-stealth,thick] (-2.9,3.1) -- (-2.1,3.9);
   \draw[-stealth,thick] (-0.9,3.1) -- (-0.1,3.9);
   \draw[-stealth,thick] (0.9,3.1) -- (0.1,3.9);

   \draw[-stealth,thick,dotted] (-0.1,0.1) -- (-0.9,0.9);
   \draw[-stealth,thick,dotted] (-1.1,1.1) -- (-1.9,1.9);
   \draw[-stealth,thick,dotted] (-1.9,2.1) -- (-1.1,2.9);
   \draw[-stealth,thick,dotted] (-0.1,2.1) -- (-0.9,2.9);
   \draw[-stealth,thick,dotted] (1.1,1.1) -- (1.9,1.9);
   \draw[-stealth,thick,dotted] (1.9,2.1) -- (1.1,2.9);
   \draw[-stealth,thick,dotted] (-3.1,3.1) -- (-3.9,3.9);
   \draw[-stealth,thick,dotted] (-1.1,3.1) -- (-1.9,3.9);
   \draw[-stealth,thick,dotted] (1.1,3.1) -- (1.9,3.9);
   \draw[-stealth,thick,dotted] (2.9,3.1) -- (2.1,3.9);

   \draw[thick] (5,0) -- (6,1);
   \filldraw[fill=black] (5,0) circle (1.5pt) (6,1) circle (1.5pt);
   \draw (5.5,-0.4) node {\footnotesize $k=1$};

   \draw[thick] (8,0) -- (9,1) -- (8,2);
   \filldraw[fill=black] (8,0) circle (1.5pt) (9,1) circle (1.5pt) (8,2) circle (1.5pt);
   \draw (8.5,-0.4) node {\footnotesize $k=2$};

   \draw[thick] (10,0) -- (11,1) -- (12,2) -- (13,3) ;
   \filldraw[fill=black] (10,0) circle (1.5pt) (11,1) circle (1.5pt) (12,2) circle
   (1.5pt) (13,3) circle (1.5pt);
   \draw (11.5,-0.4) node {\footnotesize $k=3$};

   \draw[thick] (17,0) -- (16,1) -- (15,2) -- (14,3) -- (15,4) ;
   \filldraw[fill=black] (17,0) circle (1.5pt) (16,1) circle (1.5pt) (15,2) circle
   (1.5pt) (14,3) circle (1.5pt) (15,4) circle (1.5pt);
   \draw (15,-0.4) node {\footnotesize $k=4$};

 \end{tikzpicture}
 \caption{The paths $\gamma_k^{(x,n)}$ from \eqref{e:gamma} based on
   $\omega$'s and $\widetilde\omega$'s. Black and white circles represent
   open sites, i.e.\ $\omega(\text{site})=1$, and closed sites, i.e.\
   $\omega(\text{site})=0$, respectively. Solid arrows from a site point
   to $\widetilde\omega_{(\text{site})}(1)$ and dotted to
   $\widetilde\omega_{(\text{site})}(2)$. On the right the sequence of
   paths $\gamma_k^{(x,n)}(\cdot)$ for $k=1,2,3,4$ is shown. For sake of
   clarity, we used $U(x,n)=\{(x+1,n+1),(x-1,n+1)\}$,
   cf.~Remark~\ref{rem:gen-neighb}.}
 \label{fig:gammapaths}
\end{figure}
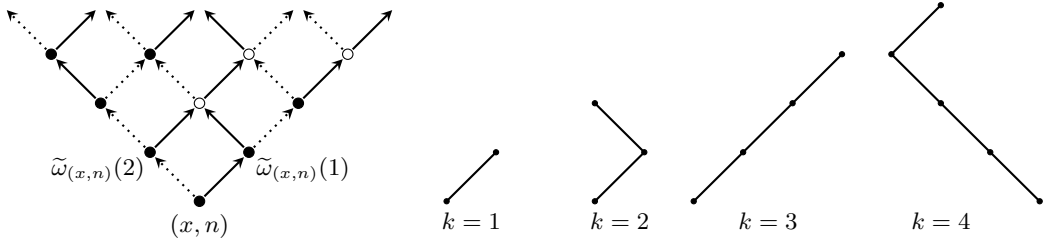

The paths $\gamma_k^{(x,n)}$ have the following properties.
\begin{lemma}
  \label{l:gamma}
  Assume that $\omega $ and $(x,n)\in \mathcal C$ are fixed.
  \begin{itemize}
    \item[(a)] The law of
    $(\gamma_\infty^{(x,n)}(j))_{j\ge 0}$ is the same as the law of the random
    walk $(X_j,n+j)_{j\ge 0}$ on $\mathcal C$ started from $(x,n)$.
  \end{itemize}
  If in addition $\widetilde\omega$ is fixed, then
  \begin{itemize}
    \item[(b)] $\omega(\gamma_k(m))=1$ for all $0\le m < k$.
    \item[(c)] (stability in $k$)
    If the end point of $\gamma_k$ is open,
    i.e.~$\omega (\gamma_k(k))=1$, then the path
    $\gamma_{k+1}$ restricted to the first $k$ steps equals
    $\gamma_k$.

    \item[(d)] (fixation on $\mathcal C$)
    Assume that $\gamma_k(j)\in \mathcal C$ for some $k\ge 0$, $j\le k$.
    Then, $\gamma_m(j)=\gamma_k(j)$ for all $m>k$.

    \item[(e)] (exploration of finite branches) If
    $\gamma_k(k-1)\in \mathcal C $ and $\gamma_k(k)\notin\mathcal  C$ for
    some $k$, then $\gamma_j(k)=\gamma_k(k)$ for all
    $k\le j \le k+\ell(\gamma_k(k))+1$ and
    $\gamma_{k+\ell(\gamma_k(k))+2}(k)\neq \gamma_k(k)$.
  \end{itemize}
\end{lemma}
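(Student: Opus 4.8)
The plan is to reduce all five assertions to two elementary observations about the local maximisers $M_k$ and $m_k$, plus a pair of monotonicity estimates on the paths $\gamma_k$. The first observation, which I will call \emph{monotone consistency}, is that since $M_{k+1}(v)\subseteq M_k(v)$ for $k\ge-1$ by \eqref{e:Mincl}, the $\widetilde\omega_v$-first element $m_k(v)$ of $M_k(v)$ satisfies, for every $k'\ge k$ (including $k'=\infty$): if $m_k(v)\in M_{k'}(v)$ then already $m_{k'}(v)=m_k(v)$ — an element that comes first in a set and still belongs to a subset also comes first in that subset. The second is that the $\pi$-first element of a nonempty $T\subseteq S$ is uniformly distributed on $T$ when $\pi$ is a uniform random permutation of the finite set $S$, and that these choices are independent across the i.i.d.\ family $\widetilde\omega$. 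Granting these, part~(a) is immediate: started from $(x,n)\in\CC$ the path $\gamma_\infty$ stays in $\CC$, because $M_\infty(\gamma_\infty(j))=U(\gamma_\infty(j))\cap\CC$ is nonempty by \eqref{e:Minfty} whenever $\gamma_\infty(j)\in\CC$; the time coordinate of $\gamma_\infty(j)$ equals $n+j$, so the visited space--time sites are pairwise distinct and each step consumes a fresh independent permutation, whence by the second observation the conditional law of $\gamma_\infty(j+1)$ given $\omega$ and the past is uniform on $U(\gamma_\infty(j))\cap\CC$, which is exactly the transition rule \eqref{eq:defXdynamics} of $(X_j,n+j)$.

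For (b) and (c) I would first prove the estimate $\ell(\gamma_k(m))\ge k-m-1$ for $0\le m\le k$ by induction on $m$: it holds at $m=0$ since $\ell(x,n)=\infty$, and at an interior step where the bound is $\ge1$ one has $\ell(\gamma_k(m))=1+\max_{z\in U(\gamma_k(m))}\ell(z)$, so some neighbour of $\gamma_k(m)$ has $\ell\ge k-m-2\ge0$; thus $k-m-2$ is the maximal value of $\ell_{k-m-2}$ on $U(\gamma_k(m))$, and $\gamma_k(m+1)=m_{k-m-2}(\gamma_k(m))\in M_{k-m-2}(\gamma_k(m))$ has $\ell\ge k-m-2$. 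Statement~(b) is the case $\ell\ge0$. For~(c) I would sharpen this to $\ell(\gamma_k(m))\ge k-m$ for $0\le m\le k$ \emph{under the hypothesis} $\omega(\gamma_k(k))=1$: if $\ell(\gamma_k(m_0))=k-m_0-1$ held for some $m_0$, this equality would propagate forward along $\gamma_k$ (each interior step lowering $\ell$ by exactly one, by the same recursion $\ell(v)=1+\max_{z\in U(v)}\ell(z)$), giving $\ell(\gamma_k(k-1))=0$, i.e.\ all of $U(\gamma_k(k-1))$ closed, contradicting that $\gamma_k(k)=m_{-1}(\gamma_k(k-1))$ is open. Given $\ell(\gamma_k(i+1))\ge k-(i+1)$, the site $\gamma_k(i+1)=m_{k-i-2}(\gamma_k(i))$ realises the maximal value of $\ell_{k-i-1}$ on $U(\gamma_k(i))$ and hence lies in $M_{k-i-1}(\gamma_k(i))$; monotone consistency then yields $m_{k-i-1}(\gamma_k(i))=m_{k-i-2}(\gamma_k(i))$, i.e.\ $\gamma_{k+1}(i+1)=\gamma_k(i+1)$ whenever $\gamma_{k+1}(i)=\gamma_k(i)$, and (c) follows by induction on $i$.

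For (d) I would first record that $\gamma_k(j)\in\CC$ with $j\le k$ forces $\gamma_k(i)\in\CC$ for all $i\le j$: by (b) the site $\gamma_k(i-1)$ is open, and prepending it to an infinite open path out of $\gamma_k(i)$ keeps it infinite. Since a site of $\CC$ realises the maximal value of $\ell_i$ on any neighbourhood for \emph{every} finite $i$, the same monotone-consistency step as in (c) — now comparing $\gamma_{k+1}(i+1)=m_{k-i-1}(\gamma_k(i))$ with $\gamma_k(i+1)=m_{k-i-2}(\gamma_k(i))$ and using $\gamma_k(i+1)\in\CC$ for $i+1\le j$ — shows $\gamma_{k+1}$ agrees with $\gamma_k$ on $\{0,\dots,j\}$; in particular $\gamma_{k+1}(j)\in\CC$, and iterating with $k$ replaced successively by $k+1,k+2,\dots$ gives $\gamma_m(j)=\gamma_k(j)$ for all $m>k$. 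For (e), applying (d) at index $k-1$ fixes $\gamma_m(k-1)=v:=\gamma_k(k-1)\in\CC$ for all $m\ge k$, so $\gamma_m(k)=m_{m-k-1}(v)$; because $v\in\CC$ one has $M_i(v)=\{z\in U(v):\ell(z)\ge i\}$ for $i\ge0$ and $M_{-1}(v)=U(v)$, and $w:=\gamma_k(k)=m_{-1}(v)$ is simply the $\widetilde\omega_v$-first element of $U(v)$. Hence $w\in M_{m-k-1}(v)$ exactly when $m-k-1\le\ell(w)$, i.e.\ $m\le k+\ell(w)+1$, and then $\gamma_m(k)=w$; while $M_{\ell(w)+1}(v)$ is nonempty (it contains the first step of an infinite open path from $v$, as $v\in\CC$) but does not contain $w$, so $\gamma_{k+\ell(w)+2}(k)\neq w$.

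The step I expect to cost the most care is the $\ell$-bookkeeping along $\gamma_k$, and in particular the realisation that the hypothesis $\omega(\gamma_k(k))=1$ in~(c) is exactly what promotes the generic bound $\ell(\gamma_k(m))\ge k-m-1$ to the sharp $\ell(\gamma_k(m))\ge k-m$ needed to match the ``inspection budget'' $M_{k-i-1}$ that $\gamma_{k+1}$ uses; one also has to keep the shifted indices $k-j-2$ versus $k-j-1$ in \eqref{e:gamma} straight, including the degenerate value $-1$ where $M_{-1}=U$. Once these estimates and the two elementary observations are in place, the inductions themselves are routine.
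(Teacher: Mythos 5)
Your proposal is correct and follows essentially the same route as the paper: all five parts rest on the inclusion \eqref{e:Mincl}, the identity \eqref{e:Minfty}, and the consistency of the $\widetilde\omega$-first element across nested sets $M_k$, exactly as in the paper's proof. The only difference is that you spell out the $\ell$-bookkeeping (the bounds $\ell(\gamma_k(m))\ge k-m-1$, respectively $\ge k-m$ under the hypothesis of (c)) and the inductions along the path that the paper leaves implicit, e.g.\ its unexplained claim that $m_{k-j-2}(\gamma_k(j))\in M_{k-j-1}(\gamma_k(j))$ and the use of (d) at index $k-1$ inside (e).
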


\begin{proof}
  Claim (a) follows directly from \eqref{e:Minfty}, the fact that
  $m_\infty(\cdot)$ is a uniformly chosen element of
  $M_\infty(\cdot)$, and the definition of the path $\gamma_\infty$.
  For claim (b), it is sufficient to observe that when $(x,n)\in
  \mathcal C$, there is an open path of length $k-1$ starting at
  $(x,n)$ which $\gamma_{k}$ will follow. For (c), if $\gamma_k(k)$ is
  open, then $m_{k-j-2}(\gamma_k(j)) \in M_{k-j-1}(\gamma_k(j))$ for
  every $0\le j < k$, and thus, using the inclusion \eqref{e:Mincl},
  $m_{k-j-1}(\gamma_k(j))=m_{k-j-2}(\gamma_k(j))$, for $0\le j<k$. For
  (d), if $\gamma_k(j)$ is on $\mathcal C$, then $\gamma_k(j)\in
  M_m(\gamma_k(j-1))$ for every $m>k$ by \eqref{e:Minfty}, and thus
  $\gamma_k(j)=\gamma_m(j)$. Finally, (e) follows by observing that
  when $\gamma_k(k) = m_{-1}(\gamma_k(k-1))\notin \mathcal C$, that is
  $\ell(\gamma_k(k))<\infty$, then $\gamma_k(k)=m_j(\gamma_k(k-1))\in
  M_j(\gamma_k(k-1))$ for all $0\le j \le \ell(\gamma_k(k))$ but
  $\gamma_k(k)\notin M_{\ell(\gamma_k(k))+1}(\gamma_k(k-1))$.
\end{proof}

\begin{remark}
  \label{r:coupling}
  Lemma~\ref{l:gamma}(a) allows to couple the random variables
  $\omega ,\widetilde\omega$ with the random walk $(X_k,k)$ started from
  $(\boldsymbol 0,0)$ by setting
  \begin{equation}
    \label{eq:walkpathcoupled}
    (X_k,k)= \gamma_\infty^{(\boldsymbol 0,0)}(k)
    =\lim_{j\to\infty} \gamma_j^{(\boldsymbol 0,0)}(k).
  \end{equation}
  (Note that the limit on the right-hand side exists by Lemma~\ref{l:gamma}(d).)
  From now on, we will assume that this coupling is in place.
\end{remark}

\begin{remark}
  1.\ This construction can \emph{a priori} be used to
  extend the definition of the random walk $X$ for starting points
  that are not on the infinite cluster $\mathcal C$. It is sufficient
  to use similar arguments as in the previous lemma to show that for
  every $(x,n)$, $\omega $, and~$\widetilde\omega$,
  $(X_k,n+k)=\lim_{j\to\infty} \gamma^{(x,n)}_j(k)$ exists a.s.~for
  every $k$ and is a directed path, which remains on $\mathcal C $
  once it hits it. Actually, in this way we obtain a coalescing flow
  on $\mathbb Z^d\times\mathbb Z$. \\[0.5ex]
  2.\ In analogy with the construction in
  \citet[Section~2]{Neuhauser:1992} we can think of the path
  $\gamma_k$ defined in \eqref{e:gamma} as leading to $\gamma_k(k)$,
  the \emph{first} potential ancestor $k$ generations ago of the
  particle at $(x,n)$. The construction of $\gamma_k$ can easily be
  extended to a random ordering of all paths of length $k$, thus
  yielding an ordered sequence of all potential
  ancestors. \\[0.5ex]
  We will not need these extensions in the present paper.
\end{remark}

\begin{remark}[The construction in the case of fluctuating local
    population size]%
  \label{rem:alt-constr-fluc-cp}%
  \hfill
  In the case of fluctuating local population size as in
  Remark~\ref{rem:gen-cp}, the same construction can be performed. To this
  end it is only necessary to replace the uniform random permutation
  $\widetilde\omega{(x,n)}$ by a ``weighted'' random
  permutation with distribution
  \begin{align}
    \label{eq:15}
    \Pr\bigl(\widetilde\omega{(x,n)} =(y_{1}, \dots, y_{|U(x,n)|}  ) | K  \bigr) =
    \frac{1}{Z(x,n)} \prod_{\ell =1}^{|U(x,n)|} K(y_{\ell}),
  \end{align}
  where $(y_{1}, \dots, y_{|U(x,n)|})$ run over all permutations of $U(x,n)$
  and $Z (x,n)$ is the normalisation factor.
\end{remark}

\subsection{Regeneration times}
\label{subsect:regentimes}

We can now introduce the regeneration times which will be used to show
all main results of the present paper. We consider the random walk
$(X_n,n)$ started at $(\boldsymbol 0,0)$ as defined in
\eqref{eq:walkpathcoupled} and write $\gamma_k$ for
$\gamma_k^{(\boldsymbol 0,0)}$. We define a sequence $T_j$,
$j\ge 0$, by
\begin{align}
  \label{eq:reg.1}
  T_0\coloneqq 0 \quad \text{and}\quad
  T_{j}\coloneqq\inf\left\{k> T_{j-1}: \xi(\gamma_k(k))=1\right\},\quad j\ge 1.
\end{align}
(Here and later we use the notation $\xi (y) \coloneqq \xi_n(x)$ when
  $y=(x,n)\in\mathbb Z^d\times \mathbb Z$.)

At times $T_j$ the local construction of the path finds a ``real ancestor''
of $(\boldsymbol 0,0)$ in the sense that for any $m>T_j$,
$\gamma_m(T_j)=\gamma_{T_j}(T_j)$, by
Lemma~\ref{l:gamma}(d). Therefore, the local construction ``discovers the
trajectory of $X$ up to time $T_j$''. More precisely we know that
$(X_m,m)=\gamma_{T_j}(m)$
for all $0\le m\le T_j$, cf.~Lemma~\ref{l:gamma}(a) and
Remark~\ref{r:coupling}.

For $i=1,2,\dots$ we set
\begin{align}
\label{deftauiYi}
\tau_i\coloneqq T_i-T_{i-1} \qquad \text{and} \qquad
Y_i\coloneqq X_{T_i}-X_{T_{i-1}}.
\end{align}
The strong law of large numbers as well as the
(averaged) central limit theorem are consequences of the following lemma.

\begin{lemma}[Independence and exponential tails for regeneration increments]
  \label{lem:regenstructure}
  \hfill Condi\-tioned on $B_0$ the sequence $\bigl((Y_i,\tau_i)\bigr)_{i \ge 1}$ is
  i.i.d.\ and $Y_{1}$ is symmetrically distributed. Furthermore, there exist
  constants $C,c\in(0,\infty)$, such that
  \begin{align} \label{eq:bounds}
    \Pr(\norm{Y_{1}} > n | B_0) \le Ce^{-c n} \quad \text{and} \quad
    \Pr(\tau_1 >n|B_0) \le Ce^{-c n}.
  \end{align}
\end{lemma}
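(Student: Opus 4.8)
The plan is to construct the regeneration times through a sequence of stopping times with respect to an increasing filtration built from the ``local construction'' $\gamma_k$, exploiting the key observation that whether $\gamma_k(k)$ lies on the infinite cluster $\CC$ is \emph{not} a local event, but can be approximated by local events together with an independent ``restart'' mechanism. Concretely, I would introduce the filtration $\FF_k \coloneqq \sigma(\omega(y,i), \widetilde\omega(y,i) : y\in\Z^d,\, 0\le i<k)$; by the remark after \eqref{e:gamma}, the path $\gamma_k$ is $\FF_k$-measurable. The crucial point is that the event $\{T_1 = k\} = \{\xi(\gamma_k(k))=1,\ \xi(\gamma_j(j))=0 \text{ for } j<k\}$ is \emph{not} $\FF_k$-measurable since $\xi$ depends on the infinite future; to get around this, I would follow Kuczek's idea and define $T_1$ via a sequence of ``trials'': at each step $k$ where $\gamma_k(k-1)$ is a newly reached site, one checks whether $\gamma_k(k)\in\CC$; if $\ell(\gamma_k(k)) = \infty$ this is the regeneration, while if $\ell(\gamma_k(k)) = r < \infty$ then by Lemma~\ref{l:gamma}(e) the path is forced to explore this finite branch for exactly $r+2$ further steps before the local construction moves on — and this exploration consumes a geometric-like number of steps. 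Since the carrying capacities $\omega$ above any not-yet-inspected time-layer are i.i.d.\ and independent of $\FF_k$, each trial succeeds (i.e.\ $\gamma_k(k)\in\CC$) with probability bounded below by $\rho \coloneqq \Pr(\tau^{\mathbf 0} = \infty) > 0$, \emph{uniformly} over the past, by attractiveness/monotonicity of oriented percolation.

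The key steps, in order, are: \emph{(i)} Set up the filtration $(\FF_k)$ and verify that at each ``fresh'' time $k$ the event $\{\ell(\gamma_k(k)) = \infty\}$ has conditional probability (given $\FF_k$) bounded below by $\rho>0$ --- here one uses that $\gamma_k(k)$ is an $\FF_k$-measurable site, that $\xi(\gamma_k(k))=1$ iff there is an infinite open path from it (depending only on $\omega$ strictly above layer $k$, hence independent of $\FF_k$ once we condition on $\gamma_k(k)$ being open), and that this probability equals $\Pr(\tau^{\mathbf 0}=\infty) = \rho$. \emph{(ii)} Show that the ``waiting cost'' of a failed trial is light-tailed: if $\gamma_k(k)\notin\CC$ with $\ell(\gamma_k(k)) = r$, then the local construction is tied up for $r+2$ steps (Lemma~\ref{l:gamma}(e)), and $r$ is the height of a finite oriented-percolation cluster, which has an exponential tail by the appendix result quoted in the introduction. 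Also the spatial displacement during a failed excursion is controlled by $r$ since the walk moves at speed $\le 1$ per step. \emph{(iii)} Combine (i) and (ii): $T_1$ is reached after a number of trials that is stochastically dominated by a geometric random variable with parameter $\rho$, and the total number of steps $\tau_1 = T_1$ is a geometric sum of i.i.d.\ light-tailed excursion lengths, hence itself has an exponential tail --- likewise $\|Y_1\| = \|X_{T_1}\| \le T_1$ inherits an exponential tail. \emph{(iv)} For the i.i.d.\ structure: by Lemma~\ref{l:gamma}(d), at time $T_1$ the trajectory up to $T_1$ is determined and, crucially, the future of the walk after $T_1$ depends only on $\omega, \widetilde\omega$ in layers $\ge T_1$ restricted to the cluster emanating from $\gamma_{T_1}(T_1)$; conditioned on $\{\xi(\gamma_{T_1}(T_1))=1\}$ this future has exactly the same law as the original walk from $(\mathbf 0,0)$ conditioned on $B_0$ --- so the strong Markov property at $T_1$ (with respect to the enlarged filtration that also reveals the ``success'' of the final trial) gives that $(Y_i,\tau_i)_{i\ge 2}$ is independent of $(Y_1,\tau_1)$ with the same law; iterating yields the full i.i.d.\ claim. \emph{(v)} Symmetry of $Y_1$ follows from the symmetry of the neighbourhood $U$ and of the law of $\omega,\widetilde\omega$ under the reflection $x\mapsto -x$, which commutes with the whole construction.

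The main obstacle I expect is making step \emph{(i)} and the independence in step \emph{(iv)} rigorous simultaneously: one must carefully define \emph{which} randomness is ``used up'' by time $T_1$ and which is ``fresh,'' because the events $\{\gamma_k(k)\in\CC\}$ are tail events in $\omega$ and naively conditioning on them destroys the independence needed for the geometric-domination argument. The clean way is to enlarge the probability space with auxiliary i.i.d.\ uniform random variables (one per time-layer, or per potential trial site) that are revealed along with each trial and that decide the outcome of that trial in a way measurable with respect to $\omega$ above the current layer; then $\FF_k$ genuinely increases, each trial is a conditionally-independent Bernoulli$(\ge\rho)$ given the past, and the strong Markov property applies at the stopping time $T_1$. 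Secondarily, one must check that failed excursions truly cannot ``chain'' into arbitrarily long delays --- i.e.\ that after exploring one finite branch the construction genuinely advances to a new fresh site rather than re-entering already-explored territory --- which again follows from the stability properties Lemma~\ref{l:gamma}(b)--(e), but requires bookkeeping to state precisely. Once the renewal/regeneration bookkeeping is in place, the exponential bounds in \eqref{eq:bounds} are a routine consequence of the exponential tail of cluster heights plus the geometric number of trials.
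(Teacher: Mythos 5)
Your overall strategy is the paper's own (a Kuczek-type trial scheme built on the local construction $\gamma_k$, with trials at stopping times $\sigma_k$ determined by the death of finite branches via Lemma~\ref{l:gamma}(e), exponential tails of finite-cluster heights from Lemma~\ref{lem:exp-bound-perc}, geometric domination of the number of trials, $\norm{Y_1}\le\tau_1$, and symmetry of the construction). The gap is in how you handle the conditioning on $B_0$, which is where the paper does its real work. Your step \emph{(i)} asserts that, given $\FF_k$, a trial succeeds with probability exactly $\rho=\Pr(\tau^{\mathbf 0}=\infty)$; that computation is valid under the \emph{unconditional} law $\Pr$, but the lemma is a statement under $\Pr(\cdot\mid B_0)$, and $B_0$ is itself an event about all future layers, so it biases the conditional law of the environment above level $\sigma_k$ given the past. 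The paper resolves this with the conditional FKG bound \eqref{eq:FKG-cond}, proved by decomposing $B_0$ over the reached set $S$ at level $n$ and applying FKG to the increasing events $\{(w,n)\to\infty\}$ and $\bigcup_{y\in S}\{(y,n)\to\infty\}$; your passing appeal to ``attractiveness/monotonicity'' is pointing at this but is not an argument. (For the tail bounds \eqref{eq:bounds} alone your route is salvageable without FKG, since $\Pr(\tau_1>n\mid B_0)\le \Pr(T_1>n)/\Pr(B_0)$ and under $\Pr$ the trials really are conditionally Bernoulli; but this shortcut does not give you the conditional structure you need for the i.i.d.\ claim.)

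The second, more serious gap is step \emph{(iv)}. There is no off-the-shelf ``strong Markov property'' to invoke here: $\mathcal{F}_{T_1}$ deliberately contains future-measurable information, namely the bits $\xi(\gamma_j(j))$, $j\le T_1$, and the whole point is to show that on $\{\gamma_{T_1}(T_1)=(z,n)\}$ the \emph{only} future information it contains is the single increasing event $\{(z,n)\to\infty\}$. Concretely, one must verify the factorisation $A'\cap\{\gamma_{T_1}(T_1)=(z,n)\}=A'_{(z,n)}\cap\{(z,n)\to\infty\}$ with $A'_{(z,n)}\in\mathcal G_0^n$ for every $A'\in\mathcal F_{T_1}$ (this uses that each failed trial's negative information has been localised below level $n$ by the time $\sigma$'s advance past it), and then conclude \eqref{e:condition} by independence of disjoint layers and translation invariance, as in \eqref{e:iidc}. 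Your proposed fix --- enlarging the space with auxiliary uniforms that ``decide the outcome of each trial'' --- does not address this: the trial outcomes are already functions of $\omega$ above the current layer, and adding external randomness neither removes the conditioning on $B_0$ nor produces the factorisation; the extra randomness $\widetilde\omega$ already present in the construction plays a different role (tie-breaking among maximisers). So the key identity behind the i.i.d.\ property is asserted in your sketch but not proved, and the mechanism you offer in its place would not prove it.
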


\begin{proof}[Proof of Theorem~\ref{thm:qLLN-aCLT}]
  By symmetry and \eqref{eq:bounds}, we have $\E[Y_{1}| B_0]= \mathbf{0}$.
  Non-triviality of $\Phi$ follows since $T_1$ and $Y_{1}$ are not
  deterministic multiples of each other and $Y_{1}$ is not
  concentrated on a subspace which follows from the fact that
  $\Pr(Y_{1}=x, \tau_1=n | B_0)>0$ for all $x \in \Z^d$ and $n \geq
  \norm{x}$. To see this we observe that the configuration of the
  $\omega$'s in a space-time box of side-length $n$ around the origin
  consisting only of closed sites except for the origin itself and two
  disjoint ``rays'' of open sites, the first connecting
  $(\mathbf{0},0)$ to $(-x,n-1)$ and ending there, the second
  connecting $(\mathbf{0},0)$ up to $(x,n)$, has positive probability.

  The rest of the proof is standard, see e.g.\ the proof of Corollary~1 in
  \citet{Kuczek:1989}, or the proof of Theorem 4.1. in \citet{MR1763302}.

  Note that since the ``basic neighbourhood'' $U(\cdot,\cdot)$ is
  symmetric, $\Phi$ is isotropic, i.e.\ its covariance matrix
  $(\Sigma_{ij})$ is a multiple of the $d$-dimensional identity
  matrix: Because $\Phi$ is invariant under permutation of
  coordinates, we must have $\Sigma_{ii}=\Sigma_{jj}=\sigma^2 \in
  (0,\infty)$, $\Sigma_{ij} = s \in \R$ for all $1 \leq i \neq j \leq
  d$. Furthermore, the law $\Phi$ then also inherits invariance under
  sign flips of individual coordinates, hence we must have $s=-s=0$.
\end{proof}

\subsection{Proof of Lemma~\ref{lem:regenstructure}}

Symmetry of the law of $Y_{1}$ follows from the symmetry of the
construction. Note that $\norm{Y_{1}} \le \tau_1$ a.s. Thus, the first
bound in \eqref{eq:bounds} follows from the second, which we prove
now.

The value of $T_1$ can be obtained by gradually constructing
$\gamma_k$ and checking whether $\xi (\gamma_k(k))= 1$ for
$k=1,2,\dots$. We abbreviate $\widetilde\Pr(\cdot) := \Pr(\cdot |
B_0)$. Let $\sigma$ be a $(\mathcal G_0^k)_{k\in\N}$-stopping time,
denote the $\sigma$-past by $\mathcal G_0^\sigma$, let $W$ be a
$\mathcal G_0^\sigma$-measurable $\Z^d$-valued random variable. An
application of the FKG inequality yields
\begin{align}
\label{eq:FKG-cond}
\widetilde\Pr\big( \xi((W,\sigma)) = 1 \, \big| \, \mathcal G_0^\sigma \big)
\geq \Pr(B_0),
\end{align}
as follows:
For any $n\in \N$, $B_0$ can be written as the finite
disjoint union
\[
B_0 = \bigcup_{S \subset B_n(\mathbf{0})}
\bigg( \big\{ (\boldsymbol 0,0) \Rightarrow S \times \{n\} \big\}
\cap \Big(
\bigcup_{y \in S} \{ (y,n) \to \infty \} \Big) \bigg)
\]
where $B_n(\mathbf{0})$ denotes the $\norm{\cdot}$-ball of radius $n$ around $0$
in $\Z^d$ and $\big\{ (\boldsymbol 0,0) \Rightarrow S \times \{n\} \big\} \in
\mathcal{G}_0^n$ denotes the event that the set of sites $y \in \Z^d$
with the property that $(y,n)$ can be reached from $(\boldsymbol 0,0)$ via a
directed nearest neighbour path whose steps begin on open sites
equals exactly $S$.

Now pick $A \in \mathcal G_0^\sigma$. We have
\begin{align*}
& \Pr \big( \{ \xi((W,\sigma)) = 1 \} \cap A \cap B_0 \big)
= \sum_{w, n}
\Pr\big( \{ \sigma=n, W=w \} \cap A \cap \{ (w,n) \to \infty \} \cap B_0 \big)
\\
& = \sum_{w, n} \sum_{S \subset B_n(\mathbf{0})}
\Pr\Big( \{ \sigma=n, W=w \} \cap A \cap
\big\{ (\boldsymbol 0,0) \Rightarrow S \times \{n\} \big\} \\[-3ex]
& \hspace{17em} \cap
\{ (w,n) \to \infty \} \cap \bigcup\nolimits_{y \in S} \{ (y,n) \to \infty \} \Big) \\
& = \sum_{w, n} \sum_{S \subset B_n(\mathbf{0})}
\Pr\Big( \{ \sigma=n, W=w \} \cap A \cap
\big\{ (\boldsymbol 0,0) \Rightarrow S \times \{n\} \big\} \Big) \\[-3ex]
& \hspace{16em} \times \Pr\Big(
\{ (w,n) \to \infty \} \cap \bigcup\nolimits_{y \in S} \{ (y,n) \to \infty \} \Big) \\
& \geq \sum_{w, n} \sum_{S \subset B_n(\mathbf{0})}
\Pr\Big( \{ \sigma=n, W=w \} \cap A \cap
\big\{ (\boldsymbol 0,0) \Rightarrow S \times \{n\} \big\} \Big) \\[-3ex]
& \hspace{16em} \times
 \Pr\Big(\bigcup\nolimits_{y \in S} \{ (y,n) \to \infty \} \Big)
 \times \Pr\Big( \{ (w,n) \to \infty \} \Big) \\
& = \sum_{w, n} \sum_{S \subset B_n(\mathbf{0})}
\Pr\Big( \{ \sigma=n, W=w \} \cap A \cap
\big\{ (\boldsymbol 0,0) \Rightarrow S \times \{n\} \big\} \cap
\bigcup_{y \in S} \{ (y,n) \to \infty \} \Big) \Pr(B_0) \\
& = \Pr\big( A \cap B_0 \big) \Pr(B_0)
\end{align*}
where we used the FKG inequality in the fourth line and the fact that
$\mathcal G_0^n$ and $\mathcal G_n^\infty$ are independent in the
third and the fifth lines. Since $A \in \mathcal G_0^\sigma$ is arbitrary, this
proves \eqref{eq:FKG-cond}.
\smallskip

Applying \eqref{eq:FKG-cond} with $\sigma=\sigma_0:=1$ and
$W=\gamma_{\sigma_0}(\sigma_0)$ yields
\begin{equation}
  \label{e:fkg}
  \widetilde\Pr\left(T_1=1\right)=
  \widetilde\Pr\left(T_1=1 | \mathcal{G}_0^1\right)
  = \widetilde\Pr\left(\xi (\gamma_1(1))=1 | \mathcal{G}_0^1\right)
  \geq \Pr(B_0) .
\end{equation}
When $\gamma_1(1)\notin \mathcal C$, we should wait for the local
construction to discover this fact, i.e.\ for the finite directed
cluster starting at $\gamma_1(1)$ to die out. More precisely, on the
event $\mathcal B_1=\{\gamma_1(1)\notin \mathcal C\}$,
$\ell(\gamma_1(1))$, the length of the longest directed open path
starting at $\gamma_1(1)$ is finite and the local construction
discovers this fact at time $\ell(\gamma_1(1)) + \sigma_0 +1$ when the
longest paths gets stuck, i.e.\ when it runs into a ``dead end''
produced by closed sites. Thus, $\sigma_1$, defined by
$\sigma_1=\ell(\gamma_1(1))+2+\sigma_0$ is a stopping time w.r.t.\ the
filtration $(\mathcal{G}_0^m)_{m=1,2,\dots}$ and $\mathcal{B}_1 \in
\mathcal{G}_0^{\sigma_1}$. On $\mathcal B_1$, by
Lemma~\ref{l:gamma}(e) with $k=1$, $\xi(\gamma_m(1))=0$ and hence also
$\xi (\gamma_m(m))=0$ for all $m< \sigma_1$. Thus, we
have \[\mathcal{B}_1 = \mathcal{B}_1 \cap \{ \xi (\gamma_m(m))=0,
\forall \, 1\le m< \sigma_1\} \in \mathcal{G}_0^{\sigma_1}\] and
\begin{align}
\indset{\mathcal{B}_1}
\widetilde{\Pr}\big(T_1=\sigma_1 | \mathcal{G}_0^{\sigma_1} \big)
= \indset{\mathcal{B}_1}
\widetilde{\Pr}\big(\xi(\gamma_{\sigma_1}(\sigma_1))=1 | \mathcal{G}_0^{\sigma_1} \big)
\geq \indset{\mathcal{B}_1} \Pr(B_0)
\end{align}
by \eqref{eq:FKG-cond}.
\smallskip

Let $\mathcal B_2=\{\gamma_{\sigma_1}(\sigma_1)\notin \mathcal C\}$.
On $\mathcal B_1 \cap \mathcal B_2^c$ we know that $T_1=\sigma_1$,
otherwise we define the stopping time
$\sigma_2=\sigma_1+\ell(\gamma_{\sigma_1} (\sigma_1))+2$. By a similar
reasoning as before, noting that ${\mathcal B}_2 \in
\mathcal{G}_0^{\sigma_2}$, we find
\begin{equation}
 \indset{\mathcal B_1 \cap \mathcal B_2}
  \widetilde{\Pr}(T_1=\sigma_2|\mathcal{G}_0^{\sigma_2})\ge
  \indset{\mathcal B_1 \cap \mathcal B_2} \Pr (B_0).
\end{equation}
By repeating the same argument, setting recursively
\begin{equation}
  \mathcal B_{k+1}=\{\gamma_{\sigma_k}(\sigma_k)\notin \mathcal C\},
  \qquad
  \sigma_{k+1} = \sigma_k + \ell(\gamma_{\sigma_k}(\sigma_k))+2
  \quad \text{on}\;\; {
    \mathcal B_1 \cap \mathcal B_2 \cap \cdots \cap \mathcal B_{k+1}},
\end{equation}
we then get
\begin{equation}
  \label{eq:sigma.k+1}
  \indset{\mathcal B_1 \cap \mathcal B_2 \cap \cdots \cap \mathcal B_{k+1}}
  \Pr\big(T_1=\sigma_{k+1}\big| \mathcal{G}_0^{\sigma_{k+1}}\big) \ge
  \indset{\mathcal B_1 \cap \mathcal B_2 \cap \cdots \cap \mathcal B_{k+1}}
  \Pr (B_0).
\end{equation}
The number of repetitions needed to find the value of $T_1$ is thus
dominated by a geometric random variable with success probability
$\Pr(B_0)>0$. Moreover, by Lemma~\ref{lem:exp-bound-perc}, the random
variables
$\sigma_{k+1}-\sigma_{k}=\ell(\gamma_{\sigma_k}(\sigma_k))+2$ have
exponential tails. By elementary considerations, this implies that
$T_1$ satisfies the desired second inequality in \eqref{eq:bounds}.
See Figure~\ref{fig:expl-branch} for an illustration of the
construction of the $T_i$. \smallskip

\begin{figure}
  \centering
  \centering
  \begin{tikzpicture}[xscale=0.65,yscale=0.65]

    \draw (-12,0) -- (-12.5,0.5)--(-12.25,0.75);
    \draw (-11.5,1) -- (-11,1.5);
    \draw (-12.5,2) -- (-13,2.5) -- (-12.75,2.75) -- (-13,3);
    \draw (-11,3.5) -- (-10.5,4) -- (-10.75,4.25) -- (-10,5);
    \draw (-11.5,4.5) -- (-11,5)--(-11.25,5.25)--(-11,5.5) ;

    \draw[very thick]  (-12,0)-- (-11.25,0.75) --
    (-12.5,2)--(-12,2.5)--(-11,3.5) --(-11.75,4.25) --
    (-11.5,4.5) -- (-12.25,5.25) -- (-12,5.5) -- (-12.5,6) -- (-11.5,7)  ;


    \draw[dashed, very thin] (-14.5,0) -- (6,0);   \draw (-15,0) node {\footnotesize $T_0$};
    \draw[dashed, very thin] (-14.5,2) -- (-9,2) -- (-5,8) -- (5.0,8);   \draw (-15,2) node {\footnotesize $T_1$};

    \draw[dashed, very thin] (-14.5,3.5) -- (-11.01,3.5);   \draw (-15,3.5) node {\footnotesize $T_2$};
    \draw[dashed, very thin] (-14.5,6) -- (-12.55,6);  \draw (-15,6) node {\footnotesize $T_3$};
    \draw[dashed, very thin] (-13.5,6.25) -- (-12.20,6.25);
    \draw[dashed, very thin] (-13.5,6.5) -- (-12.10,6.5);
    \draw[dashed, very thin] (-14.5,6.75) -- (-11.85,6.75);  \draw (-15,6.75) node {\footnotesize $T_6$};


    \draw[dashed, very thin] (1,5) -- (6,5);  \draw (6.5,5) node {\footnotesize $\sigma_1$};
    \draw (6.0,8) node {\footnotesize $\sigma_2=T_1$};

    \filldraw[fill=black]
    (1,0) circle (3pt)
    (2,1) circle (3pt)
    (0,1) circle (3pt)
    (-1,2) circle (3pt)
    (3,2) circle (3pt)
    (0,3) circle (3pt)
    (4,3) circle (3pt)
    (2,3) circle (3pt)
    (4,5) circle (3pt)
    (3,4) circle (3pt)
    (2,5) circle (3pt)
    (5,6) circle (3pt)
    (1,6) circle (3pt)
    (0,7) circle (3pt)
    (-1,8) circle (3pt)
;

    \filldraw[fill=white]
    (1,2) circle (3pt)
    (-2,3) circle (3pt)
    (-1,4) circle (3pt)
    (1,4) circle (3pt)
    (5,4) circle (3pt)
    (3,6) circle (3pt)
    (6,7) circle (3pt)
    (4,7) circle (3pt)
    (2,7) circle (3pt)
    (2,7) circle (3pt);

    \draw[thick] (0,1) -- (-1,2) -- (0,3);
    \draw[thick] (2,1) -- (4,3)-- (3,4);
    \draw[thick] (4,5) -- (5,6);
    \draw[thick] (2,5) -- (-1,8);

    \draw[-stealth,thick] (0.9,0.1) -- (0.1,0.9);
    \draw[-stealth,thick,dotted] (1.1,0.1) -- (1.9,0.9);

    \draw[-stealth,thick] (3.1,4.1) -- (3.9,4.9);
    \draw[-stealth,thick,dotted] (2.9,4.1) -- (2.1,4.9);

  \end{tikzpicture}

  \caption{``Discovering'' of the trajectory of $X$ between the
    regeneration times $T_0$ and $T_6$ in case $U =\{-1,1\}$ is shown on
    the left-hand side of the figure. On the right-hand side we zoom into
    the evolution between $T_0$ and $T_1$. On the two ``relevant sites''
    we show the realisation of $\widetilde \omega$'s using the same
    conventions as in Figure~\ref{fig:gammapaths} (in particular,
      $U(x,n)=\{(x+1,n+1),(x-1,n+1)\}$, cf.~Remark~\ref{rem:gen-neighb}).}
  \label{fig:expl-branch}
\end{figure}
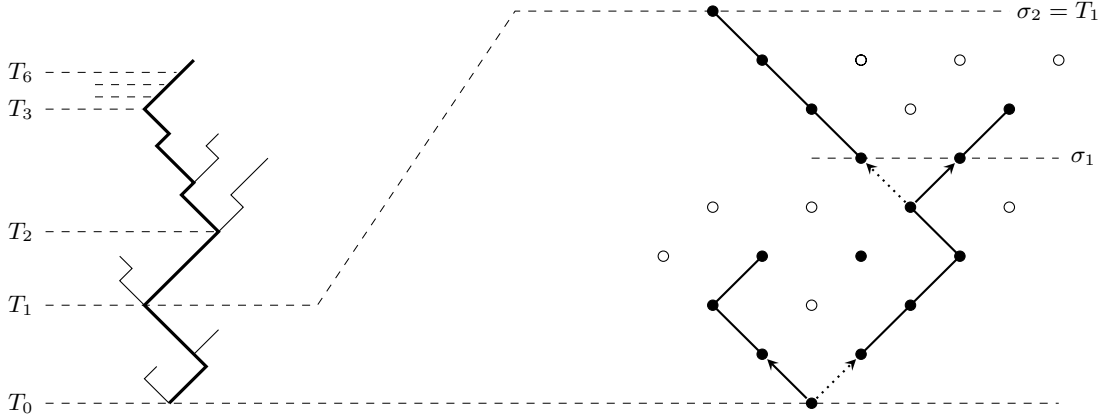

Finally we should prove that $\bigl((\tau_i,Y_i)\bigr)_{i\ge 1}$ is an
i.i.d.~sequence. Let $\theta_x$, $x\in \mathbb Z^d\times \mathbb Z$ be
the standard shift operator on $\Omega $, $(\theta_x \omega)(y)=\omega
(x+y)$. We will show that the only information we have about the
future of the environment after $T_1$ (that is about $\omega(x,n)$,
$x\in \mathbb Z^d$, $n\ge T_1$) at the instant when we discover $T_1$
is that $\xi (\gamma_{T_1}(T_1))=1$.

To formalise this let $\mathcal F_k$, $k\ge 0$, be the sigma-algebra
generated by $\mathcal G_0^{k}$ and the random variables $\xi
(\gamma_j(j))$, $0\le j \leq k$, in particular, $\mathcal F_k$
contains the information about the environments $\omega $ and
$\widetilde\omega$ that the construction discovers by the time of
checking whether $T_1=k$. Note that $T_1$ is a stopping time w.r.t.\
the filtration $(\mathcal F_k)$, write $\mathcal F_{T_1}$ for the
$T_1$-past.

We check that for any event $A\in \mathcal G_0^\infty$,
\begin{equation}
  \label{e:condition}
  \widetilde\Pr(\theta_{\gamma_{T_1}(T_1)} (A) | \mathcal F_{T_1}) =
  \widetilde\Pr(A).
\end{equation}
Indeed, if this is the case, then $(\tau_2, Y_2)$ have the same
distribution as $(\tau_1, Y_1)$ and they are independent.
Proceeding by induction then implies the i.i.d.~property.
\smallskip

To prove \eqref{e:condition} one argues similarly as before:
Pick $A' \in \mathcal F_{T_1}$, w.l.o.g.\ assume $A' \subset B_0$
(otherwise consider $A' \cap B_0$).
Fix $(z,n) \in \Z^d \times \N$. When $n \geq 2$, by construction,
\begin{align*}
  \big\{ \gamma_{T_1}(T_1) = (z,n) \big\}
  & = \bigcup_{k=1}^{\lfloor n/2 \rfloor}
  \big\{ T_1=\sigma_k=n, \gamma_n(n) = (z,n) \big\} \\
  & = \bigcup_{k=1}^{\lfloor n/2 \rfloor} \Big( \mathcal{B}_1 \cap
  \dots \cap \mathcal{B}_k \cap
  \big\{ \sigma_k=n, \gamma_n(n) = (z,n) \big\}
  \cap \{ (z,n) \to \infty \} \Big) \, ;
\end{align*}
for $n=1$, $\big\{ \gamma_{T_1}(T_1) = (z,1) \big\}
= \{ \gamma_1(1) = (z,1) \} \cap \{ (z,1) \to \infty \}$.
In particular, there exists $A'_{(z,n)} \in \mathcal G_0^n$
such that
\[
A' \cap \big\{ \gamma_{T_1}(T_1) = (z,n) \big\}
= A'_{(z,n)} \cap \{ (z,n) \to \infty \} \;\;\; \big( \subset B_0 \, \big).
\]
Thus
\begin{equation} \label{e:iidc}
  \begin{split}
& \Pr\big(\theta_{\gamma_{T_1}(T_1)} (A) \cap A' \cap B_0 \big)
= \sum_{(z,n)} \Pr\big( \theta_{(z,n)}(A) \cap A' \cap
\{ \gamma_{T_1}(T_1) = (z,n)\} \big)  \\
& = \sum_{(z,n)} \Pr\big( \theta_{(z,n)}(A) \cap \{ (z,n) \to \infty \}
\cap A'_{(z,n)} \big)  \\
& = \sum_{(z,n)} \Pr\big( \theta_{(z,n)}(A) \cap \{ (z,n) \to \infty \} \big)
\Pr\big( A'_{(z,n)} \big)  \\
& = \sum_{(z,n)} \Pr\big( \theta_{(z,n)}(A) \, | \, \{ (z,n) \to \infty \} \big)
\Pr\big( A'_{(z,n)} \big) \Pr\big( (z,n) \to \infty \big)  \\
& = \widetilde{\Pr}(A)
\sum_{(z,n)} \Pr\big( A'_{(z,n)} \cap \{ (z,n) \to \infty \} \big)
= \widetilde{\Pr}(A) \Pr(A') = \widetilde{\Pr}(A) \Pr(A' \cap B_0),
  \end{split}
\end{equation}
i.e.\ $\widetilde\Pr\big(\theta_{\gamma_{T_1}(T_1)} (A) \cap A'\big)
= \widetilde{\Pr}(A) \widetilde{\Pr}(A')$, where we used
the independence of $\mathcal G_0^n$ and $\mathcal G_n^\infty$
in the  third and in the fifth lines and
translation invariance in the fifth line. Since $A' \in \mathcal F_{T_1}$
is arbitrary, this proves \eqref{e:condition} and concludes the proof
of Lemma~\ref{lem:regenstructure}. \hfill \qed

\begin{remark}[Continuity of $\sigma(p)$, cf.~Remark~\ref{rem:sigma.p}]
  \label{rem:sigma-cont}
  The construction in the proof of Lemma~\ref{lem:regenstructure}
  also shows that the functions
  \begin{equation}
    \label{eq:ptoEp-cont}
    p \mapsto \E_p\big[ Y_{1,1}^2 \big], \quad
    p \mapsto \E_p[\tau_1] \quad
    \text{and in particular} \;\; p \mapsto \sigma^2(p)
  \end{equation}
  are continuous on $(p_c,1]$.
\end{remark}
\begin{proof}
  For fixed $z \in \Z, n \in \N$ note that by construction
 $\{ Y_{1,1}=z, \tau_1=n \} = D_{z,n} \cap \{ (z,n) \to \infty \}$
  where $D_{z,n} \in \sigma\big( \omega(x,m), \widetilde\omega(x,m),
  (x,m) \in B_n \big) \subset \mathcal{G}_0^n$
  (with $B_n := \{ (x,m) : \norm{x} \leq n, 0 \leq m<n \}$) can be expressed
  as a finite union
  \begin{align*}
  D_{z,n} = \bigcup_{\overline{\omega} \in C(z,n)}
  \Big(
  \big\{ & \omega(x,m) = \overline{\omega}(x,m) \; \text{for} \;
  \norm{x} \leq n, m < n \big\} \\[-3ex]
  & \cap \big\{ (\widetilde\omega(x,m) : \norm{x} \leq n, m < n)
  \in \widetilde{C}(z,n,\overline{\omega}) \big\}
  \Big)
  \end{align*}
  for certain $C(z,n) \subset \{0,1\}^{B_n}$
  and $\widetilde{C}(z,n,\overline{\omega}) \subset
  \{ \text{permutations of $1,\dots,n$} \}^{B_n}$.
  Thus,
  \[
  p \mapsto \Pr_p\big(Y_{1,1}=z, \tau_1=n\big)
  = \Pr_p(D_{z,n}) \Pr_p\big( (z,n) \to \infty\big)
  = \Pr_p(D_{z,n}) \Pr_p\big( (\boldsymbol 0,0) \to \infty\big)
  \]
  is continuous on $(p_c,1]$ for any $(z,n) \in \Z \times \N$ :
  Since $D_{z,n}$ depends only on
  finitely many coordinates of $\omega$ and $\widetilde\omega$,
  continuity of $p \mapsto \Pr_p(D_{z,n})$ is obvious e.g.\
  from a simple coupling argument; continuity of
  $p \mapsto \Pr_p\big( (\boldsymbol 0,0) \to \infty\big)$ is guaranteed by
  \citep[][Thm.~2]{GrimmetHiemer:02}.
  Combining this with exponential tail bounds for $|Y_{1,1}| \leq \tau_1$
  that can be chosen uniform in $p \in [p_c+\delta,1]$ for any $\delta>0$
  (cf.~Lemma~\ref{lem:exp-bound-perc} and
  Remark~\ref{rem:exp-bound-perc-unif} below) proves \eqref{eq:ptoEp-cont}.
\end{proof}

\section[Two walks on the same cluster and quenched CLT]
  {Joint dynamics of two  walks on the same realisation
    of the cluster and the quenched CLT}
\label{sec:joint-dynamics}

In this section we study the joint dynamics of two walks on the same
realisation of the cluster $\mathcal C$, in order to show the quenched
CLT, Theorem~\ref{thm:quenchedCLT}.

For $x,x'\in \mathbb Z^d$, let $B_{x,x'}$ be the event
$\{\xi_0(x)=\xi_0(x')=1\}$. Conditioned on $\omega $ and $B_{x,x'}$, let
$X\coloneqq (X_n)_n$ and $X' \coloneqq (X'_n)_n$ be two \emph{independent}
random walks with
transition probabilities \eqref{eq:defXdynamics} started on $(x,0)$ and
$(x',0)$ respectively.
Observe that $X$ and $X'$ take their steps independently but in the same
environment.
Note also that, unlike true ancestral lines, the random walks $X$, $X'$  can
meet and then separate again.

We now extend the local construction of the random walk from
Subsection~\ref{ss:localconst} to the two-walk case. Assume that in
addition, a collection of independent random permutations
$\widetilde\omega'=(\widetilde\omega'{(x,n)})_{(x,n) \in \mathbb Z^d
  \times \mathbb Z}$ with the same distribution as $\widetilde\omega$
on the same probability space $(\Omega ,\mathcal A, \mathbb P)$ is
given, and define paths $\gamma_k'^{(x,n)}$ analogously as
$\gamma_k^{(x,n)}$ using $\widetilde\omega'$ instead of
$\widetilde\omega$ but the same $\omega $. Note that for given $n$ and
$k$, the construction of $\gamma_k^{(x,n)}$ and $\gamma_k'^{(x,n)}$ is
measurable with respect to
\begin{align}
  \label{eq:defGnnk2}
  \widehat{\mathcal{G}}_{n}^{n+k}\coloneqq
  \sigma\big(\omega(y,i),\widetilde\omega{(y,i)},\widetilde\omega'{(y,i)} : y
  \in \Z^d, n \leq i < n+k \big).
\end{align}

On $B_{x,x'}$, using the same reasoning as in the previous section, we may
couple the random walks $X$, $X'$ started from $x$, $x'$ with $\omega $,
$\widetilde\omega$ and $\widetilde\omega'$ by
\begin{equation}
  (X_k,k)=\lim_{n\to \infty} \gamma^{(x,0)}_n(k),\qquad
  (X_k',k)=\lim_{n\to \infty} \gamma_n'^{(x',0)}(k).
\end{equation}

\subsection{Joint regeneration structure of two random walks}
\label{sec:joint-renew}

The individual regeneration sequences are defined as in Section~\ref{sec:reg-struct}.
We now define a joint regeneration sequence for the pair $X$, $X'$.
We set $T_0\coloneqq0$, $T'_0\coloneqq0$ and for $r \in \Z_+$ put
\begin{align}
  \label{eq:defRi}
  T_{r+1} & \coloneqq \inf \{ n > T_{r} : \xi(\gamma_n^{(x,0)}(n) )=1 \}, \\
  \label{eq:defRi'}
  T'_{r+1} & \coloneqq \inf \{ n > T'_{r} : \xi(\gamma_n'^{(x',0)}(n) )=1 \}.
\end{align}
Note that if $(X_0,X'_0)=(x,x)$, then under $\Pr(\cdot | B_{x,x})$,
$(X_{T_{r+1}}-X_{T_r},T_{r+1}-T_r)_r$ is an i.i.d.\ sequence and
$(X'_{T'_{r+1}}-X'_{T'_r},T'_{r+1}-T'_r)_r$ has the same law but the two objects
are of course not independent because both build on the same cluster given by
the same $\xi$.

Now we define the sequence of simultaneous regeneration times. We set
$J_0\coloneqq0$, $J'_0\coloneqq0$ and for $m \in \Z_+$ let
\begin{align}
  \label{eq:Jmdef}
  J_{m+1} &  \coloneqq \min \big\{ j > J_{m} : T_j = T'_{j'} \; \text{for some}\; j'>J'_{m} \big\}, \\
  \label{eq:Jm'def}
  J'_{m+1} & \coloneqq \min \big\{ j' > J'_{m} : T'_{j'} = T_j \; \text{for some}\; j > J_{m} \big\},
\end{align}
then
\begin{align} \label{eq:12}
  T^{\Sim}_m \coloneqq T_{J_m} = T'_{J'_m}, \quad m=0,1,2,\dots
\end{align}
is the sequence of simultaneous regeneration times. Note that
\begin{align}
  T^{\Sim}_m = \min \Big( \{ T_j : T_j > T^{\Sim}_{m-1} \}
  \cap \{ T'_j : T'_j > T^{\Sim}_{m-1} \} \Big).
\end{align}

As in the one walk case we write for the increments $Y_k\coloneqq X_{T_k}-X_{T_{k-1}}$,
$\tau_k\coloneqq T_k-T_{k-1}$, $Y'_k\coloneqq X'_{T'_k}-X'_{T'_{k-1}}$,
$\tau'_k\coloneqq T'_k-T'_{k-1}$
and furthermore  we define
\begin{align}
  \label{eq:deftildeXm}
  & \widetilde{X}_m \coloneqq X_{T_m}, \quad \widetilde{X}'_m \coloneqq X'_{T'_m}, \quad m\in\Z_+, \\
  \label{eq:defhatXell}
  & \widehat{X}_\ell \coloneqq X_{T^{\Sim}_\ell} = \widetilde{X}_{J_\ell}
  = X_{T_{J_\ell}}, \quad
  \widehat{X}'_\ell \coloneqq X'_{T^{\Sim}_\ell} = \widetilde X'_{J'_\ell} =
  X'_{T'_{J'_\ell}}, \quad \ell \in \Z_+.
\end{align}
Note that $\widetilde{X}_m$, $\widetilde{X}'_m$ will typically refer to the two
walks $X$ and $X'$ at different real-time instants.

Let us denote the ``pieces between simultaneous regenerations'' by
\begin{align}
  \label{eq:Xi_m}
  \Xi_m \coloneqq \Big( \big( Y_k, \tau_k \big)_{k=J_{m-1}+1}^{J_m},
  \big( Y'_k, \tau'_k \big)_{k=J'_{m-1}+1}^{J'_m},
  \widehat X_m, \widehat X'_m \Big),
  \quad m=1,2,\dots.
\end{align}
Note that $\Xi_m$ takes values in $\mathbbm F \times \mathbbm F \times \Z^d
\times \Z^d$, where $\mathbbm F \coloneqq \cup_{n=1}^\infty (\Z^d \times \N)^n$.
\smallskip

The following result is the ``joint'' version of bound
\eqref{eq:bounds} in Lemma~\ref{lem:regenstructure}. Heuristically,
since the individual regeneration times have exponential tails and
immediate joint regeneration has ``positive'' probability one can use
a ``restart''-argument to construct joint regenerations. Because of
the dependence of the two walks the proof that we actually give is
somewhat more complicated than these heuristics.

\begin{lemma}[Exponential tail bounds for joint regeneration times]
  \label{lemma:jointrenewaltails}
  There exist constants $C,c \in(0, \infty)$ such that
  \begin{align}
    \Pr\big( T^{\Sim}_1 \geq k
    \,\big|\, X_0=x, X'_0=x', B_{x,x'} \big) \leq C e^{-ck},
    \quad\forall k\in \N, x,x'\in\Z^d.
  \end{align}
\end{lemma}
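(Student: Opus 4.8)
The plan is to realise $T^{\Sim}_1$ as the first success in a sequence of i.i.d.-like ``attempts'', each attempt having a duration with exponential tails and a probability of success bounded below uniformly. Concretely, I would proceed as follows. Run both individual regeneration sequences $(T_r)$ and $(T'_{r'})$ in parallel. Starting from a point where the two walks have just individually regenerated at a common real time — or, at the very start, from time $0$ — consider the following event: that the \emph{next} individual regeneration of $X$ after the current time, occurring at some level, has the property that it is \emph{also} an individual regeneration time of $X'$, i.e.\ that $X'$'s locally constructed path at that same level $n$ satisfies $\xi(\gamma_n'^{(x',0)}(n))=1$ too. The key point, analogous to the FKG bound \eqref{eq:FKG-cond} in the single-walk proof, is that conditionally on the $\widehat{\mathcal G}_0^n$-past and on the position $(w,n)$ reached by $\gamma^{(x,0)}$ and the position $(w',n)$ reached by $\gamma'^{(x',0)}$ at level $n$, the probability that \emph{both} $(w,n)\to\infty$ and $(w',n)\to\infty$ is bounded below by a constant depending only on $p$ (and $d$) — this follows from a two-site version of the FKG argument, since $\{(w,n)\to\infty\}\cap\{(w',n)\to\infty\}$ is an increasing event and one can condition on the configuration of the two descendant clusters at level $n$ just as in the single-walk case, now with two seed points.

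The second ingredient is duration control. If the attempt fails — say $\xi(\gamma_n^{(x,0)}(n))=1$ but $\xi(\gamma_n'^{(x',0)}(n))=0$, so $X$ regenerated but $X'$ did not — then the local construction for $X'$ must explore and exhaust the finite cluster hanging off $\gamma_n'^{(x',0)}(n)$ before $X'$ can produce its next regeneration; by Lemma~\ref{lem:exp-bound-perc} (the exponential tail for heights of finite oriented-percolation clusters) this takes an amount of time with exponential tails, and similarly if the roles are reversed, or if neither regenerated yet one simply continues. One then re-synchronises: after both walks have next individually regenerated, one checks whether they did so at a common real time. A clean way to package this is to let $\sigma_0 := T_1$ (the first individual regeneration of $X$), and if this is not a simultaneous regeneration, let $\sigma_{m+1}$ be the first individual regeneration time of $X$ strictly after the first individual regeneration time of $X'$ that exceeds $\sigma_m$ — or symmetrically — so that at each $\sigma_m$ one has a genuine ``attempt'' with success probability $\geq \mathrm{const}(p)>0$, conditionally on the $\widehat{\mathcal G}_0^{\sigma_m}$-past, by the two-site FKG bound. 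Since the individual regeneration increments $\tau_k,\tau'_k$ have exponential tails (Lemma~\ref{lem:regenstructure}), the gaps $\sigma_{m+1}-\sigma_m$ do as well, and the number of attempts before success is stochastically dominated by a geometric random variable; the elementary lemma that a geometric sum of exponential-tailed increments has an exponential tail then yields the claim, uniformly in $x,x'$ since nothing in the bounds depends on the starting points.

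The main obstacle I anticipate is making the ``attempt'' structure genuinely Markovian with respect to a suitable filtration, i.e.\ ensuring that at each re-synchronisation time $\sigma_m$ the conditional-probability-of-success bound holds with a constant independent of the past. The subtlety is that the two local constructions $\gamma^{(x,0)}$ and $\gamma'^{(x',0)}$ share the same $\omega$, so when one of them explores a finite dead-end cluster it reveals information about $\omega$ that the \emph{other} construction may later use; one must check that all the $\omega$-information revealed by time $\sigma_m$ lies in some $\widehat{\mathcal G}_0^{\sigma_m}$ with $\sigma_m$ a stopping time for the joint filtration, and that the relevant conditioning event (both current tips connect to infinity) still only constrains $\widehat{\mathcal G}_{\sigma_m}^{\infty}$, which is independent of $\widehat{\mathcal G}_0^{\sigma_m}$. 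This is exactly the two-walk analogue of the bookkeeping done around \eqref{e:iidc} in the single-walk proof, and the FKG step goes through because positive association holds jointly for all the $\omega(x,n)$; the extra care is purely in identifying the correct stopping times and past $\sigma$-algebras when two exploration processes interleave.
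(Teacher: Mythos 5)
Your proposal is correct and follows essentially the same route as the paper's proof: a restart argument in which attempts are made at stopping times of the joint filtration $\widehat{\mathcal G}$, the conditional success probability is bounded below via FKG because every piece of negative information ($\xi(z)=0$) used in the conditioning has by then become locally verifiable from $\omega$'s below the attempt level, the gaps between attempts have exponential tails by Lemma~\ref{lem:exp-bound-perc}, and the number of attempts is dominated by a geometric random variable. The only difference is scheduling — you re-synchronise by alternating full individual-regeneration searches, while the paper checks both tips at each attempt time (the second only if the first succeeds, so that at most one unverified negative fact exists at a time) — and the obstacle you flag is exactly the point the paper's algorithm is designed around, resolved in the same way.
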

\begin{proof}
  Let $\gamma_k =\gamma^{(x,0)}_k$ and $\gamma '_k=\gamma_k'^{(x',0)}$.
  The proof is a variant of the proof of Lemma~\ref{lem:regenstructure},
  but one should be a little bit more careful not to ``discover too many
  sites where $\xi$ is zero''. More precisely one must not check whether
  $\xi (\gamma_k(k))=1$ and $\xi (\gamma '_k(k))=1$ for all $k$. We
  proceed as follows. We first check whether $\xi (\gamma_1(1))=1$.
  If this is not the case, then we \emph{do not check} $\xi(\gamma '_1(1))$, set
  $\sigma_1=\ell(\gamma_1(1))+3$.  When $\xi (\gamma_1(1))=1$, we check
  $\xi (\gamma'_1(1))$. When it is $1$, then we are done and
  $T^{\Sim}_1=1$. When it is $0$, we set
  $\sigma_1=\ell(\gamma '_1(1))+3$.

  If we are not done, we proceed with the local construction of
  $\gamma_k $ and $\gamma_k'$, but
  do not check  any other value of $\xi$ until reaching time $\sigma_1$ (as
    it is useless, we need first ``discover locally'' the fact that one of the
    $\xi$'s we checked before was zero). At time $\sigma_1$ we have this
  information, so we check $\xi (\gamma_{\sigma_1}(\sigma_1))$ first.
  If it is zero, we set
  $\sigma_2 = \sigma_1 +\ell (\gamma_{\sigma_1}(\sigma_1))+2$. If it is one, we
  check also $\xi (\gamma'_{\sigma_1}(\sigma_1))$. When also this
  value is one, we are done, $T^{\Sim}_1=\sigma_1$. Otherwise we
  set $\sigma_2 = \sigma_1+\ell (\gamma'_{\sigma_1}(\sigma_1))+2$.
  If we are not done, we continue the local construction up to the time
  $\sigma_2$ without checking any $\xi$'s. At time
  $\sigma_2$ we check two end points of $\gamma$'s as before,
  eventually defining $\sigma_3$, etc.

  Let, similarly as before, $\widehat{\mathcal F}_k$ be the $\sigma
  $-algebra generated by $\widehat{\mathcal G}_0^k$ and all the
  additional information about $\xi $'s discovered by this algorithm
  (strictly) before time $k$. To estimate how many steps of the
  algorithm are necessary, we claim that when $\sigma_k$ is defined
  \begin{equation}
    \mathbb P\left(\xi (\gamma_{\sigma_k}(\sigma_k))=1= \xi
      (\gamma'_{\sigma_k}(\sigma_k))|
      \widehat{\mathcal F}_{\sigma_k},B_{x,x'}\right) \ge \Pr(B_0)^2.
  \end{equation}
  Indeed, this follows again by the FKG inequality, one should only
  observe that any negative information contained in the conditioning (that
    is the knowledge $\xi (z)=0$ for some $z$) is contained  in
  $\mathcal G_0^{\sigma_k}$ and can thus be removed from the conditioning,
  similarly to \eqref{eq:sigma.k+1}.

  The number of steps of the algorithm is thus dominated by a geometric
  random variable. Moreover, as the random variables
  $\sigma_i-\sigma_{i-1}$ have exponential tails, the claim of the lemma
  follows as before.
\end{proof}

We next show that $(\Xi_m)_{m\in\Z_+}$ defined in \eqref{eq:Xi_m} form a
(discrete) Markov chain.
\begin{lemma}
  Let $x, x' \in \Z^d$, $X_0=x$, $X'_0=x'$, put
  $\Xi_0\coloneqq(\alpha,\alpha',x,x')$ with arbitrary
  $\alpha, \alpha' \in \mathbb F$.
  Then under
  $\Pr(\cdot | B_{x,x'})$, $(\Xi_m)_{m\in\Z_+}$ is a (discrete)
  Markov chain, its transition probability function
  \begin{align}
    \label{e:independencealpha}
    \Psi^{\mathrm{joint}}\big( (\alpha,\alpha',x,x'),
    (\beta,\beta',y,y') \big)
    =: \Psi^{\mathrm{joint}}\big( (x,x'), (\beta,\beta',y,y') \big)
  \end{align}
  depends in its first argument only on the last two coordinates $(x,x')$,
  not on $(\alpha,\alpha')$, and has a spatial homogeneity property:
  \begin{align}
\Psi^{\mathrm{joint}}\big( (x,x'), (\beta,\beta',y,y') \big) = \Psi^{\mathrm{joint}}\big( (x+z,x'+z), (\beta,\beta',y+z,y'+z) \big).
  \end{align}
\end{lemma}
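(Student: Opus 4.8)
The plan is to establish the Markov property and the two claimed structural properties (independence of the transition function from $(\alpha,\alpha')$ and spatial homogeneity) simultaneously, by unwinding the local construction between consecutive simultaneous regeneration times and applying a strong-Markov-type argument at $T^{\Sim}_m$, exactly as in the one-walk case (see the proof of \eqref{e:condition} in Lemma~\ref{lem:regenstructure}).

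First I would set up the right filtration. Let $\widehat{\mathcal F}_k$ be the $\sigma$-algebra generated by $\widehat{\mathcal G}_0^k$ together with all the values of $\xi$ discovered by the joint exploration algorithm from the proof of Lemma~\ref{lemma:jointrenewaltails} strictly before time $k$. One checks that each simultaneous regeneration time $T^{\Sim}_m$ is a stopping time with respect to $(\widehat{\mathcal F}_k)_k$, and that $\Xi_1,\dots,\Xi_m$ are $\widehat{\mathcal F}_{T^{\Sim}_m}$-measurable. The key observation, as in the single-walk case, is that the only information about the environment $\omega$ at heights $\geq T^{\Sim}_m$ that is revealed by $\widehat{\mathcal F}_{T^{\Sim}_m}$ is the \emph{positive} information $\xi(\widehat X_m, T^{\Sim}_m) = \xi(\widehat X'_m + (\text{offset}), T^{\Sim}_m) = 1$, i.e.\ that both current endpoints lie on $\mathcal C$; all the \emph{negative} information ($\xi(z)=0$ at various sites $z$) discovered during the exploration is already contained in $\widehat{\mathcal G}_0^{T^{\Sim}_m}$, because by Lemma~\ref{l:gamma}(e) a site is declared to be off $\mathcal C$ only after its finite forward cluster has been fully explored by the $\gamma$-construction.

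Next I would prove the analogue of \eqref{e:condition}: for any event $A$ measurable with respect to $\widehat{\mathcal G}_0^\infty$ and the $\xi$'s,
\begin{equation*}
  \Pr\big( \theta_{(\widehat X_m, T^{\Sim}_m)}(A) \,\big|\, \widehat{\mathcal F}_{T^{\Sim}_m}, B_{x,x'} \big)
  = \Pr\big( A \,\big|\, B_{\widehat X_m - \widehat X_m, \widehat X'_m - \widehat X_m} \big),
\end{equation*}
where I translate so that the first walk sits at the origin. The proof is the same decomposition-and-FKG computation as in \eqref{e:iidc}: decompose the event $\{(\widehat X_m, \widehat X'_m, T^{\Sim}_m) = (z, z', n)\} \cap A'$ (for $A' \in \widehat{\mathcal F}_{T^{\Sim}_m}$, w.l.o.g.\ $A' \subset B_{x,x'}$) into a piece $A'_{(z,z',n)} \in \widehat{\mathcal G}_0^n$ intersected with $\{(z,n)\to\infty\} \cap \{(z',n)\to\infty\}$, use independence of $\widehat{\mathcal G}_0^n$ and $\widehat{\mathcal G}_n^\infty$, and use translation invariance of $\Pr$. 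Applying this with $A$ being the event that defines $\Xi_{m+1}$ — namely the collection of $\gamma$- and $\gamma'$-increments, individual and joint regeneration times, and endpoints produced by the construction started afresh from the two current positions — gives that, conditionally on $\widehat{\mathcal F}_{T^{\Sim}_m}$, the law of $\Xi_{m+1}$ depends only on $(\widehat X_m - \widehat X_m, \widehat X'_m - \widehat X_m) = (\mathbf 0, \widehat X'_m - \widehat X_m)$, hence only on $(\widehat X_m, \widehat X'_m)$ and in fact only through their difference; this is precisely the Markov property together with the stated reduction \eqref{e:independencealpha} and the spatial homogeneity. That the construction of $\Xi_{m+1}$ only sees sites at heights $\geq T^{\Sim}_m$ (so that the shift $\theta_{(\widehat X_m, T^{\Sim}_m)}$ really does express it in terms of a fresh copy of the environment) follows from Lemma~\ref{l:gamma}(c),(d) exactly as in Subsection~\ref{subsect:regentimes}.

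The main obstacle I anticipate is bookkeeping rather than anything deep: one must be careful that the exploration algorithm of Lemma~\ref{lemma:jointrenewaltails} is what actually generates $\widehat{\mathcal F}_k$, so that no superfluous negative information about high layers sneaks into the conditioning — in particular one should \emph{not} have checked $\xi(\gamma'_k(k))$ whenever $\xi(\gamma_k(k))$ already turned out to be $0$. Once the filtration is pinned down so that this holds, the FKG step removing negative information and the independence/translation-invariance computation go through verbatim from Section~\ref{sec:reg-struct}. A minor additional point is to note that the starting coordinates $(\alpha,\alpha')$ of $\Xi_0$ play no role at all: the construction of $\gamma^{(x,0)}$, $\gamma'^{(x',0)}$ and of all subsequent regeneration times uses only $(x,x')$ and the environment, never $(\alpha,\alpha')$, which is why the transition function may be written as $\Psi^{\mathrm{joint}}((x,x'),\cdot)$.
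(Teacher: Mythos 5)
Your proposal is correct and is essentially the same argument the paper has in mind: its proof of this lemma consists precisely of the remark that one adapts the computation around \eqref{e:condition}--\eqref{e:iidc}, which is what you carry out, including the key bookkeeping point (taken from the proof of Lemma~\ref{lemma:jointrenewaltails}) that negative $\xi$-information discovered by the joint exploration is already contained in $\widehat{\mathcal G}_0^{T^{\Sim}_m}$. The only point you leave implicit is that the intermediate \emph{positive} discoveries (times $\sigma_k < T^{\Sim}_m$ at which the first walk's endpoint was found on $\mathcal C$ while the second's was not) are likewise harmless: by Lemma~\ref{l:gamma}(b),(d) such a site is connected to the final endpoint $(\widehat X_m, T^{\Sim}_m)$ by a directed path that is open in the layers below $T^{\Sim}_m$, so this information is implied by $\widehat{\mathcal G}_0^{T^{\Sim}_m}$ together with the endpoint events, and your decomposition $A'\cap\{(\widehat X_m,\widehat X'_m,T^{\Sim}_m)=(z,z',n)\}=A'_{(z,z',n)}\cap\{(z,n)\to\infty\}\cap\{(z',n)\to\infty\}$ indeed holds.
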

\begin{proof}
  The proof is a straightforward adaptation of arguments given around
  \eqref{e:condition}--\eqref{e:iidc} in the proof of
  Lemma~\ref{lem:regenstructure}. Therefore we do not repeat it here.
\end{proof}

\begin{remark}
  \label{rem:Xichain}
  1.\ In particular,
  $(\widehat{X}_\ell, \widehat{X}'_\ell)_\ell$ is in itself a Markov chain
  on $\Z^d \times \Z^d$ with transition probability
  \begin{equation}
    \label{eq:PSihatjoint}
    \widehat{\Psi}^{\mathrm{joint}}\left( (x,x'), (y,y') \big)
  \coloneqq \Psi^{\mathrm{joint}}\big( (x,x'), \mathbbm F\times \mathbbm F \times
  \{(y,y')\}\right).
  \end{equation}

  2.\ The full path $(\Xi_m)_m$ contains the same amount of information
  as the pair of sequences $\big((Y_i,\tau_i)_i, (Y'_i,\tau'_i)_i\big)$ since
  these can be reconstructed from a full $\Xi$-path.
\end{remark}

\smallskip

We want to compare the joint distribution of $(X,X')$ run in the same
environment with the distribution of two walks run in two independent
copies of the environment. To this end we consider the same
construction as above, except that now we let $X'$ use $\xi'$ built on
$\omega'$, an independent copy of $\omega $. That is, the sequences
$(X_{T_{i+1}}-X_{T_i},T_{i+1}-T_i)_i$ and
$(X'_{T'_{i+1}}-X'_{T'_i},T'_{i+1}-T'_i)_i$ are now independent copies
since they use independent realisations of the medium. Obviously, then
$(\Xi_m)_{m\in\Z_+}$ as defined in \eqref{eq:Xi_m} is a Markov chain
and we denote its transition probability function by
\begin{align}
  \Psi^{\indep}\big( (\alpha,\alpha',x,x'), (\beta,\beta',y,y')
  \big)
  \eqqcolon \Psi^{\indep}\big( (x,x'), (\beta,\beta',y,y') \big).
\end{align}
It again only depends on the last two coordinates $(x,x')$, not on
$(\alpha,\alpha')$, and has the same spatial homogeneity as
$\Psi^{\mathrm{joint}}$. Under $\Psi^{\indep}$, the sequence
$(T^{\Sim}_\ell-T^{\Sim}_{\ell-1})_\ell$ is i.i.d., the law of
$T^{\Sim}_1$ does not depend on the spatial separation (it is simply
the law of the first joint renewal of two independent copies of a
renewal process whose waiting time distribution is aperiodic and has
exponential tails). Finally, similarly as in \eqref{eq:PSihatjoint} we
define
\begin{equation}
\label{eq:PSihatindep}
  \widehat{\Psi}^{\indep}\big((x,x'), (y,y') \big) \coloneqq
  \Psi^{\indep}\big((x,x'), \mathbbm F\times \mathbbm F\times\{(y,y')\} \big).
\end{equation}

The next lemma allows us to compare $\Psi^\joint$ and $\Psi^\indep$.
\begin{lemma}[Total variation distance of $\Psi^\joint$ and $\Psi^\indep$] There
  exist constants $c, C >0$ such that
  \label{lemma:couplingjointregen}
  \begin{align}
    \norm{\Psi^\joint \big((x,x'), \cdot \big)-
      \Psi^{\indep}\big((x,x'), \cdot \big)}_{\textnormal{TV}}
    \le Ce^{-c \norm{x-x'}}, \text{ for all } x,x' \in \Z^d,
  \end{align}
where $\norm{\cdot}_{\textnormal{TV}}$ denotes the total variation norm.
\end{lemma}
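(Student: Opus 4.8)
The plan is to construct an explicit coupling of the two $\Xi$-chains started from the same pair $(x,x')$: one running both walks in the \emph{same} environment, the other running $X'$ in an independent copy. The coupling should use the \emph{same} $\omega$, $\widetilde\omega$ and $\widetilde\omega'$ wherever the two constructions can be made to agree, and let them diverge only when the second walk would need to consult sites of $\omega$ that the algorithm from the proof of Lemma~\ref{lemma:jointrenewaltails} has already ``claimed'' for the first walk. Concretely, one runs the first walk $X$ (and its local construction $\gamma_k$) exactly as before; in the joint case $X'$ is built on the same $\omega$ while in the independent case it is built on a fresh $\omega'$. The key observation is that, starting from spatial separation $\norm{x-x'}$, the probability that the space-time ``exploration regions'' needed to determine $T^{\Sim}_1$ for the two walks ever intersect is at most $Ce^{-c\norm{x-x'}}$: both the temporal increments $\sigma_i-\sigma_{i-1}$ and the spatial displacements of the walks up to the first joint regeneration have exponential tails (this is exactly what Lemmas~\ref{lem:regenstructure} and \ref{lemma:jointrenewaltails} give, via $\norm{Y_i}\le\tau_i$). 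On the complementary event the two explorations use disjoint portions of the randomness, so the FKG-type argument around \eqref{e:iidc} shows that the increment $\Xi_1$ has \emph{the same conditional law} in the joint and the independent constructions. Hence the two chains can be coupled to agree on $\Xi_1$ with probability $\ge 1-Ce^{-c\norm{x-x'}}$, which is precisely the claimed total-variation bound.

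In more detail, I would first fix notation for the exploration region: let $\mathcal R$ (resp.\ $\mathcal R'$) be the (random, finite) set of space-time sites $(y,n)$ whose $\omega$-value is inspected by the algorithm of Lemma~\ref{lemma:jointrenewaltails} while determining the $\gamma$-path (resp.\ the $\gamma'$-path) up to time $T^{\Sim}_1$, together with the finitely many sites needed to certify $\xi(\gamma_{T^{\Sim}_1}(T^{\Sim}_1))=1$ and similarly for $\gamma'$ — note that certifying $\xi(\cdot)=1$ requires exhibiting an infinite open path, so one truncates: let $\mathcal R$ instead record only the sites inspected up to height $T^{\Sim}_1 + L$ for a deterministic $L$ and handle the residual ``the rest of the path survives'' event separately, as it is $\mathcal G_{T^{\Sim}_1}^\infty$-measurable and contributes the factor $\Pr(B_0)$ as in \eqref{e:iidc}. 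The main point is that $\mathcal R\subset \{(y,n): \norm{y-x}\le \tau, n\le \tau\}$ for a random $\tau$ with $\Pr(\tau>k)\le Ce^{-ck}$ uniformly in $x$, and likewise for $\mathcal R'$ around $x'$; so $\Pr(\mathcal R\cap\mathcal R'\ne\emptyset)\le \Pr(\tau+\tau'\ge\norm{x-x'})\le Ce^{-c\norm{x-x'}}$.

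The second ingredient is: conditionally on $\{\mathcal R\cap\mathcal R'=\emptyset\}$ (and more carefully, running the algorithm so that $X'$ is simply ``told'' to skip any site already in $\mathcal R$ and draw a fresh independent value there — which on the good event never happens), the joint law of $\Xi_1$ equals the law of $\Xi_1$ under $\Psi^\indep$. This is where I would reuse verbatim the conditioning/FKG computation \eqref{e:condition}--\eqref{e:iidc}: the negative information ``$\xi(z)=0$'' discovered for one walk lies in a $\sigma$-algebra of sites disjoint from the other walk's region and from the ``future'' $\to\infty$ events, so it factorises, and translation invariance gives the product form. I would phrase this as: there is a coupling $\widehat\Pr$ of $(\Xi_1^\joint,\Xi_1^\indep)$ such that $\widehat\Pr(\Xi_1^\joint=\Xi_1^\indep)\ge \Pr(\mathcal R\cap\mathcal R'=\emptyset)\ge 1-Ce^{-c\norm{x-x'}}$, and by the coupling inequality this dominates $1-\norm{\Psi^\joint((x,x'),\cdot)-\Psi^\indep((x,x'),\cdot)}_{\mathrm{TV}}$.

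The main obstacle I anticipate is the bookkeeping around certifying $\xi(\cdot)=1$: determining $T^{\Sim}_1$ genuinely requires knowing an \emph{infinite} open path exists, which is not a finite-range event, so the ``exploration region'' is not literally finite, and one cannot naively say the two regions are disjoint. The clean way around it — and the part that needs care — is to split, exactly as in \eqref{e:iidc}, the event $\{\gamma_{T^{\Sim}_1}(T^{\Sim}_1)=(z,n),\ \gamma'_{\cdot}(\cdot)=(z',n)\}$ as a finite-range event (the local construction reaching those endpoints, measurable in $\widehat{\mathcal G}_0^n$) intersected with $\{(z,n)\to\infty\}\cap\{(z',n)\to\infty\}$, and then to run FKG on the finite-range parts while the survival events either coincide (joint: $\{(z,n)\to\infty,(z',n)\to\infty\}$ on the same $\omega$) or factorise (independent). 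So the coupling must be set up on the level of these ``finite certificates'' plus a shared tail, not on the full infinite paths; once that decomposition is in place the exponential bound on $\Pr(\mathcal R\cap\mathcal R'\ne\emptyset)$ from the first paragraph does the rest. I would also remark that the uniform-in-$x,x'$ exponential tails needed here are exactly those furnished by Lemma~\ref{lemma:jointrenewaltails} and the spatial homogeneity of $\Psi^\joint$.
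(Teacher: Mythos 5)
Your overall strategy -- let the two constructions share randomness, show that the explorations needed to determine the first joint regeneration are confined to balls of exponentially tailed radius around $x$ and $x'$, and conclude that the increments can be matched outside an event of probability $Ce^{-c\norm{x-x'}}$ -- is the same locality idea that drives the paper's proof, but your central claim is not correct as stated, and it fails exactly at the point you dismiss as bookkeeping. Conditionally on the two finite exploration regions being disjoint, only the \emph{finite certificates} have the same law in the two constructions; the survival factors do not. Decomposing, as in \eqref{e:iidc}, the event $\{\Xi_1=w\}$ into a finite-range event intersected with $\{(z,n)\to\infty\}\cap\{(z',n)\to\infty\}$, the joint construction contributes the factor $\Pr\big((z,n)\to\infty,\,(z',n)\to\infty\big)$ whereas the independent construction contributes $\Pr\big((z,n)\to\infty\big)\Pr\big((z',n)\to\infty\big)$; these differ, since the two survival events are positively correlated, and the same discrepancy appears in the normalising constants $\Pr(B_{x,x'})$ versus $\Pr(B_0)^2$ when you pass from an unconditioned coupling to the conditioned kernels $\Psi^\joint$, $\Psi^\indep$. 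So ``$\Xi_1$ has the same conditional law in the joint and the independent constructions'' is false; what is true is that the mismatch is again exponentially small in $\norm{x-x'}$, but that needs a decorrelation estimate your proposal never supplies: for instance, approximate $\{(z,n)\to\infty\}$ by the event that $(z,n)$ reaches level $n+k$ with $2k<\norm{z-z'}$, so that the two approximations become independent, bound the error by $\Pr(k\le \tau^{\mathbf 0}<\infty)\le Ce^{-\gamma k}$ via Lemma~\ref{lem:exp-bound-perc}, and use that at the first joint regeneration the endpoints are still at distance at least $\norm{x-x'}/2$ except on an exponentially small event. You would also still need the final step converting the coupling bound into a bound on the \emph{conditional} kernels, dividing by probabilities bounded below by $p_\infty^2$ as in \eqref{eq:14}; your coupling inequality as written conflates the unconditioned disjointness probability with agreement of the conditioned laws.

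The paper sidesteps the decorrelation issue entirely by a different coupling device: it glues two independent copies $\Omega_1,\Omega_2$ along the hyperplane midway between $x$ and $x'$ to form $\Omega_3$, runs the ``joint'' pair on $\Omega_3$ and the ``independent'' pair on $(\Omega_1,\Omega_2)$, and notes that the two outcomes (including the $\Delta$-outcomes) can differ only if some cluster started at $x$ or $x'$ survives more than $\lfloor\norm{x-x'}/2\rfloor$ steps and then dies, or if a joint regeneration takes longer than $\lfloor\norm{x-x'}/2\rfloor$ -- events controlled by Lemma~\ref{lem:exp-bound-perc} and Lemma~\ref{lemma:jointrenewaltails}. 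In particular the non-local survival events are matched automatically on the good event (a cluster dying within $\lfloor\norm{x-x'}/2\rfloor$ steps does so identically in $\Omega_i$ and $\Omega_3$), so no correlation-decay input is required, and the passage to the conditioned kernels is done explicitly in the Valesin-style computation at the end. Your route can be completed, but only after adding the decorrelation estimate and the normalisation step described above.
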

\begin{proof}
  The proof is an adaptation of the proof of Lemma~3.2(i) in
  \citep{Valesin:2010}. We give it in the case $x \in \Z^d$ of the form
  $x=(x_1,0,\dots,0)$ for some positive $x_1$ and $x'=\mathbf 0$. The general
  case has more complicated notation but the same arguments.

  Let
  \begin{align}
    \label{eq:6}
    \Omega_i=\left\{\left(\omega_i(z,n),\widetilde\omega_i(z,n)\right) :
      (z,n)\in \Z^d \times \Z_+\right\},
    \quad i=1,2
  \end{align}
  be two independent families of independent collections of random
  variables, where the random variables $\omega_{i}(z,n)$ are i.i.d.\
  Bernoulli distributed with parameter $p>p_c$ (where $p_c$ is the
  critical parameter for oriented percolation) and the random
  variables $\widetilde\omega_i(z,n)$ are i.i.d.~random permutations
  of the sets $U(z,n)$ (defined in \eqref{Udef}).

  Furthermore define
  $\Omega_3 = \left\{\left(\omega_3(z,n),\widetilde \omega_3(z,n)\right):
    (z,n)\in \Z^d\times \Z\right\}$ by setting for $z =(z_1,\dots,z_d)$
  \begin{align*}
    \left(\omega_3(z,n),\widetilde \omega_3(z,n)\right) \coloneqq
    \begin{cases}
      (\omega_1(z,n),\widetilde \omega_1(z,n)) & :  z_1 \le x_1/2, \\
      (\omega_2(z,n),\widetilde \omega_2(z,n))  & :  z_1 > x_1/2.
    \end{cases}
  \end{align*}
  Then of course $\Omega_1, \Omega_2$ and $\Omega_3$ have the same
  distribution and on each of the families we can define the random walks
  by the local construction of Section~\ref{sec:reg-struct}. To distinguish
  these walks throughout the proof we will need to denote several
  variables that we introduced earlier as functions of the $\Omega_i$'s.
  In particular we write
  \begin{align*}
    \xi_n(x;\Omega_i) & \text{ for $\xi_n(x)$ constructed using
      $\Omega_i$},\\
    \ell(x,n;\Omega_i) & \text{ for $\ell(x,n)$ constructed using $\Omega_i$},
    \\
    \gamma^{(x,n)}_k
    (m;\Omega_i) & \text{ for the path $\gamma^{(x,n)}_k(m)$ obtained
      using $\Omega_i$}.
  \end{align*}
  Furthermore for $i,j \in \{1,2,3\}$ (we will consider the two cases $i=j=3$ or
    $i=1$ and $j=2$) we set
  \begin{align*}
    &B_{x,x'} (\Omega_i,\Omega_j)  \coloneqq \{\xi_0(x;\Omega_i) = 1 =
      \xi_0(x';\Omega_j)\},
    \\
    &T^{\Sim}_{i,j}  \coloneqq T^{\Sim}(\Omega_i,\Omega_j)   : =
    \inf\bigl\{n \ge 1: \xi_n\bigl(\gamma^{(x,n)}_n (n;\Omega_i);\Omega_i\bigr) =
      \xi_n\bigl(\gamma^{(x',n)}_n(n;\Omega_j);\Omega_j\bigr) =1  \bigr\}.
  \end{align*}
  Note that on $B_{x,x'} (\Omega_3,\Omega_3)$ the regeneration time
  $T^{\Sim}_{3,3}$ is the simultaneous regeneration time $T_1^{\Sim}$
  defined as in \eqref{eq:12} using $\Omega_3$ and $T_{1,2}^{\Sim}$ is
  the first simultaneous regeneration time of two independent walks
  defined on $\Omega_1$ and $\Omega_2$. In keeping with
  (\ref{eq:Jmdef}--\ref{eq:Jm'def}) we will write $J_1(\Omega_i,
  \Omega_j)$ and $J_1'(\Omega_i, \Omega_j)$ for the number of
  individual renewals until the first joint renewal of the first,
  respectively, the second walk when the first walk uses $\Omega_i$
  and the second $\Omega_j$.

  Note also that there
  are constants $c,C \in (0, \infty)$ such that, $\forall i,j \in \{1,2,3\}$,
  \begin{align} \label{eq:13}
    \Pr\left(T^{\Sim}_{i,j} > r | B_{x,x'}(\Omega_i,\Omega_j) \right)
    \le C e^{-c r}.
  \end{align}
  For $i=j=3$ this assertion was shown in Lemma~\ref{lemma:jointrenewaltails}.
  For $i=1$ and $j=2$ the inequality is true since the individual regeneration
  times of two independent random walks are aperiodic and by
  Lemma~\ref{lem:regenstructure} have exponentially decaying tails.

  Recall the definition of $\Xi_m$ in \eqref{eq:Xi_m} and define for $i,j \in
  \{1,2,3\}$
  \begin{align*}
    \Xi_1(\Omega_i,\Omega_j)\coloneqq \big(
    ( Y_k(\Omega_i), \tau_k(\Omega_i) )_{k=1}^{J_1(\Omega_i, \Omega_j)},
    (Y_k'(\Omega_j), \tau_k'(\Omega_j) )_{k=1}^{J_1'(\Omega_i, \Omega_j)},
    X_{T^{\Sim}_{i,j}}(\Omega_i),
    X'_{T^{\Sim}_{i,j}}(\Omega_j) \big).
  \end{align*}
  Furthermore define, with some cemetery state $\Delta$,
  \begin{align*}
    \Xi^\joint_{x,x'}\coloneqq
  \begin{cases}
    \Xi_1(\Omega_3,\Omega_3), &  \text{ if }
    \xi_0(x;\Omega_3)=\xi_0(x';\Omega_3)=1,
    \\
    \Delta, &  \text{ otherwise, }
  \end{cases} \\
  \intertext{and} \Xi^{\indep}_{x,x'} \coloneqq
  \begin{cases}
    \Xi_1(\Omega_1,\Omega_2), &  \text{ if }
    \xi_0(x;\Omega_1)=\xi_0(x';\Omega_2)=1,
    \\
    \Delta, &  \text{ otherwise. }
  \end{cases}
\end{align*}
Recall that we are considering the supercritical case $p>p_c$, hence
the percolation probability $p_\infty\coloneqq\Pr(\xi_0(\boldsymbol
0)=1)$ is strictly positive. Because of positive correlations, we have
\begin{align}\label{eq:14}
  \Pr\left(\Xi^\joint_{x,x'} \ne \Delta\right), \Pr\left(\Xi^\indep_{x,x'} \ne
    \Delta\right) \ge p_\infty^2
\end{align}
uniformly in $x,x'$. Furthermore we have
\begin{align*}
  \Psi^\joint\bigl((x,x'),\cdot\bigr) & = \Pr\left(\Xi^\joint_{x,x'} = \cdot\, | \,
  \Xi^\joint_{x,x'} \ne \Delta\right)\\ \intertext{and}
  \Psi^\indep\bigl((x,x'),\cdot\bigr) & = \Pr\left(\Xi^\indep_{x,x'} = \cdot\, | \,
  \Xi^\indep_{x,x'} \ne \Delta\right).
\end{align*}

Define $n^*=[x_1/2]$ and set
\begin{align*}
  L_1 & = \{\ell(x,0;\Omega_1) \vee \ell(x',0;\Omega_2) \le n^* \} \\
  L_2 & = \{\xi_0(x;\Omega_1)=\xi_0(x';\Omega_2)=\xi_0(x;\Omega_3) =
  \xi_0(x';\Omega_3)=1, T_{3,3}^{\Sim} \le n^*,
  T_{1,2}^{\Sim} \le n^* \}.
\end{align*}
By definition of $\Omega_3$ and $n^*$ we have
\begin{align} \label{eq:4}
  \gamma^{(x',0)}_n(n;\Omega_1)=\gamma^{(x',0)}_n (n;\Omega_3)\; \text{and} \;
  \gamma^{(x,0)}_n(n;\Omega_2)=\gamma^{(x,0)}_n (n;\Omega_3)\; \text{ for
    all } n \in
  \{0,\dots, n^*\}.
\end{align}
Furthermore on $L_1 \cup L_2$ we have $\Xi^\joint_{x,x'}=\Xi^\indep_{x,x'}$. To
see this note that on $L_1$ we have
$\Xi^\joint_{x,x'}=\Xi^\indep_{x,x'}=\Delta$. On $L_2$ we have
$T_{1,2}^{\Sim}= T_{3,3}^{\Sim}$ and since this is smaller than
$n^*$ we obtain by \eqref{eq:4} that $\Xi_{x,x'}^\joint=\Xi_{x,x'}^\indep$.

The complement of $L_1 \cup L_2$ is contained in the union of the events
\begin{align*}
  \{n^* < \ell(x,0;\Omega_1) <\infty\},   \{n^* < \ell(x,0;\Omega_3)
  <\infty\}, \\
  \{n^* < \ell(x',0;\Omega_2) <\infty\},   \{n^* < \ell(x',0;\Omega_3)
  <\infty\},  \\
  \{\xi_0(x;\Omega_1)=\xi_0(x';\Omega_2)=1, T_{1,2}^{\Sim}>n^*\}, \\
  \{\xi_0(x;\Omega_3)=\xi_0(x';\Omega_3)=1, T_{3,3}^{\Sim}>n^*\},
\end{align*}
each of which is exponentially decreasing in $\norm{x-x'}=x_1$. For
the events in the first two lines this follows by
Lemma~\ref{lem:exp-bound-perc}, whereas for the events in the last two
lines this is a consequence of \eqref{eq:13}. Thus, there are $c, C
\in (0,\infty)$ such that
\begin{align*}
  \sum_{w \in \mathcal W \cup \{\Delta\} } \Abs{\Pr\left(\Xi_{x,x'}^\joint= w \right)-
    \Pr\left(\Xi_{x,x'}^\indep= w \right)} =
  \Pr\left(\Xi_{x,x'}^\joint \ne \Xi_{x,x'}^\indep\right) \le C e^{-c
    x_1},
\end{align*}
where $\mathcal W\coloneqq \mathbbm F \times \mathbbm  F \times\Z^d \times
\Z^d$.
Now, as in \citet{Valesin:2010} in the display after (3.9) on p.~2236, it
follows that
\begin{align*}
  \norm{\Psi^\joint \big((x,x'), \cdot \big) & - \Psi^{\indep}\big((x,x'), \cdot
    \big)}_{\textnormal{TV}} \\ & = \frac12 \sum_{w \in \mathcal W } \Abs{
    \frac{\Pr\left(\Xi^\joint_{x,x'} = w\right)}{\Pr\left(\Xi^\joint_{x,x'} \ne
        \Delta\right)} - \frac{\Pr\left(\Xi^\indep_{x,x'} =
        w\right)}{\Pr\left(\Xi^\indep_{x,x'} \ne
        \Delta\right)} } \\
  & \le \frac1{2\Pr\left(\Xi^\joint_{x,x'} \ne \Delta\right)} \sum_{w \in
    \mathcal W } \Abs{\Pr\left(\Xi^\joint_{x,x'} = w\right) -
    \Pr\left(\Xi^\indep_{x,x'} = w\right)} \\
  & \qquad \qquad + \frac12 \Abs{\frac1{\Pr\left(\Xi^\joint_{x,x'} \ne
        \Delta\right)}- \frac1{\Pr\left(\Xi^\indep_{x,x'} \ne \Delta\right)}}
  \sum_{w \in \mathcal W } \Pr\left(\Xi^\indep_{x,x'} =
    w\right) \\
  & \le \frac{Ce^{-c \norm{x-x'}}}{\Pr\left(\Xi^\joint_{x,x'} \ne \Delta\right)}
  + \frac12 \frac{\Abs{\Pr\left(\Xi^\indep_{x,x'} \ne \Delta\right) -
      \Pr\left(\Xi^\joint_{x,x'} \ne \Delta\right)}}{\Pr\left(\Xi^\joint_{x,x'}
      \ne \Delta\right) \Pr\left(\Xi^\indep_{x,x'} \ne \Delta\right)}
  \\
    & \le \frac{Ce^{-c \norm{x-x'}}}{\Pr\left(\Xi^\joint_{x,x'} \ne \Delta\right)}
  + \frac{Ce^{-c \norm{x-x'}}}{\Pr\left(\Xi^\joint_{x,x'}
      \ne \Delta\right) \Pr\left(\Xi^\indep_{x,x'} \ne \Delta\right)}.
\end{align*}
By \eqref{eq:14} both denominators in the last line of the above
display are bounded away from zero uniformly in $x,x'$. Thus, for
suitably chosen $C,c$ the assertion of the lemma follows in the case
$x=(x_1,0,\dots,0)$.
\end{proof}

\smallskip

\subsection{Coupling of $\Psi^\joint$ and $\Psi^\indep$}
\label{sec:coupling}

The following lemma is our ``target lemma'', which forms the core of
the proof of Theorem~\ref{thm:quenchedCLT}.
\begin{lemma}
  \label{lemma:qCLTars}
  There exists $b>0$ and a non-trivial centred $d$-dimensional
  normal law $\widetilde{\Phi}$ such that for $f : \R^d\to\R$
  bounded and Lipschitz we have
  \begin{align}
    \label{eq:L2regenCLT}
    \E\left[ \Big( E_\omega\bigl[f\big(\widetilde{X}_m/\sqrt{m}\big)\bigr]
      -\widetilde{\Phi}(f) \Big)^2 \right] \leq C_f \, m^{-b}.
  \end{align}
\end{lemma}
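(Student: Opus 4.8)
The plan is to square out, reduce to a comparison of two walks on the \emph{same} versus on \emph{independent} realisations of the cluster, and control this comparison using the total-variation bound of Lemma~\ref{lemma:couplingjointregen}.

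First I would record the elementary identity: writing $g=f(\widetilde X_m/\sqrt m)$ and $g'=f(\widetilde X'_m/\sqrt m)$, and using that given $\omega$ the walks $X,X'$ are i.i.d.\ (so $E_\omega[g]^2=E_\omega[gg']$) while under $\Psi^\indep$ they live on independent clusters (so $\E^\indep[gg']=(\E[g])^2$),
\begin{equation*}
  \E\big[(E_\omega[g]-\widetilde\Phi(f))^2\big]
  =\big(\E^\joint[gg']-\E^\indep[gg']\big)+\big(\E[g]-\widetilde\Phi(f)\big)^2.
\end{equation*}
Here $\widetilde\Phi$ is the centred normal law with covariance $\E[Y_1Y_1^\top\mid B_0]$, which is non-degenerate by the non-degeneracy of $Y_1$ established in the proof of Theorem~\ref{thm:qLLN-aCLT}; since $\widetilde X_m=\sum_{i=1}^m Y_i$ is an i.i.d.\ sum with exponential moments (Lemma~\ref{lem:regenstructure}), the multivariate Berry--Esseen theorem for Lipschitz test functions gives $\abs{\E[g]-\widetilde\Phi(f)}\le C_f m^{-1/2}$, so the second term is $O(m^{-1})$. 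It thus remains to bound $\abs{\E^\joint[gg']-\E^\indep[gg']}$.

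For this I would couple the chains $(\Xi_\ell^\joint)_\ell$ and $(\Xi_\ell^\indep)_\ell$, both started from $(\mathbf 0,\mathbf 0)$, step by step: at the $\ell$-th joint regeneration, given the chains have agreed so far and the walks are at spatial separation $\mathcal D_{\ell-1}$, Lemma~\ref{lemma:couplingjointregen} together with spatial homogeneity of $\Psi^\joint,\Psi^\indep$ lets us couple the next $\Xi$-increment to disagree with conditional probability at most $Ce^{-c\norm{\mathcal D_{\ell-1}}}$. Since $J_\ell\ge\ell$, the increments $Y_1,\dots,Y_m$ (hence $\widetilde X_m$, and likewise $\widetilde X'_m$) are measurable functions of the first $\le m$ joint regeneration steps, so the discrepancy $\delta_m:=\widetilde X^\joint_m-\widetilde X^\indep_m$ (and its primed analogue $\delta'_m$) changes only at the ``disagreement steps'' $\ell\le m$, each contributing an increment bounded by a joint-regeneration increment, hence $O(1)$ with exponential tail. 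Using $\abs{g^\joint g'^\joint-g^\indep g'^\indep}\le \norm{f}_\infty L\,(\norm{\delta_m}+\norm{\delta'_m})/\sqrt m$ (with $L$ the Lipschitz constant of $f$), the problem reduces to showing $\E\norm{\delta_m}=o(\sqrt m)$, in fact $\E\norm{\delta_m}\le Cm^{1/4}$.

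To bound $\E\norm{\delta_m}$ I would use a martingale/drift decomposition $\delta_m=M_m+A_m$, where $M_m$ collects the compensated disagreement increments and $A_m=\sum_{\ell\le m}\E[\Delta\delta_\ell\mid\FF_{\ell-1}]$. Two inputs are needed. (i) \emph{Diffusive spreading of the separation}: for $\norm{\mathcal D_\ell}$ large $(\mathcal D_\ell)_\ell$ is close in law to the difference of two independent copies of the regeneration walk, a mean-zero random walk with exponential moments; since a width-one channel in $\CC$ has exponentially small probability, $(\mathcal D_\ell)$ cannot linger near $0$, and a local-CLT/Green's-function estimate gives $\sum_{\ell=1}^m\E[e^{-c\norm{\mathcal D_{\ell-1}}}]\le C(1\vee\log m\vee m^{(2-d)/2})\le C\sqrt m$; combined with the exponential tail of the per-step increments this yields $\E\norm{M_m}^2\le C\sqrt m$. (ii) \emph{Cancellation in the drift}: by the spatial symmetry of the whole construction the conditional drift equals $d(\mathcal D_{\ell-1})$ for an \emph{odd} function $d$ on $\Z^d$ with $\norm{d(v)}\le Ce^{-c\norm v}$ (smallness of $d$ following from the total-variation bound together with the exponential tails), so $\sum_v d(v)=0$; hence $A_m$ is an additive functional with vanishing mean of the symmetric diffusive walk $\mathcal D$, and a standard local-time fluctuation estimate gives $\E\norm{A_m}\le Cm^{1/4}$. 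Altogether $\E\norm{\delta_m}\le Cm^{1/4}$, so $\abs{\E^\joint[gg']-\E^\indep[gg']}\le C_f m^{-1/4}$ and the lemma follows with, e.g., $b=1/4$.

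The main obstacle is precisely steps (i)--(ii): keeping the coupling of the two $\Xi$-chains alive over order-$m$ regeneration steps and quantifying how often the two walks on the same cluster meet. The recurrent dimensions $d=1,2$ are the delicate case (the walks meet of order $\sqrt m$, respectively $\log m$, times), and it is essential that each meeting perturbs $\widetilde X_m$ only by a \emph{centred} $O(1)$ amount, so that $\norm{\delta_m}$ grows like the square root of the number of meetings rather than linearly; this is also what forces the exponent $b$ to be small.
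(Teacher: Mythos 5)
Your overall skeleton is sound and, for $d\ge 2$, is essentially the paper's: the exact identity $\E\big[(E_\omega[g]-\widetilde\Phi(f))^2\big]=\big(\E^\joint[gg']-\E^\indep[gg']\big)+\big(\E[g]-\widetilde\Phi(f)\big)^2$ is the Bolthausen--Sznitman reduction, the second term is handled by Berry--Esseen for the i.i.d.\ regeneration increments, and the first term is attacked by coupling the $\Xi^\joint$- and $\Xi^\indep$-chains via Lemma~\ref{lemma:couplingjointregen} (using, as you do, that under $\Psi^\indep$ the increment law does not depend on the positions). Two remarks even at this level: the bookkeeping of $\widetilde X^\joint_m-\widetilde X^\indep_m$ is slightly more delicate than ``the discrepancy changes only at disagreement steps'', because a disagreeing block may contain different numbers of \emph{individual} regenerations in the two chains, so the indices of the $Y_i$ shift; this is what the order-preserving bijection $\phi$ and the four correction sums in \eqref{e:crazydec} of Lemma~\ref{lemma:couplerrord3lip} take care of (repairable, but not free). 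More importantly, your input (i) already presupposes that the separation process $\mathcal D_\ell$ under $\Pr^\joint$ does not linger near $0$; near $0$ the total-variation bound is vacuous and $\mathcal D$ is \emph{not} comparable to a random walk, so the occupation bound $\sum_{\ell\le m}\E[e^{-c\norm{\mathcal D_{\ell-1}}}]\lesssim\sqrt m$ (resp.\ $\log m$, $O(1)$) is not a consequence of a local CLT; it requires the corridor construction and the stretched-exponential re-separation bound of Lemma~\ref{lemma:separation1}, which only gives a per-visit sojourn of order $n^{b_2}$, not $O(1)$ (cf.\ the bound $\E[R_n^{3/2}]\le c_Rn^{1+\delta_R}$ in Lemma~\ref{lem:RnandCnbds}); your exponents would change, though the martingale part would survive with some $b>0$.

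The genuine gap is step (ii) in $d=1$. You treat $A_m=\sum_\ell d(\mathcal D_{\ell-1})$ as an additive functional of ``the symmetric diffusive walk $\mathcal D$'' and invoke a standard local-time fluctuation estimate to get $\E\norm{A_m}\lesssim m^{1/4}$. But the cancellation you need is not a symmetry statement alone: global reflection invariance only gives that $d$ is odd; to conclude that successive visits to the region where $d$ is non-negligible (the ``black box'' $\{\abs{\mathcal D}\lesssim K\log n\}$) contribute with nearly independent, sign-balanced terms, you must show that the \emph{sign} of $\mathcal D$ decorrelates across black-box sojourns, i.e.\ that the two walks actually swap their order with probability bounded below before re-separating. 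Inside the black box the transition mechanism of $\mathcal D$ is exactly what is not understood (no coupling, no ellipticity), so this is not a ``standard'' random-walk fact; it is the content of the overcrossing part of Lemma~\ref{lem:seplemd=1} (\eqref{eq:Wntransbd}), whose proof needs the corridor/gambler's-ruin argument with the tuning $c'\varepsilon<b$. Only with this mixing of the type chain $(W_{n,i})$, the symmetries \eqref{eq:Wnjsymmetries}--\eqref{eq:Dnjsymmetries}, and a mixingale maximal inequality does one obtain the analogue of your drift bound (this is \eqref{eq:Cnsmall} and the surrounding argument), and even then the paper runs the martingale decomposition for $(\widehat X,\widehat X')$ along \emph{joint} regenerations, applies the quantitative martingale CLT of Ra\v{c}kauskas (Lemma~\ref{lem:1djointCLTXhat}), and transfers back to $\widetilde X$ via Lemma~\ref{lem:jointrenewalcounts}. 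So your plan identifies the right obstacle in $d=1$ but asserts precisely the two quantitative inputs (occupation time of the black box, sign-decorrelation of the drift) that constitute the hard part of the proof; as written, the $d=1$ case does not go through.
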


We will show this lemma by coupling two Markov chains with
transition matrices $\Psi^\joint$ and $\Psi^\indep$, using
Lemma~\ref{lemma:couplingjointregen}. We need a few technical lemmata beforehand.
The first one gives standard estimates on exit distribution from an annulus.

\begin{lemma}
  \label{lemma:harmonicballsd3}
  Write for $r>0$
  \begin{equation}
    \label{eq:hittingtimedistouter}
    \begin{split}
      h(r) &\coloneqq \inf\{ k \in \Z_+ : \norm{\widehat X_k -\widehat X'_k}_2 \leq r\},\\
      H(r) &\coloneqq \inf\{ k \in \Z_+ : \norm{\widehat X_k -\widehat X'_k}_2 \geq r\},
    \end{split}
  \end{equation}
  where $\norm{\cdot}_2$ denotes the Euclidean norm on $\Z^d$,
  and set for $r_1 < r < r_2$
  \begin{equation}
    f_d(r;r_1,r_2)=
    \begin{cases}
      \frac{r_1^{2-d}-r^{2-d}}{r_1^{2-d}-r_2^{2-d}},
      &\text{when $d\ge 3$},\\[1.2ex]
      \frac{\log r - \log r_1}{\log r_2 - \log r_1},
      &\text{when $d=2$}, \\[1.2ex]
      \frac{r-r_1}{r_2-r_1},
      &\text{when $d=1$}.
    \end{cases}
  \end{equation}
  For every $\varepsilon >0$ there are (large) $R$ and $\widetilde R$
  such that for all $ r_2 > r_1 > R$ with $r_2-r_1 > \widetilde R$
  and $x,y\in \mathbb Z^d$ satisfying
  $r_1<r=\norm{x-y}_2<r_2$
  \begin{equation}
    \label{eq:harmonicballsd3}
    (1-\varepsilon)f_d(r;r_1,r_2)\le
    \Pr^\indep_{x,y}\big(H(r_2)  < h(r_1) \big) \le
    (1+\varepsilon)f_d(r;r_1,r_2).
  \end{equation}
\end{lemma}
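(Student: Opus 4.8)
The plan is to first recognise that, under $\Pr^\indep$, the difference process $D_\ell\coloneqq\widehat X_\ell-\widehat X'_\ell$ is an ordinary random walk on $\Z^d$ with i.i.d.\ increments. Indeed, under $\Pr^\indep$ the two walks evolve in independent environments, so conditionally on the numbers $N_\ell\coloneqq J_{\ell+1}-J_\ell$ and $N'_\ell\coloneqq J'_{\ell+1}-J'_\ell$ of individual renewals inside the $\ell$-th joint--renewal cycle, the increments $\widehat X_{\ell+1}-\widehat X_\ell=\sum_{k=J_\ell+1}^{J_{\ell+1}}Y_k$ and $\widehat X'_{\ell+1}-\widehat X'_\ell=\sum_{k=J'_\ell+1}^{J'_{\ell+1}}Y'_k$ are independent sums of i.i.d.\ copies of $Y_1$, resp.\ $Y'_1$; since $(N_\ell,N'_\ell)_\ell$ is itself i.i.d.\ by the joint--renewal structure, so is $(D_{\ell+1}-D_\ell)_\ell$. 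Using $\E[Y_1\mid B_0]=\mathbf 0$, the symmetry and isotropy of the law of $Y_1$ (Lemma~\ref{lem:regenstructure} together with the proof of Theorem~\ref{thm:qLLN-aCLT}), and the exponential tails of $Y_1$ and of the $N_\ell$ (Lemmas~\ref{lem:regenstructure} and~\ref{lemma:jointrenewaltails}), one obtains that the increment law of $D$ is symmetric, centred, has covariance $\bar\sigma^2 I$ for some $\bar\sigma^2\in(0,\infty)$, has exponential moments, and has full support on $\Z^d$. This reduces \eqref{eq:harmonicballsd3} to the standard harmonic--measure estimate for an annulus for such a walk, which I would prove as follows.

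Let $\phi_d$ be the radial function harmonic for the Laplacian off the origin: $\phi_d(z)=\norm{z}_2^{2-d}$ if $d\ge3$, $\phi_d(z)=\log\norm{z}_2$ if $d=2$, $\phi_d(z)=\norm{z}_2$ if $d=1$; then $f_d(r;r_1,r_2)$ is exactly the $\phi_d$-harmonic interpolation $\bigl(\phi_d(r_1)-\phi_d(r)\bigr)/\bigl(\phi_d(r_1)-\phi_d(r_2)\bigr)$. The point is that $\phi_d$ is \emph{approximately} harmonic for $D$ on $\{\norm{z}_2\ge R\}$: a second--order Taylor expansion, whose first--order term vanishes by centredness and whose second--order term vanishes because the covariance is a multiple of the identity and $\Delta\phi_d=0$, leaves (after truncating the exponentially light large jumps) a defect bounded by $C\norm{z}_2^{-(d+1)}$. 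With $\sigma\coloneqq h(r_1)\wedge H(r_2)$, which is a.s.\ finite, optional stopping applied to $\phi_d(D_{k\wedge\sigma})$ would give
\begin{equation*}
  \phi_d(D_0)=\E^\indep_{x,y}\bigl[\phi_d(D_\sigma)\bigr]
   -\E^\indep_{x,y}\Bigl[\textstyle\sum_{k=0}^{\sigma-1}\bigl(\E[\phi_d(D_{k+1})\mid D_k]-\phi_d(D_k)\bigr)\Bigr].
\end{equation*}
I would bound the defect sum uniformly in $r_2$ by a term of order $r_1^{1-d}$ (order $r_1^{-1}$ if $d=2$, and zero if $d=1$ since then $\phi_1$ is exactly harmonic away from the origin): for $d\ge3$ the function $\norm{z}_2^{1-d}$ is approximately \emph{sub}harmonic for $D$ with defect again of order $\norm{z}_2^{-(d+1)}$ and is bounded by $r_1^{1-d}$ on the annulus, so $\E^\indep_{x,y}[\sum_{k<\sigma}\norm{D_k}_2^{-(d+1)}]\le Cr_1^{1-d}$ --- equivalently this is a Green's--function estimate for the walk killed on leaving the annulus. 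For $\E^\indep_{x,y}[\phi_d(D_\sigma)]$ I would write it as $\phi_d(r_1)\Pr^\indep_{x,y}(h(r_1)<H(r_2))+\phi_d(r_2)\Pr^\indep_{x,y}(H(r_2)<h(r_1))$ plus an overshoot error: the overshoot of $\norm{\cdot}_2$ past $r_1$ or $r_2$ is dominated by a single jump and hence has uniformly bounded expectation, contributing an error of order $\phi_d'(r_1)\cdot O(1)$, again of the size above. Solving the resulting linear relation for $\Pr^\indep_{x,y}(H(r_2)<h(r_1))$ then recovers $f_d(r;r_1,r_2)$ up to an additive error of order $(\text{errors above})/\bigl(\phi_d(r_1)-\phi_d(r_2)\bigr)$; taking $\widetilde R$ large makes $\phi_d(r_1)-\phi_d(r_2)$ dominate those errors and taking $R$ large validates the approximate--harmonicity bounds, which is the content of \eqref{eq:harmonicballsd3}.

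The step I expect to be the main obstacle is establishing the uniformity in $r_2$ of the accumulated harmonicity defect. The naive bound $\E^\indep_{x,y}[\sigma]\cdot\max_{\text{annulus}}(\text{defect})$ is far too crude --- since $\E^\indep_{x,y}[\sigma]$ can be of order $r_2^2$ it would only give the estimate in the restricted range $r_2\ll r_1^{3/2}$ --- and the resolution is precisely the approximately sub/superharmonic (Green's--function) estimate above, which does not see $r_2$ because the walk, on its rare excursions out to radius $r_2$, spends its time where the defect is negligible. The second delicate point, inherent to a non--nearest--neighbour walk, is the overshoot of the boundary spheres: one has to keep the $O(1)$ overshoot errors small relative to $f_d(r;r_1,r_2)$ across the whole annulus, which is what forces $R,\widetilde R$ to be chosen large (depending on $\varepsilon$). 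Everything else is the routine potential theory of random walks with a finite second moment, and the one--dimensional case is the simplest since $\phi_1$ is then genuinely harmonic.
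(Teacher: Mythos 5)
Your proposal is correct in substance, but it proves the lemma by a genuinely different route than the paper. The first step --- observing that under $\Pr^\indep$ the difference $\widehat X_\ell-\widehat X'_\ell$ is a random walk with i.i.d.\ symmetric, centred increments having exponential moments and, by the lattice symmetries, isotropic covariance --- is exactly the paper's reduction. From there the paper is soft: it simply invokes the invariance principle together with the exit probabilities of $d$-dimensional Brownian motion from an annulus \citep[Thm.~3.18]{MoertersPeres2010}, leaving the (routine but not entirely trivial) uniformity in $r_1,r_2,r$ implicit. You instead run the classical quantitative potential-theory argument: approximate harmonicity of $\phi_d$ away from the origin (first- and second-order Taylor terms vanish by centredness, isotropy and $\Delta\phi_d=0$), optional stopping at $\sigma=h(r_1)\wedge H(r_2)$, a Green's-function bound showing the accumulated defect is $O(r_1^{1-d})$ uniformly in $r_2$ (your identification of this as the crux is right --- the naive bound via $\E[\sigma]$ would not be uniform, and your shell-by-shell occupation estimate does give $C r_1^{1-d}$ for $d\ge3$), plus an overshoot estimate of the same order. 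This buys explicit error rates and makes the uniformity over scales transparent, at the cost of two technical chores you only gesture at: $\phi_d$ must be capped inside, say, $\{\norm{z}_2\le r_1/2\}$ so that $\E[\phi_d(D_\sigma)]$ is finite despite possible overshoot towards the origin (the exponential tails make this correction negligible), and in $d=2$ the defect bound needs the Green's function of the walk killed on entering the inner ball rather than a free Green's function. One caveat worth recording: converting your additive error of order $r_1^{1-d}/\bigl(\phi_d(r_1)-\phi_d(r_2)\bigr)$ into the multiplicative bound \eqref{eq:harmonicballsd3} requires $r-r_1$ to exceed a constant depending on $\varepsilon$, since $f_d(r;r_1,r_2)\to0$ as $r\downarrow r_1$ while the walk's exit probability, being bounded below by a constant times $1/\min(r_1,r_2-r_1)$ after one step, does not; this limitation is shared by the invariance-principle argument (and, strictly speaking, by the statement itself), and is harmless in the paper's applications, where $r\ge 2r_1$.
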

\begin{remark}
  In the rest of the paper, we use for simplicity the $\sup$-norm
  $\norm{x}=\max_{1 \le i \le d} |x_i|$ instead of the Euclidean norm
  $\norm{x}_2$. Since all norms on $\R^d$ are equivalent, we
  can and shall translate between them by an appropriate adjustments
  of constants. We will assume this implicitly below when applying
  Lemma~\ref{lemma:harmonicballsd3}.
\end{remark}
\begin{proof}[Proof of Lemma~\ref{lemma:harmonicballsd3}]
  From the construction of the transition probability $\Psi^{\indep}$
  it follows that under $\Pr^\indep$ the Markov chain $(\widehat
  X_n-\widehat X'_n)_n$ is a random walk on $\mathbb Z^d$ with
  i.i.d.~increments whose distribution is symmetric (and thus centred)
  and has a finite variance. The claim is then a direct consequence of
  the usual invariance principle and exit probabilities from an
  annulus by a $d$-dimensional Brownian motion \citep[see e.g.\
  ][Thm.~3.18]{MoertersPeres2010}.
 \end{proof}

We  use $\Pr^\joint_{x,y}$ to denote the distribution of
the canonical Markov chain $\Xi$ with transition probabilities
$\Psi^\joint$ started from $\Xi_0 = (\alpha ,\alpha ',x,y)$. Note that by
\eqref{e:independencealpha} this distribution does not depend on
$\alpha, \alpha '$. Similarly, $\Pr^\indep_{x,y}$ denotes the law of the
chain with the transition matrix $\Psi^\indep$. In both cases, with a
slight abuse of notation,
$\widetilde X, \widetilde X', \widehat X, \widehat X'$ denote the
corresponding underlying chains which can be read from $\Xi$, see
Remark~\ref{rem:Xichain}. In particular, under $\Pr^\indep$,
$(\widehat X_n,\widehat X'_n)_n$ is a Markov chain with transition
probability $\widehat \Psi^\indep$, given in~\eqref{eq:PSihatindep}.

From now on, we assume in this section that $d\ge 2$ and complete the
proof of Lemma~\ref{lemma:qCLTars} under this assumption. The case $d=1$
requires different arguments that are postponed to
Section~\ref{sec:cased1}.

As we do not have a good control on the difference between
$\Psi^\indep((x,x'),\cdot)$ and $\Psi^\joint((x,x'),\cdot)$ when $\|x-x'\|$ is
small, we need to ensure that $\widehat X$ and $\widehat X'$ separate
sufficiently quickly under $\Pr^\joint$. This is shown in the next lemma:

\begin{lemma}[Separation lemma]
  \label{lemma:separation1}
  Let $d\ge 2$ and let $H(r)$ be as in \eqref{eq:hittingtimedistouter}. There
  are $b_1, b_2 \in (0,1/2)$, $b_3 > 0$, $b_4 \in (0,1)$ such that for $n$ large
  enough,
  \begin{equation}
    \Pr^\joint_{\mathbf 0, \mathbf 0}\big( H(n^{b_1}) \geq n^{b_2} \big) \leq
    e^{-b_3 n^{b_4}}.
  \end{equation}
\end{lemma}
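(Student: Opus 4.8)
The plan is to study the difference process $D_k \coloneqq \widehat X_k - \widehat X'_k$. By the spatial homogeneity of $\Psi^\joint$, under $\Pr^\joint_{\mathbf 0,\mathbf 0}$ this is a Markov chain on $\Z^d$ whose one-step increments have exponential tails uniformly in the current state (by Lemmas~\ref{lem:regenstructure} and~\ref{lemma:jointrenewaltails}). Under the independent kernel $\Psi^\indep$ the corresponding difference chain is a genuine symmetric, mean-zero, finite-variance random walk $\mathcal W$ whose step law is \emph{non-degenerate} -- it is the difference of two independent copies of the single-walk regeneration increment $Y_1$, which is not supported on a hyperplane, cf.\ the proof of Theorem~\ref{thm:qLLN-aCLT}. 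By Lemma~\ref{lemma:couplingjointregen} the two kernels agree up to total variation $Ce^{-c\norm{x-x'}}$, so $D$ behaves like $\mathcal W$ while $\norm{D}_2$ is large, but we have no such control near the diagonal. The argument then splits into two parts.

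\emph{(i) Escaping the diagonal.} I would first show that, started from an arbitrary configuration, $\norm{D}_2$ reaches the level $r_0\coloneqq A\log n$ within $n^{\e}$ steps with probability at least $\tfrac12$, for a suitably large constant $A$. The two inputs are: a uniform one-step lower bound $\Pr^\joint_z(\norm{D_1}_2 \ge \norm{D_0}_2 + 1) \ge \delta_0 > 0$ valid for every state $z$ -- for $\norm{z}_2$ beyond a fixed constant this follows from the non-degeneracy of the step law of $\mathcal W$ together with Lemma~\ref{lemma:couplingjointregen}, and for the finitely many small $\norm{z}_2$ from a direct positivity argument in the regeneration construction, analogous to the proof that $\Pr(Y_1=x,\tau_1=n\mid B_0)>0$ -- and the fact that $\norm{D_k}_2^2$ has conditional drift bounded below by a positive constant whenever $\norm{D_k}_2$ exceeds some fixed level, and bounded below by a finite constant in general (expand $\norm{D_1}_2^2 = \norm{z}_2^2 + 2\langle z,D_1-z\rangle + \norm{D_1-z}_2^2$ and compare the conditional mean and second moment of $D_1-z$ with their $\mathcal W$-counterparts via Lemma~\ref{lemma:couplingjointregen} and the uniform exponential tails). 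Since the single-step comparison degrades as $\norm{x-x'}\to 0$, this step has to be organised as a bootstrap over the dyadic scales $1,2,4,\dots,A\log n$: at each scale excursions below it are controlled by the positivity/submartingale estimates just described, while comparisons with $\mathcal W$ are only invoked above an inner barrier that grows (slowly) with $n$. The scales shrink geometrically, so the scheme terminates after $O(\log\log n)$ stages and costs at most $n^{\e}$ steps with the stated probability.

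\emph{(ii) Diffusing out.} Once $\norm{D}_2 \ge r_0 = A\log n$, run the chain for $T\coloneqq n^{2b_1+\e}$ steps. On the event that the chain reaches level $n^{b_1}$ before ever dropping below $r_0/2$, its trajectory law is within total variation $TCe^{-c r_0/2}=TCn^{-cA/2}$ of that of $\mathcal W$ by a step-by-step coupling through Lemma~\ref{lemma:couplingjointregen}; choosing $A$ large makes this $o(1)$. For $\mathcal W$, Lemma~\ref{lemma:harmonicballsd3} applied to the annulus with radii $r_0/2$ and $n^{b_1}$ gives that it exits the outer sphere before returning to the inner one with probability at least $(1-\e)f_d(r_0;r_0/2,n^{b_1})$, which is a positive constant for $d\ge 3$ and of order $1/\log n$ for $d=2$; since $T$ far exceeds the expected exit time $\asymp n^{2b_1}$ from this annulus, it follows that in both cases the joint chain reaches level $n^{b_1}$ within $T$ steps with probability at least $q_n\coloneqq c/\log^2 n$.

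Putting the parts together: from any starting state, an ``attempt'' -- an escape to level $r_0$ ($\le n^{\e}$ steps) followed by $T=n^{2b_1+\e}$ further steps -- reaches level $n^{b_1}$ with probability at least $q_n/2 \ge n^{-\e}$ (for $n$ large) and costs at most $2n^{2b_1+\e}$ steps. Fixing $0<b_1<b_2<1/2$ with $b_2>2b_1$ (e.g.\ $b_1=1/10$, $b_2=1/4$), the window $[0,n^{b_2})$ accommodates $N\coloneqq\lfloor n^{\,b_2-2b_1-\e}\rfloor$ disjoint such attempts, and by the Markov property each fails with probability at most $1-q_n/2$ independently of the current state, so
\[
\Pr^\joint_{\mathbf 0,\mathbf 0}\big(H(n^{b_1})\ge n^{b_2}\big)
\le (1-q_n/2)^N + (\text{error}) \le \exp\big(-n^{b_4}\big)
\]
for $n$ large, where $b_4\coloneqq b_2-2b_1-2\e\in(0,1)$ and ``(error)'' is the stretched-exponentially small total probability that one of the escape sub-steps in (i) fails; taking $b_3=1$ yields the claim. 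The hard part is (i): because Lemma~\ref{lemma:couplingjointregen} carries no information when $x=x'$, one cannot treat $D$ as a perturbed random walk near the diagonal and must instead push the non-degeneracy and drift estimates through a hierarchy of scales while keeping every constant uniform in $n$; that multi-scale bookkeeping is the real work. The recurrent case $d=1$ is excluded here and handled separately in Section~\ref{sec:cased1}.
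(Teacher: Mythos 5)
Your overall architecture is the same as the paper's (a positivity/``corridor'' argument near the diagonal, comparison with the independent difference walk via Lemma~\ref{lemma:couplingjointregen} plus the annulus-exit estimate of Lemma~\ref{lemma:harmonicballsd3} away from it, and a restart argument to convert per-attempt success into a stretched-exponential bound), but step (i) is a genuine gap, and it is exactly where the content of the lemma lies. You assert that from an arbitrary state the difference chain reaches distance $A\log n$ within $n^{\e}$ steps with probability at least $1/2$, for an $A$ that your step (ii) forces to be large, and you defer the proof to an unspecified ``bootstrap over dyadic scales''. The tools you actually name cannot deliver this. Near the diagonal the only quantitative input is the uniform one-step positivity bound $\delta_0$; iterating it gives an escape probability of order $\delta_0^{A\log n}=n^{-A\log(1/\delta_0)}$ per attempt of length $A\log n$, so success within $n^{\e}$ steps with probability $1/2$ requires $\e > A\log(1/\delta_0)$, while your step (ii) requires $cA/2 > 2b_1+\e$ so that the trajectory-coupling error $T\,Cn^{-cA/2}$ over $T=n^{2b_1+\e}$ steps is negligible. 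Since $c$ (from Lemma~\ref{lemma:couplingjointregen}) and $\delta_0$ are unrelated constants, these two constraints need not be simultaneously satisfiable, so the two-stage scheme as you set it up does not close. The drift estimate for $\norm{D_k}_2^2$ does not rescue it: it is only available above a fixed level, and below that level a lower bound by a finite (possibly negative) constant gives no control of the time the chain spends near the diagonal, where a priori the two walks on one cluster could be strongly attractive.

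The paper closes precisely this loop by inserting an intermediate stage with \emph{polylogarithmic} time horizons. Step~1 uses the corridor construction to escape only to $\varepsilon_1\log n$ with $\varepsilon_1$ \emph{small} (so the cost per attempt is $n^{-\varepsilon_1\log(1/\delta_1)}$ with a small exponent, and the escape succeeds within $n^{b_4}$ steps, $b_4<1/2$, up to a polynomially small failure probability). Step~2 then upgrades $\varepsilon_1\log n$ to $K\log n$ with \emph{constant} success probability: the coupling is invoked only over a time window of length $(K_2\log n)^3$, so even the weak per-step error $n^{-c\varepsilon_1/2}$ sums to $o(1)$, and the annulus has constant ratio of radii, so Lemma~\ref{lemma:harmonicballsd3} gives a constant exit probability in every $d\ge 2$. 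Only the final leg (Step~4/6), from $K_3\log n$ to $n^{b_1}$ over $n^{3b_1}$ steps, needs a large constant $K_3$, which is now reached at constant cost; there the coupling error is $n^{3b_1}e^{-cK_3\log n/2}\to 0$ by choosing $K_3$ large relative to $b_1/c$. Steps~5--6 then run the restart argument, with per-attempt success probability constant for $d\ge 3$ and of order $1/\log n$ for $d=2$, yielding $e^{-b_3 n^{b_4}}$. This intermediate, polylog-horizon stage is the missing mechanism that your ``dyadic bootstrap'' would have to supply; without it (or a concrete substitute), your step (i) — which you yourself flag as ``the real work'' — is not proved.
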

\begin{proof} We split the proof in six steps.

\medskip
\noindent
  \emph{Step~1.\ }
  We first observe that there exists a (small) $\varepsilon_1>0$
  and $b_4 \in (0,1/2)$, $b_5>0$
  such that
  \begin{equation}
    \label{eq:lemma:separation1:1}
    \Pr^\joint_{x,y}\big(H(\varepsilon_1\log n) > n^{b_4} \big)
    \leq n^{-b_5}
    \quad \text{for $n$ large enough},
  \end{equation}
  uniformly in $x, y \in \Z^d$. To see this we use the definition of
  $(\widehat X_n,\widehat X'_n)$ via the joint law of the cluster and
  two walkers on it to construct suitable ``corridors'' in opposite
  directions in the random environment, and force the two walks to
  walk along these corridors. Namely, denoting $r=\lfloor
  \varepsilon_1 \log n\rfloor$, and assuming without loss of
  generality that $x\cdot e_1 \le y\cdot e_1$, we require that $\omega
  (x-k e_1, k) = \omega(y+ke_1,k) = 1$ for all $k=1,\dots, r$, that
  $\xi (x-r e_1, r) = \xi (y+ r e_1, r) =1$, and that the permutations
  $\widetilde \omega $, $\widetilde \omega '$ are such $\widetilde
  \omega (x-k e_1,k)[1]=(x-(k+1)e_1,k+1)$, $\widetilde \omega' (y+k
  e_1,k)[1]=(y+(k+1)e_1,k+1)$, for all $k=0,\dots,r-1$. The
  probability that these requirements are satisfied can be easily
  bounded from below by $\delta_1^r$ for some $\delta_1=\delta_1(p, U)
  \in (0,1)$. If the requirements are satisfied, then $T^{\Sim}_k = k$
  and thus $\widehat X_k = x-k e_1$, $\widehat X'_k = y+ ke_1$ for all
  $k=1,\dots , r$. Therefore, we see that uniformly over $x, y \in
  \Z^d$
  \begin{equation}
    \label{eq:lemma:separation1:2}
    \Pr^\joint_{x,y}\big(  \|\widehat X_j-\widehat X'_j\| \geq j
      \big) \geq
    \delta_1^j.
  \end{equation}
  Thus, the probability that $(\widehat X_n)$ and $(\widehat X'_n)$ have distance
  $\varepsilon_1\log n$ after the first $\varepsilon_1\log n$ steps is
  at least $n^{-\varepsilon_1 \log (1/\delta_1)}$. If this happens, we are done,
  otherwise, we can try again by the Markov property. By the uniformity
  of the bound in (\ref{eq:lemma:separation1:2}), we have
  \begin{equation}
    \label{eq:lemma:separation1:3}
    \Pr^\joint_{x,y}\big(H(\varepsilon_1\log n) > m \varepsilon_1\log n \big)
    \leq (1-n^{-\varepsilon_1 \log(1/\delta_1)})^m
    \leq \exp(-m n^{-\varepsilon_1 \log(1/\delta_1)}).
  \end{equation}
  Now let $\varepsilon_1$ be so small that $-\varepsilon_1 \log \delta_1
  \in (0,1/2)$, pick $b_4 \in (-\varepsilon_1 \log \delta_1,1/2)$, $b_5>0$
  and set $m=b_5 n^{\varepsilon_1 \log(1/\delta_1)} \log n$ in
  (\ref{eq:lemma:separation1:3}).
  \medskip

  \noindent
  \emph{Step~2.\ }  Next we claim that for any $K_2 > 0$ we can pick a $\delta_2
  \in (0,1)$ such that for all $x,y \in \Z^d$ with $\varepsilon_1 \log n \leq
  \|x-y\| < K_2 \log n$ and $n$ large enough
  \begin{equation}
    \label{eq:lemma:separation1:4}
    \Pr^\joint_{x,y}\big( H(K_2 \log n) < h(\tfrac12\varepsilon_1 \log n) \wedge
    (K_2 \log n)^3 \big) \geq \delta_2,
  \end{equation}
  where $h$ is the stopping time defined in
  \eqref{eq:hittingtimedistouter}. To this end we couple
  $\Pr^\joint_{x,y}$ with $\Pr^\indep_{x,y}$ using
  Lemma~\ref{lemma:couplingjointregen}, in a standard way. This
  coupling implies that the left-hand side of
  \eqref{eq:lemma:separation1:4} is bounded from below by
  \begin{equation}
    \label{e:sepub}
    \Pr^\indep_{x,y}\big( H(K_2 \log n) < h (\tfrac12\varepsilon_1 \log n) \wedge
    (K_2 \log n)^3\big) - C (K_2 \log n)^3 n^{-c \varepsilon_1/2},
  \end{equation}
  where the second term is an upper bound on the probability that the coupling
  fails before the time $\min\{H(K_2\log n),h(\tfrac12 \varepsilon_1\log
  n),(K_2\log n)^3\}$. This bound follows from the fact that before this time
  the distance between $\widehat X$, $\widehat X'$ is at least $\tfrac12
  \varepsilon_1\log n$ and thus the probability that the coupling fails in one
  step is at most $\exp\{-\tfrac c2 \varepsilon_1\log n\}= n^{-c
    \varepsilon_1/2}$.

  The first term in \eqref{e:sepub} is bounded from below by a small
  constant $ \delta'$, uniformly in $n$, as follows from
  Lemma~\ref{lemma:harmonicballsd3} (with $r_1=\tfrac12\varepsilon_1
  \log n$, $r_2=K_2 \log n$) and the fact that
  $\Pr^\indep_{x,y}(\inf\{k:\|\widehat X_k-\widehat X'_k\|\geq K_2
  \log n\} > (K_2\log n)^3) \to 0$ as $n\to\infty$. The latter
  assertion holds because by the invariance principle we have for a
  $d$-dimensional Brownian motion
  \begin{multline*}
    \Pr^\indep_{x,y}  \left (\inf\left \{k:\|\widehat X_k-\widehat
        X'_k\|\geq K_2 \log n \right\} > (K_2\log n)^3\right) \\
     \approx \Pr \left(\inf\left \{ t: \| B_t\| \ge \frac{K_2 \log
          n}{\sqrt{2n}} \right\} > \frac{(K_2\log n)^3}{n} \right) \le
    \frac{n}{(K_2 \log n)^3} \frac{(K_2 \log n )^2}{\sqrt{2dn}},
  \end{multline*}
  where the last inequality follows from the Markov inequality and the
  fact that the expected exit time of a $d$-dimensional Brownian from
  a ball of radius $r$ is bounded by $r^2/d$. As the second term in
  \eqref{e:sepub} converges to $0$ as $n\to\infty$, the proof of
  \eqref{eq:lemma:separation1:4} is completed.

  \medskip

  \noindent
  \emph{Step~3.\ }By repeating the argument from Step~1 and using
  (\ref{eq:lemma:separation1:4}) from Step~2, we see that we can choose a
  (large) $K_3$, $b_6 \in (b_4,1/2)$ such that uniformly in $x,y \in \Z^d$
  \begin{equation}
    \label{eq:lemma:separation1:5}
    \Pr^\joint_{x,y}\big(H(K_3 \log n) \leq n^{b_6}  \big) \geq \delta_3 > 0
    \quad \text{for $n$ large enough}.
  \end{equation}

  The previous steps work for all $d\ge 2$. In the next steps we shall
  treat separately the cases $d=2$ and $d\ge 3$. We start with the
  case $d\ge 3$.

  \medskip
  \noindent
  \emph{Step~4 ($d\ge 3$).} Arguing as in Step~2, we can find $b_1 \in (0,1/6)$
  such that for all $x,y \in \Z^d$ with
  $K_3 \log n \leq \|x-y\| < n^{b_1}$ and $n$ large enough
  \begin{equation}
    \label{eq:lemma:separation1:6}
    \Pr^\joint_{x,y}\big( H(n^{b_1}) < h (\tfrac12 K_3 \log n) \wedge n^{3
      b_1}\big) \geq \delta_4 > 0.
  \end{equation}
  \smallskip

  \noindent
  \emph{Step~5 ($d\ge 3$).} Now we recycle the argument from Step~1 as
  follows: Wait until $(\widehat X_n)$ and $(\widehat X'_n)$ have
  reached distance at least $K_3 \log n$ or stop if the waiting time
  exceeds $n^{b_6}$. Then, let $(\widehat X_n,\widehat X'_n)$ run
  until they have either reached distance $n^{b_1}$ or have taken
  (another) $n^{3b_1}$ steps without reaching that distance. Note that
  by construction, such an attempt takes at most $n^{b_6}+n^{3b_1}$
  time steps, and by (\ref{eq:lemma:separation1:5}) and
  (\ref{eq:lemma:separation1:6}), with probability at least $\delta_3
  \delta_4$ leads to a separation of $n^{b_1}$, as required. If it
  does not, we start afresh, using the Markov property and the
  uniformity of (\ref{eq:lemma:separation1:5},
  \ref{eq:lemma:separation1:6}) in their respective initial
  conditions.

  Pick $b_2, b_7$ such that $b_6 \vee 3b_1 < b_7 < b_2 < 1/2$ (in
  particular, $n^{b_7} \geq n^{b_6}+n^{3b_1}$ for $n$ large enough),
  put $b_4 \coloneqq b_2-b_7$, $b_3 \coloneqq -\log(1-\delta_3
  \delta_4)$. The probability that the first $n^{b_4}$ attempts all
  fail is bounded above by $(1-\delta_3\delta_4)^{n^{b_4}} = \exp(-b_3
  n^{b_4})$, and by construction these first $n^{b_4}$ attempts take
  at most $n^{b_4}(n^{b_6}+n^{3b_1}) \leq n^{b_4+b_7}=n^{b_2}$ time
  steps, which proves the lemma for $d\ge 3$. \medskip

  \noindent\emph{Step 6 ($d=2$).} In $d=2$, the relation
  \eqref{eq:lemma:separation1:6} should be replaced by
  \begin{equation}
    \label{eq:lemma:separation1:7}
    \Pr^\joint_{x,y}\big( H(n^{b_1}) < h(\tfrac12 K_3 \log n) \wedge n^{3 b_1}
    \big) \geq \frac {\log 2}{ b_1 \log n},
  \end{equation}
  which can be shown using the same argument as in Step~2,
  together with Lemma~\ref{lemma:harmonicballsd3}. The argument in Step~5
  is then analogous: The probability that the walks separate by distance
  $n^{b_1}$ in the first $n^{b_6}+n^{3b_1}$ steps is bounded from below
  by $\delta_3 \log2/ (b_1 \log n)$. Fixing $b_4=(b_2-b_7)/2$, we see
  that the probability that the first $n^{2b_4}$ attempts fail is smaller than
  $(1-\delta_3 \log 2/(b_1 \log n))^{n^{2b_4}}\le \exp(-b_3 n^{b_4})$, for
  some small $b_3$ and $n$ large enough. These first $n^{2b_4}$ attempts
  take at most $n^{2b_4}(n^{b_4}+n^{3b_1})\leq n^{b_2}$ time steps, as
  required.
\end{proof}

Using the above separation lemma, we now construct a coupling of
$\Pr^\joint_{\mathbf 0,\mathbf 0}$ and $\Pr^\indep_{\mathbf 0,\mathbf 0}$ so
that the increments differ only in few steps.

\begin{lemma}[Coupling of dependent and independent $\Xi$-chains]
  \label{l:hatcoupling} For any $d\ge 2$ there is a Markov chain
  $(\Xi^\joint_n,\Xi^\indep_n)_n$ with state space $(\mathbbm F\times
  \mathbbm F\times \Z^d \times \Z^d)^2$ such that \, $\Xi^\joint$ is
  $\Pr^\joint_{\mathbf 0, \mathbf 0}$- and \, $\Xi^\indep$ is
  $\Pr^\indep_{\mathbf 0,\mathbf 0}$-distributed. Furthermore, writing
  $\Xi^\joint_n = (\alpha^\joint_n,\alpha '^\joint_n,\widehat
  X^\joint_n,\widehat X'^\joint_n)$ and $\Xi^\indep_n =
  (\alpha^\indep_n,\alpha '^\indep_n,\widehat X^\indep_n,\widehat
  X'^\indep_n)$, there exist $b_6>0$ and $b_5\in (0,1/2)$ so that for
  all $N$ large enough,
  \begin{equation}
    \label{e:Xicoupling}
    \Pr\big(\#\{k\le N: (\alpha^\joint_k,\alpha '^\joint_k)\neq
        (\alpha^\indep_k,\alpha '^\indep_k)\}\ge N^{b_5}\big)\le N^{-b_6}.
  \end{equation}
\end{lemma}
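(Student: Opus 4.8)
The plan is to build the coupling one step at a time, exploiting the structure of the two increment laws. The $\mathbbm F\times\mathbbm F$-part of $\Xi_{m+1}$ (the freshly drawn ``increment piece'' $(\beta,\beta')$, which is precisely the pair $(\alpha_{m+1},\alpha'_{m+1})$ of the statement) has, under $\Psi^\indep$, a law $\mu^\indep$ that does \emph{not} depend on the current spatial positions at all (the two walks then run in independent media, so the individual regeneration increments are i.i.d.\ and the block lengths $J_{m+1}-J_m$, $J'_{m+1}-J'_m$ are functions of the $\tau$- and $\tau'$-sequences only), whereas under $\Psi^\joint$ its law $\mu^\joint_{x-x'}$ depends on the current positions $x,x'$ only through the separation $x-x'$ and satisfies $\norm{\mu^\joint_{x-x'}-\mu^\indep}_{\textnormal{TV}}\le Ce^{-c\norm{x-x'}}$ by Lemma~\ref{lemma:couplingjointregen} (the spatial coordinates of $\Xi_{m+1}$ are deterministic functions of $(\beta,\beta')$ and the previous positions, so the total variation bound between $\Psi^\joint((x,x'),\cdot)$ and $\Psi^\indep((x,x'),\cdot)$ is the same as the one between the laws of the increment pieces). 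Concretely, given $(\Xi^\joint_k,\Xi^\indep_k)$, I would draw the increment pieces of $\Xi^\joint_{k+1}$ and of $\Xi^\indep_{k+1}$ from the maximal coupling of $\mu^\joint_{\widehat X^\joint_k-\widehat X'^\joint_k}$ and $\mu^\indep$, and then append the respective deterministic spatial displacements. This is a legitimate Markov coupling realising $\Pr^\joint_{\mathbf 0,\mathbf 0}$ and $\Pr^\indep_{\mathbf 0,\mathbf 0}$, with the key feature that
\[
\Pr\big((\alpha^\joint_{k+1},\alpha '^\joint_{k+1})\neq(\alpha^\indep_{k+1},\alpha '^\indep_{k+1})\,\big|\,\Xi^\joint_k,\Xi^\indep_k\big)\le Ce^{-c\norm{\widehat X^\joint_k-\widehat X'^\joint_k}},
\]
a bound involving $\Xi^\joint$ alone, independently of the past and of $\Xi^\indep$. (Note that the spatial parts of the two chains may well differ even when the $\mathbbm F$-parts agree; only the latter matters here.)

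Writing $D_k:=\widehat X^\joint_k-\widehat X'^\joint_k$, which under $\Pr^\joint_{\mathbf 0,\mathbf 0}$ is itself a Markov chain (by the spatial homogeneity of $\widehat\Psi^\joint$) that, by Lemma~\ref{lemma:couplingjointregen}, couples while it stays away from the origin with the symmetric centred finite-variance random walk of Lemma~\ref{lemma:harmonicballsd3}, the number of ``bad'' steps $\#\{k\le N:(\alpha^\joint_k,\alpha '^\joint_k)\neq(\alpha^\indep_k,\alpha '^\indep_k)\}$ is bounded above by $A_N+B_N$, where $A_N:=\#\{k\le N:\norm{D_{k-1}}\le N^{b_1}\}$ is the occupation time up to $N$ of the ball of radius $N^{b_1}$ by $D$, and $B_N$ counts steps at which $D$ is outside that ball but the maximal coupling nevertheless fails. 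For $B_N$ the one-step failure probability is then at most $Ce^{-cN^{b_1}}$, so a union bound gives $\Pr(B_N\ge 1)\le CNe^{-cN^{b_1}}\le N^{-b_6}$ for $N$ large. It remains to show $\Pr(A_N\ge N^{b_5})\le N^{-b_6}$, for suitable exponents with $0<b_1<b_2<b_5<1/2$ and $b_6>0$ small.

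For this occupation-time bound I would re-run the ``recycling'' scheme of the proof of Lemma~\ref{lemma:separation1}. Decompose the path of $D$ into its excursions inside the ball $\{\norm{\cdot}\le N^{b_1}\}$ and outside it. By a uniform-in-starting-point version of Lemma~\ref{lemma:separation1}, each in-ball excursion has length at most $N^{b_2}$ except on an event of probability $\le e^{-b_3 N^{b_4}}$; summing over the polynomially many excursions keeps this below $N^{-b_6}$, so $A_N\le N^{b_2}\cdot(\text{number of in-ball excursions})+(\text{a short final piece})$, and it suffices to show there are at most $N^{b_5-b_2}$ in-ball excursions before time $N$ with probability $\ge 1-N^{-b_6}$. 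Here I would invoke Lemma~\ref{lemma:harmonicballsd3}: each time $D$ leaves the ball (at distance of order $N^{b_1}$), coupling with $\Pr^\indep$ while $D$ stays separated (as in Step~2 of the proof of Lemma~\ref{lemma:separation1}) together with the annulus exit estimates show that $D$ reaches a radius of order $N^{1/2-\e}$ before returning to the ball with a probability bounded below --- by a constant for $d\ge 3$, and of order $1/\log N$ for $d=2$ --- so that the number of returns is stochastically dominated by a geometric variable and is $O((\log N)^2)$ with overwhelming probability; and once $D$ has reached distance $N^{1/2-\e}$, the invariance principle shows it cannot come back to the ball within the remaining $\le N$ steps except with small probability, which closes the loop. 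Choosing the $b_i$ consistently (as in Lemma~\ref{lemma:separation1}) and combining with the bound on $B_N$ yields \eqref{e:Xicoupling}.

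The hard part will be this last step, the occupation-time estimate for the subdiffusive ball of radius $N^{b_1}$: in $d=2$ the walk $D$ is recurrent, so it revisits the ball infinitely often, and controlling the number of revisits before time $N$ --- hence the total occupation time --- is exactly where the logarithmic annulus estimates of Lemma~\ref{lemma:harmonicballsd3} and a careful accounting of the time spent between successive revisits are needed, essentially a re-run of Steps~2--6 of the proof of Lemma~\ref{lemma:separation1}. (For $d=1$ the occupation time would be of order $N^{b_1+1/2}\gg N^{b_5}$, which is why $d=1$ is excluded here and treated separately in Section~\ref{sec:cased1}.)
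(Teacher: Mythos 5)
Your coupling construction itself is fine and is essentially a variant of the paper's scheme: step-by-step maximal coupling of the increment pieces, using that $\mu^\indep$ is position-independent and that, by Lemma~\ref{lemma:couplingjointregen}, the increment laws differ in total variation by $Ce^{-c\norm{x-x'}}$; the split of the bad steps into $A_N$ (occupation time of the ball of radius $N^{b_1}$ by $D_k=\widehat X^\joint_k-\widehat X'^\joint_k$) plus $B_N$ (coupling failures outside that ball) is also sound, and $B_N$ is handled correctly. The genuine gap is in the excursion count for $A_N$, i.e.\ exactly the step you flag as the hard part. When $D$ exits the ball of radius $r=N^{b_1}$ it does so at distance $r+O(\log N)$ only (the increments have exponential tails), so by the annulus estimates of Lemma~\ref{lemma:harmonicballsd3} the probability that it reaches radius $N^{1/2-\varepsilon}$ before re-entering the ball is of order $\frac{\log(1+O(\log N)/r)}{\log(N^{1/2-\varepsilon}/r)}\approx N^{-b_1}\log N$ in $d=2$, and of order $1-\bigl(r/(r+O(\log N))\bigr)^{d-2}\approx N^{-b_1}\log N$ in $d\ge 3$ --- not $c/\log N$, respectively not a constant, as you claim. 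Consequently the number of in-ball excursions up to time $N$ is typically of polynomial order $N^{b_1}$ (up to logarithms), not $O(\log^2 N)$, so the bound ``at most $N^{b_5-b_2}$ excursions'' does not follow from your geometric-domination argument. Moreover, in $d=2$ the final claim is also false: started from distance $N^{1/2-\varepsilon}$, the probability that $D$ returns to the ball of radius $N^{b_1}$ within the remaining $\le N$ steps is of constant order (roughly $\varepsilon/(\tfrac12-b_1)$ by the logarithmic estimate), not small.

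The estimate is repairable (accept the correct count $\lesssim N^{b_1}\log^2 N$ of in-ball excursions, take $b_1$ small and $b_5\in(b_1+b_2,1/2)$, or insert an intermediate scale), but note that this is precisely where the paper proceeds differently and avoids the problem: it couples already when the separation exceeds $K\log N$, paying a per-step failure probability $N^{-cK}$ which a union bound over $N$ steps absorbs for $K$ large, and in $d=2$ decomposes the path into upcrossings from level $N^{b_1}$ to level $K'N$ with return level $K\log N$. Since $\log\bigl(N^{b_1}/(K\log N)\bigr)/\log\bigl(K'N/(K\log N)\bigr)\to b_1$, each upcrossing escapes with probability bounded below by a constant, so the number of ``close'' intervals is $O(\log N)$ with high probability and each has length at most $N^{b_2}$ by Lemma~\ref{lemma:separation1} (in $d\ge3$ no excursion decomposition is needed at all, since from separation $N^{b_1}$ the walks never again come closer than $K\log N$ up to time $N$, except with polynomially small probability). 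Your choice of taking the return level equal to the coupling radius $N^{b_1}$ destroys the constant-order escape probability, and that is the source of the error.
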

\begin{proof}
  First observe that under $\Pr^{\indep}_{\mathbf 0,\mathbf 0}$ the sequences
  $\alpha^\indep$ and $\alpha'^\indep$ are independent and i.i.d., as the
  law of the increments up to the next regeneration do not depend on the
  positions $\widehat X^\indep$, $\widehat X'^\indep$.

  Using this observation, the construction of the coupling in $d\ge 3$ is
  easy. We first run the Markov chains $\Xi^\indep$,
  $\Xi^\joint$ independently according to their prescribed laws up to the first
  time $T$ when $\|\widehat X^\joint_T-\widehat X'^\joint_T\|\ge N^{b_1}$, for
  $b_1$ as in Lemma~\ref{lemma:separation1}. Let $A_1$ be the event
  $A_1=\{T\le N^{b_2}\}$. According to Lemma~\ref{lemma:separation1},
  $\Pr(A_1)\ge 1-e^{-b_3 N^b_4}$.

  Let, for some large $K$, $A_2$ be the event
  \begin{equation}
    A_2\coloneqq\{\|\widehat X^\joint_k-\widehat X'^\joint_k\|\ge K \log N
      \text{ for all } T\le k \le N\}.
  \end{equation}
  By comparing $\widehat X^\joint$ with $\widehat X^\indep$, as in the proof of
  Lemma~\ref{lemma:separation1}, and using elementary properties of the
  random walk and the fact
  $\|\widehat X^\joint_T-\widehat X'^\joint_T\|\ge N^{b_1}$, it is easy to see
  that $\mathbb P[A_2^c]\le N^{-c}$ for some $c> 0$

  On $A_1\cap A_2$, couple $\Xi^\indep$ and $\Xi^\joint$ so that
  $(\alpha^\joint_k,\alpha '^\joint_k) = (\alpha^\indep_k,\alpha '^\indep_k)$
  when possible for all $k\in [T,N]$. Using
  Lemma~\ref{lemma:couplingjointregen} and the observation in the first
  paragraph of this proof, the probability that this does not
  occur is at most $N e^{-cK\log N}$. On $A_1^c\cup A_2^c$, run
  $\Xi^\indep$, $\Xi^\joint$ independently up to time $N$. Obviously, this
  coupling satisfies \eqref{e:Xicoupling}.

  In dimension $d=2$ the situation is slightly more complicated, as
  the event $A_2$ has a small probability. We need to decompose the
  trajectory of $\Xi^\joint$ into excursions. For large constants
  $K,K'$, we define stopping times $\mathcal R_i$, $\mathcal D_i$,
  $\mathcal U$ by $\mathcal R_0= 0$, and for $i\ge 1$
  \begin{equation}
    \begin{split}
      \mathcal D_i&=\inf\{k\ge \mathcal R_{i-1}:\|\widehat X^\joint_k-\widehat X'^\joint_k\|\ge
        N^{b_1}\},\\
      \mathcal R_i&=\inf\{k\ge \mathcal D_i:\|\widehat X^\joint_k-\widehat X'^\joint_k\|\le
        K\log N\},\\
      \mathcal U &= \inf\{k\ge 0: \|\widehat X^\joint_k-\widehat X'^\joint_k\|\ge
        K'N\}.
    \end{split}
  \end{equation}
  We set $J$ to be the unique integer such that
  $\mathcal D_J\le \mathcal U\le \mathcal R_J$. The
  random variable $J$ has a geometric distribution with parameter
  converging to $b_1$ as $N\to\infty$, as can be easily shown by
  comparing $\Xi^\joint$ with $\Xi^\indep$ as in the proof of
  Lemma~\ref{lemma:separation1} and applying
  Lemma~\ref{lemma:harmonicballsd3}.
  Therefore, $\mathbb P(J\ge \log N)\le N^{-c}$ for a $c>0$.
  Applying Lemma~\ref{lemma:separation1}, we get
  \begin{equation}
    \label{e:oneshort}
    \mathbb P(\mathcal D_i-\mathcal R_{i-1}\ge N^{b_2}) \le e^{-b_3 N^{b_4}}.
  \end{equation}
  Combining these two facts we obtain
  \begin{equation}
    \label{e:insideshort}
    \mathbb P\Big(\sum_{i=1}^J \mathcal D_i-\mathcal R_{i-1}\ge N^{b_2}\log N\Big)\le
    N^{-c}.
  \end{equation}
  On the other hand, comparing again $\Xi^\joint$ with $\Xi^\indep$,
  and using simple random walk large deviation estimates, for $K'$
  large enough,
  \begin{equation}
    \label{e:Ubig}
    \mathbb P(\mathcal U\le N)\le e^{-c N}\le N^{-c}.
  \end{equation}
  Inequalities \eqref{e:insideshort} and \eqref{e:Ubig} yield
  \begin{equation}
    \begin{split}
      \mathbb P&\left(\#\{k\le N:\|\widehat X^\joint_k-\widehat X'^\joint_k\|\le K\log N\}\ge N^{b_2}\log N\right)
      \\&\le
      \mathbb P\left(\sum_{i:\mathcal R_{i-1}\le N} \mathcal D_i-\mathcal R_{i-1}\ge N^{b_2}\log N\right)\le
      N^{-c}.
    \end{split}
  \end{equation}
  If the event on the left-hand side of the last display does not occur,
  we can with probability at least $1- N^{-c}$ couple $\Xi^\joint$
  and $\Xi^\indep$ so that
  $(\alpha^\joint_k,\alpha '^\joint_k) = (\alpha^\indep_k,\alpha '^\indep_k)$
  for all $k$ satisfying $\mathcal D_i\le k \le \mathcal R_i$
  for some $i$,
  using Lemma~\ref{lemma:couplingjointregen} (and noting that
  under $\Psi^\indep$, the law of $(\alpha^\indep_1,\alpha '^\indep_1)$
  does not depend on the starting point). Taking $b_5$ satisfying $b_2<b_5<1/2$,
  \eqref{e:Xicoupling} is proved for $d=2$.
\end{proof}

\begin{lemma}
  \label{lemma:couplerrord3lip}
  Let $d \geq 2$. Recall that $\widetilde X$, $\widetilde X'$ are read from $\Xi$
  as in Remark~\ref{rem:Xichain}. Then,
  there exist $b, C >0$ such that for every pair of bounded Lipschitz
  functions $f, g : \R^d \to \R$
  \begin{equation}
    \label{eq:couplerrord3lip}
    \begin{split}
      \Big| & \E^\joint_{\mathbf 0, \mathbf 0}\big[
      f(\widetilde{X}_n/\sqrt{n})g(\widetilde{X}'_n/\sqrt{n}) \big] -
      \E^\indep_{\mathbf 0, \mathbf 0}\big[
      f(\widetilde{X}_n/\sqrt{n})g(\widetilde{X}'_n/\sqrt{n})\big] \Big| \\
      & \leq C \big(1+\|f\|_\infty + L_f\big) \big(1+\|g\|_\infty + L_g\big)
      n^{-b},
    \end{split}
  \end{equation}
  where $L_f \coloneqq \sup_{x \neq y} |f(y)-f(x)|/\|y-x\|$ and $L_g$ are the Lipschitz
  constants of $f$ and $g$.
\end{lemma}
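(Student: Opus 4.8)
The plan is to apply the coupling of Lemma~\ref{l:hatcoupling} to realise, on a single probability space, the pair $(\widetilde X_n,\widetilde X'_n)$ with its $\Pr^\joint_{\mathbf 0,\mathbf 0}$-law and with its $\Pr^\indep_{\mathbf 0,\mathbf 0}$-law, in such a way that the two versions coincide up to an error of size $o(\sqrt n)$ outside an event of probability $O(n^{-b_6})$; \eqref{eq:couplerrord3lip} then follows from the boundedness and the Lipschitz property of $f$ and $g$.

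Fix $N=2n$ (for $n$ large; small $n$ are absorbed into the constant), and let $(\Xi^\joint_k,\Xi^\indep_k)_{k\ge0}$ be the coupled chain of Lemma~\ref{l:hatcoupling}, written $\Xi^\bullet_k=(\alpha^\bullet_k,\alpha'^\bullet_k,\widehat X^\bullet_k,\widehat X'^\bullet_k)$ for $\bullet\in\{\joint,\indep\}$. Recall from \eqref{eq:Xi_m} and Remark~\ref{rem:Xichain} that the concatenation $\alpha^\bullet_1\alpha^\bullet_2\cdots$ is precisely the sequence $((Y^\bullet_j,\tau^\bullet_j))_{j\ge1}$ of individual regeneration increments of the first walk, so that $\widetilde X^\bullet_n=\sum_{j=1}^n Y^\bullet_j$ is a function of $\alpha^\bullet_1,\dots,\alpha^\bullet_{m^\bullet(n)}$, where $m^\bullet(n)\le n$ is the index of the block carrying the $n$-th individual renewal (each block has length $\ge1$); the same holds for $\alpha'^\bullet$ and $\widetilde X'^\bullet$. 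Put $D\coloneqq\{k\le N:(\alpha^\joint_k,\alpha'^\joint_k)\neq(\alpha^\indep_k,\alpha'^\indep_k)\}$; by \eqref{e:Xicoupling}, $\Pr(|D|\ge N^{b_5})\le N^{-b_6}$. Let also $G'$ be the event that each of the first $N$ blocks of either chain has $T^\Sim$-increment at most $(\log N)^2$. Since these increments have exponential tails --- for $\Pr^\joint$ by Lemma~\ref{lemma:jointrenewaltails} and the Markov property of the $\Xi$-chain, for $\Pr^\indep$ because they are i.i.d.\ with exponential tails --- a union bound gives $\Pr((G')^c)\le 2CN e^{-c(\log N)^2}$, which decays faster than any power of $n$.

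On $\mathcal E\coloneqq\{|D|<N^{b_5}\}\cap G'$ I claim $\norm{\widetilde X^\joint_n-\widetilde X^\indep_n}\vee\norm{\widetilde X'^\joint_n-\widetilde X'^\indep_n}\le C n^{b_5}(\log n)^4$. Because $\alpha^\joint_k=\alpha^\indep_k$ for all $k\le N$ with $k\notin D$, deleting from $((Y^\joint_j,\tau^\joint_j))_j$ and from $((Y^\indep_j,\tau^\indep_j))_j$ all entries stemming from blocks in $D$ produces, over the first $N$ blocks, one and the same sequence. Among the first $n$ individual increments at most $P\coloneqq N^{b_5}(\log N)^2$ come from blocks in $D$ (each such block contributes at most $(\log N)^2$ increments on $G'$), and the good increments among the first $n$ form a common prefix of the deleted sequence; its length is $n$ minus the number of bad increments among the first $n$, and this number differs between the two chains by at most $P$. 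Since moreover $\abs{Y_j}\le\tau_j\le(\log N)^2$ for every individual increment sitting inside the first $N$ blocks on $G'$, summing the bad-increment contributions and the discrepancy of the common prefixes gives $\norm{\widetilde X^\joint_n-\widetilde X^\indep_n}\le 3P(\log N)^2\le C n^{b_5}(\log n)^4$; the argument for $\widetilde X'$ is identical. (The choice $N=2n$ ensures that all indices occurring here, which are $n+O(P)$ at most, lie safely within the first $N$ blocks.)

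To conclude, note that $(\widetilde X^\joint_n,\widetilde X'^\joint_n)$ has the $\Pr^\joint_{\mathbf 0,\mathbf 0}$-law of $(\widetilde X_n,\widetilde X'_n)$ and $(\widetilde X^\indep_n,\widetilde X'^\indep_n)$ its $\Pr^\indep_{\mathbf 0,\mathbf 0}$-law, so the left-hand side of \eqref{eq:couplerrord3lip} equals $\bigl|\E\bigl[f(\widetilde X^\joint_n/\sqrt n)g(\widetilde X'^\joint_n/\sqrt n)-f(\widetilde X^\indep_n/\sqrt n)g(\widetilde X'^\indep_n/\sqrt n)\bigr]\bigr|$. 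On $\mathcal E$ the integrand is at most $(L_f\norm{g}_\infty+\norm{f}_\infty L_g)\cdot C n^{b_5-1/2}(\log n)^4$, by writing $f_1g_1-f_2g_2=(f_1-f_2)g_1+f_2(g_1-g_2)$ and using the Lipschitz bounds; on $\mathcal E^c$ it is at most $2\norm{f}_\infty\norm{g}_\infty$, and $\Pr(\mathcal E^c)\le C n^{-b_6}+o(n^{-A})$ for every $A$. Adding the two contributions and choosing any $b\in(0,\min\{\tfrac12-b_5,b_6\})$ yields \eqref{eq:couplerrord3lip}. The one genuinely delicate point is the re-indexing in the previous paragraph: a bad block changes the lengths of the subsequent blocks, so the good increments sit at different positions in the two sequences, and one must exploit the common-prefix structure of the sequences with the bad blocks removed rather than compare $Y^\joint_j$ with $Y^\indep_j$ index by index.
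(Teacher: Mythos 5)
Your argument is correct and follows essentially the same route as the paper: couple via Lemma~\ref{l:hatcoupling}, delete the uncoupled blocks so that the remaining individual-increment sequences coincide (the paper encodes exactly this as the order-preserving bijection $\phi$ in its decomposition \eqref{e:crazydec}), bound the number and size of the discrepancy terms, and finish with the product-Lipschitz estimate \eqref{eq:22}. The only (harmless) difference is that you control the stray increments deterministically through the event that all of the first $2n$ block lengths are at most $(\log n)^2$, which costs extra logarithmic factors, whereas the paper uses exponential-tail/large-deviation bounds on the sums over bad indices; both give a discrepancy of order $n^{b_5+o(1)}=o(\sqrt n)$ outside an event of polynomially small probability, hence the same conclusion.
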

\begin{proof}
  We use the coupling constructed in the last lemma. Let $\mathcal I$
  be the complement of the set of indices appearing in
  \eqref{e:Xicoupling},
  \begin{equation}
    \mathcal I =\{k\le n: (\alpha^\joint_k,\alpha '^\joint_k) =
    (\alpha^\indep_k,\alpha '^\indep_k)\},
  \end{equation}
  and set $\mathcal I^c = \{0,\dots,n\}\setminus \mathcal I$. By the last lemma,
  \begin{equation}
    \label{e:Icsmall}
    \mathbb P\left(\abs{\mathcal I^c}\ge n^{b_5}\right)\le n^{-b_6}.
  \end{equation}

  We now read the processes $\widetilde X^\joint$, $\widetilde
  X'^\joint$, $\widetilde X^\indep$, $\widetilde X'^\indep$ out of
  $\Xi^\joint$, $\Xi^\indep$. Recall notation
  \eqref{eq:defRi}--\eqref{eq:defhatXell}. We use the additional
  superscript $\cdot^\indep$ or $\cdot^\joint$ in this notation in the
  obvious way, and write $T^\joint$, $T^\indep$ for $T^\Sim$
  corresponding to those processes.

  By Lemma~\ref{lemma:jointrenewaltails}, and standard large deviation
  estimates, there is a large constant $K$ so that
  \begin{equation}
    \mathbb P (T^\joint_n\ge Kn) \le e^{- cn}
  \end{equation}
  and, using \eqref{e:Icsmall} as well,
  \begin{equation}
    \label{e:badtimes}
    \mathbb P\left(\sum_{i\in \mathcal I^c}
      \bigl(T^\joint_i-T^\joint_{i-1}\bigr) \ge
      Kn^{b_5}\right)\le n^{-b_6},
  \end{equation}
  and similarly for $T^\indep$.

  We now consider the process
  $\widetilde X^\indep$ and define two sets
  \begin{equation}
    \begin{split}
      \mathcal G^\indep & \coloneqq \{1\le k\le J^\indep_n:[T_{k-1},T_k]\subset
        [T^\indep_{j-1},T^\indep_j] \text{ for some } j\in \mathcal I\}\\
      \mathcal B^\indep &\coloneqq \{1, \dots,  J^\indep_n\}\setminus \mathcal G^\indep.
    \end{split}
  \end{equation}
  We define $\mathcal G'^\indep$, $\mathcal G^\joint$, \dots
  analogously. Note that $\mathcal G$'s are the sets of indices of
  those increments that `occur during coupled periods of $\Xi$'s'.
  More precisely, by the coupling construction, there is an
  order-preserving bijection $\phi $ of $\mathcal G^\indep$ and
  $\mathcal G^\joint$ so that for every $j \in \mathcal G^\indep$
  (using the notation introduced before \eqref{eq:deftildeXm}),
  \begin{equation}
    Y^\indep_j = \widetilde X^\indep_j - \widetilde X^\indep_{j-1}= \widetilde X^\joint_{\phi (j)} - \widetilde
    X^\joint_{\phi(j)-1} = Y^\joint_{\phi(j)}.
  \end{equation}
  Therefore, setting $[n]\coloneqq\{1,\dots,n\}$, and $\mathcal R_{[n]} \coloneqq \mathcal R
  \cap [n]$ for any set $\mathcal R$, we can write
  \begin{equation}
    \begin{split}
      \label{e:crazydec}
      \widetilde X^\indep_n
      &= \sum_{i=1}^n Y^\indep_i = \sum_{i\in \mathcal G^\indep_{[n]}} Y^\indep_i
      +\sum_{i\in \mathcal B^\indep_{[n]}} Y^\indep_i
      \\ & =\sum_{j\in \phi(\mathcal
        G^\indep_{[n]})} Y^\joint_j +\sum_{i\in \mathcal B^\indep_{[n]}}
      Y^\indep_i
      \\
      &= \widetilde X^\joint_n +
      \sum_{j\in \phi(\mathcal G^\indep_{[n]})\setminus[n]} Y^\joint_j
      -\sum_{j\in \mathcal G^\joint_{[n]} \setminus \phi (\mathcal G^\indep_{[n]})} Y^\joint_j
      +\sum_{i\in \mathcal B^\indep_{[n]}} Y^\indep_i -\sum_{i\in \mathcal
        B^\joint_{[n]}} Y^\joint_i.
    \end{split}
  \end{equation}
  Similar claims hold for the processes with primes.

  Inequality \eqref{e:badtimes} implies that
  \begin{equation}
    \label{e:Bsmalls}
    \mathbb P\left(|\mathcal B^\indep|\vee | \mathcal B'^\indep | \vee
      |\mathcal B^\joint|\vee | \mathcal B'^\joint | \ge K n^{b_5}\right)\le
    n^{-b_6}.
  \end{equation}
  When the complement of event in \eqref{e:Bsmalls} holds, then all
  four sums on the right-hand side of \eqref{e:crazydec} have length
  at most $2Kn^{b_5}$. Since all relevant increments have (uniformly
  bounded) exponential tails, by standard large deviation estimates we
  can choose $K'$ large so that the probability that the absolute
  value of these four sums exceed $K' n^{b_5}$ is at most $e^{-c
    n^{b_5}}$, for all $n$ large enough.

  Putting all these claims together we see that there is an event $A$
  satisfying $\mathbb P(A^c)\le n^{-b'}$ with $b'>0$, so that on $A$
  \begin{equation}
    \label{e:Xclose}
      |\widetilde X^\indep_n - \widetilde X^\joint_n|\le K 'n^{b_5}
      \qquad\text{and}\qquad
      |\widetilde X'^\indep_n - \widetilde X'^\joint_n|\le K 'n^{b_5}.
  \end{equation}
  Therefore,
  \begin{equation}
    \begin{split}
      &\Big|  \E^\joint_{\mathbf 0, \mathbf 0}\big[ f(\widetilde{X}_n/\sqrt{n})g(\widetilde{X}'_n/\sqrt{n}) \big]
      - \E^\indep_{\mathbf 0,\mathbf 0}\big[ f(\widetilde{X}_n/\sqrt{n})g(\widetilde{X}'_n/\sqrt{n})\big] \Big|
      \\& =
       \Big| \E\big[  f(\widetilde X^\joint_n/\sqrt{n})g(\widetilde X'^\joint_n/\sqrt{n}) -
        f(\widetilde X^\indep_n/\sqrt{n})g(\widetilde X'^\indep_n/\sqrt{n})
        \big]\Big|  \\
      &\leq   2 \Pr(A^c)\|f\|_\infty\|g\|_\infty
      \\&\quad + \E\Big[ \indset{A}
        \big| f(\widetilde X^\joint_n/\sqrt{n})g(\widetilde X'^\joint_n/\sqrt{n}) -
        f(\widetilde X^\indep_n/\sqrt{n})g(\widetilde
          X'^\indep_n/\sqrt{n})\big|\Big].
    \end{split}
  \end{equation}
  Observing that
  \begin{equation} \label{eq:22}
    \big| f(x)g(y) - f(x')g(y') \big| \leq
    \|g\|_\infty L_f \|x-x'\| + \|f\|_\infty L_g \|y-y'\|,
  \end{equation}
  and $b_5 < 1/2$, using \eqref{e:Xclose}, this implies the claim with
  $b\coloneqq b' \wedge (1/2-b_5)$.
\end{proof}

\begin{remark}[Functional limit theorem] \label{rem:inv-principle-2}
  For the quenched functional limit theorem in the case $d \ge 2$ we
  need also a functional version of the above theorem. The proof above
  can be adapted to show the analogue of \eqref{e:Xclose}, i.e.\ that
  on some event $\widetilde A$, satisfying $\Pr(\widetilde A^c) \le
  n^{-b'}$ with $b'>0$, we have
   \begin{align*}
     \sup_{t \in [0,1]} |\widetilde X^\indep_{[nt]} - \widetilde
     X^\joint_{[nt]}|\le  \widetilde K n^{b_5}   \qquad \text{and} \qquad
     \sup_{t \in [0,1]} |\widetilde X'^\indep_{[nt]} - \widetilde
     X'^\joint_{[nt]}|\le  \widetilde K n^{b_5}.
  \end{align*}
\end{remark}

\begin{proof}[Proof of Lemma~\ref{lemma:qCLTars}, case $d \ge 2$]
  Lemma~\ref{lemma:qCLTars} follows now easily from
  Lemma~\ref{lemma:couplerrord3lip} together with standard Berry-Esseen
  type estimates for the term
  $\E\big[ f(\widetilde{X}_n/\sqrt{n})g(\widetilde{X}'_n/\sqrt{n})\big]$
  appearing in \eqref{eq:couplerrord3lip} there.
  \smallskip

  The case $d=1$ requires a somewhat different approach and is given
  in Section~\ref{sec:cased1}.
\end{proof}

\begin{lemma}
  \label{lemma:triumphofmoderatedeviations}
  Assume that for some $b>1$, and any bounded Lipschitz function $f : \R^d \to \R$
  \begin{align}
    E_\omega\big[f(\widetilde{X}_{k^b}/k^{b/2}) \big] \mathop{\longrightarrow}_{k\to\infty}
    \widetilde{\Phi}(f) \quad
    \text{for $\Pr(\,\cdot\, | B_0)$-a.a.\ $\omega$},
  \end{align}
  where $\widetilde\Phi$ is some non-trivial centred $d$-dimensional normal law.
  Then we have for any bounded Lipschitz function $f$
  \begin{align}
    E_\omega\big[f(\widetilde{X}_{m}/m^{1/2})\big] \mathop{\longrightarrow}_{m\to\infty}
    \widetilde{\Phi}(f) \quad
    \text{for $\Pr(\,\cdot\, | B_0)$-a.a.\ $\omega$}.
  \end{align}
\end{lemma}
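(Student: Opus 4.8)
The plan is to interpolate the quenched convergence — which by assumption holds along the sparse sequence $m_k=k^b$ (I read $k^b$ as $\lfloor k^b\rfloor$ throughout) — to all integers $m$, by controlling the oscillation of $\widetilde X/\sqrt{\cdot}$ inside each block $[k^b,(k+1)^b)$. Writing $k=k(m)=\lfloor m^{1/b}\rfloor$ for $m$ in such a block, so that $k(m)\to\infty$ as $m\to\infty$, everything reduces to showing that
\[
\Delta_k\coloneqq\sup_{k^b\le m<(k+1)^b}\Abs{\frac{\widetilde X_m}{\sqrt m}-\frac{\widetilde X_{k^b}}{\sqrt{k^b}}}\longrightarrow 0 \quad\text{for $\Pr(\cdot\mid B_0)$-almost every realisation.}
\]
Once this is in hand, $\Pr(\Delta_k\to0\mid B_0)=1$ forces $P_\omega(\Delta_k\to0)=1$ for $\Pr(\cdot\mid B_0)$-a.a.\ $\omega$ (recall $B_0$ is $\omega$-measurable); fixing such an $\omega$ that moreover lies in the full-measure set on which the hypothesis holds, and letting $L_f$ be the Lipschitz constant of $f$, I would bound, for $m$ in the block of $k=k(m)$,
\[
\Abs{E_\omega\bigl[f(\widetilde X_m/\sqrt m)\bigr]-E_\omega\bigl[f(\widetilde X_{k^b}/\sqrt{k^b})\bigr]}\le E_\omega\bigl[\min\{2\norm{f}_\infty,L_f\Delta_k\}\bigr]\xrightarrow{m\to\infty}0
\]
by dominated convergence (the integrand is $\le 2\norm{f}_\infty$ and $\to0$ $P_\omega$-a.s.\ as $m\to\infty$), while $E_\omega[f(\widetilde X_{k(m)^b}/\sqrt{k(m)^b})]\to\widetilde\Phi(f)$ by the hypothesis; adding the two displays gives the claim.

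To get $\Delta_k\to0$ a.s.\ I would apply Borel--Cantelli, so it is enough to show $\sum_k\Pr(\Delta_k>\e\mid B_0)<\infty$ for every $\e>0$. Using $\sqrt m\ge\sqrt{k^b}$ and $0\le 1-\sqrt{k^b/m}\le 1-(1+1/k)^{-b/2}=:\eta_k=O(1/k)$ on the block, split
\[
\Delta_k\le\frac1{\sqrt{k^b}}\max_{k^b\le m<(k+1)^b}\norm{\widetilde X_m-\widetilde X_{k^b}}+\eta_k\,\frac{\norm{\widetilde X_{k^b}}}{\sqrt{k^b}}.
\]
The second term is easy: since $\E[Y_1\mid B_0]=\mathbf{0}$ and the $(Y_i)$ are i.i.d.\ with $\E[\norm{Y_1}^2\mid B_0]<\infty$ (Lemma~\ref{lem:regenstructure}), one has $\E[\norm{\widetilde X_{k^b}}^2\mid B_0]=k^b\,\E[\norm{Y_1}^2\mid B_0]$, so Chebyshev gives $\Pr(\eta_k\norm{\widetilde X_{k^b}}/\sqrt{k^b}>\e/2\mid B_0)=O(\eta_k^2)=O(k^{-2})$. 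For the first term, note that $(\widetilde X_m-\widetilde X_{k^b})_{k^b\le m\le(k+1)^b}$ are the partial sums of at most $n_k:=(k+1)^b-k^b+1=O(k^{b-1})$ i.i.d., symmetric, mean-zero increments with exponential tails (Lemma~\ref{lem:regenstructure}); a union bound over the $O(k^{b-1})$ values of $m$ together with a Bernstein-type estimate yields
\[
\Pr\Bigl(\max_{k^b\le m<(k+1)^b}\norm{\widetilde X_m-\widetilde X_{k^b}}>\tfrac{\e}{2}\sqrt{k^b}\;\Big|\;B_0\Bigr)\le C\,n_k\exp\bigl(-c\min\{k,k^{b/2}\}\bigr)=C\,n_k\exp\bigl(-c\,k^{\min\{1,b/2\}}\bigr),
\]
which is summable because $b>1$ makes $\min\{1,b/2\}>1/2$. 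Summing the two bounds gives $\sum_k\Pr(\Delta_k>\e\mid B_0)<\infty$ (then take $\e=1/j$ and intersect over $j$), hence $\Delta_k\to0$ a.s.

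The step I expect to be the main obstacle is the first-term estimate above. A crude second-moment (Chebyshev) bound on $\max_m\norm{\widetilde X_m-\widetilde X_{k^b}}$ would only produce $O(n_k/k^b)=O(1/k)$, which is \emph{not} summable; one genuinely has to use the exponential tails of the regeneration increments from Lemma~\ref{lem:regenstructure}, and it is exactly here that the hypothesis $b>1$ is needed (it makes the $n_k\asymp k^{b-1}$ many increments few compared with $\sqrt{k^b}$ on the relevant deviation scale, forcing the exponent $\min\{1,b/2\}$ above to exceed $1/2$). The remaining ingredients — the Bernstein/union bound, the Chebyshev estimate for the $\eta_k$-renormalisation mismatch, and the measure-theoretic passage from the annealed a.s.\ statement to convergence of the quenched expectations via conditioning and dominated convergence — are routine.
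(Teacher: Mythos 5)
Your proof is correct and follows essentially the paper's approach: control the maximal fluctuation of $\widetilde X$ inside each block $[k^b,(k+1)^b)$ by a deviation bound that is stretched-exponentially small in $k$ (the paper invokes a moderate-deviation principle, Thm.~3.7.1 of Dembo--Zeitouni, where you prove a Bernstein-type bound plus a union bound directly from the exponential tails of Lemma~\ref{lem:regenstructure}), sum over $k$ and apply Borel--Cantelli, then interpolate; your explicit handling of the $\sqrt{m}$-versus-$k^{b/2}$ normalisation mismatch and of the passage to the quenched expectations via dominated convergence just fills in steps the paper leaves implicit. One cosmetic remark: summability of $n_k e^{-ck^{\min\{1,b/2\}}}$ only requires a positive power of $k$ in the exponent, so $b>1$ is not what saves that estimate --- it is merely what makes the blocks long (length $\asymp k^{b-1}$), hence the interpolation nontrivial (in the paper's version it enters through the choice of the moderate-deviation scale $\varepsilon>(2-b)\vee 0$).
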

\begin{proof}
  Note that under $\Pr(\cdot|B_0)$, the increments
  $Y_i=\widetilde{X}_i-\widetilde{X}_{i-1}$ are i.i.d., centred and satisfy
  $\E[\exp(\lambda\cdot Y_1)|B_0] < \infty$ for all $\lambda$ in some
  neighbourhood of the origin. Let $(2-b) \vee 0 < \varepsilon < 1$. By a
  moderate deviation principle (e.g. Thm.~3.7.1 in \citet{MR2571413} where we
  set $n=b k^{b-1}$, $a_n=b k^{-1+\varepsilon}$, then $a_n\to0$, $n a_n
  \to\infty$, $\sqrt{a_n/n}=k^{-(b-\varepsilon)/2}$), we have for any $\delta>0$
  \begin{align}
    \label{eq:modedevbd1}
    \Pr\Big( \max_{1 \leq i \leq b k^{b-1}} \frac{|\widetilde{X}_i|}{k^{(b-\varepsilon)/2}} \geq \delta
    \Big| B_0 \Big)
    \leq C k^{b-1} \exp(-c k^\eta)
  \end{align}
  for some $C, c, \eta > 0$ ($\eta$ can be chosen in $(0,1-\varepsilon)$)
  and all $k \in \N$.
  Since the right-hand side of (\ref{eq:modedevbd1}) is summable in $k$,
  noting that $k^b - (k-1)^b \leq bk^{b-1}$,
  we obtain by Borel-Cantelli
  \begin{align}
    \limsup_{k\to\infty}
    \max_{(k-1)^b \leq \ell \leq k^b} \frac{|\widetilde{X}_\ell - \widetilde{X}_{(k-1)^b}|}{k^{b/2}}
    \leq
    \limsup_{k\to\infty}
    \max_{(k-1)^b \leq \ell \leq k^b} \frac{|\widetilde{X}_\ell - \widetilde{X}_{(k-1)^b}|}{k^{(b-\varepsilon)/2}}
    = 0
  \end{align}
  for $\Pr(\,\cdot\, | B_0)$-a.a.\ $\omega$.
\end{proof}

  \subsection{Proof of Theorem~\ref{thm:quenchedCLT}}

  The idea is to use the variance control provided by
  Lemma~\ref{lemma:couplerrord3lip} to obtain the quenched CLT.
  This approach seems to have appeared in the literature on
  random walk in random environments for the first time in
  \cite{BolthausenSznitman:2002}.
  \smallskip
  Let $f : \R^d\to\R$ be bounded and Lipschitz, $b'> 1/b \vee 1$ with $b$ from
  Lemma~\ref{lemma:qCLTars}.
  By \eqref{eq:L2regenCLT} and Markov's inequality,
  \begin{align}
    \Pr\left( \Big| E_\omega\big[f\big(\widetilde{X}_{[n^{b'}]}/\sqrt{[n^{b'}]}\big)\big]
        -\widetilde{\Phi}(f) \Big| > \varepsilon \right)
      \leq C_f \varepsilon^{-2} n^{-b' b},
  \end{align}
  which is summable, hence
  \begin{align}
    E_\omega\big[f\big(\widetilde{X}_{[n^{b'}]}/\sqrt{[n^{b'}]}\big)\big]
    \to \widetilde{\Phi}(f) \quad \text{a.s. as} \;\; n \to \infty
  \end{align}
  by Borel-Cantelli.
  Now Lemma~\ref{lemma:triumphofmoderatedeviations} yields
  \begin{align}
    E_\omega\big[f(\widetilde{X}_{m}/m^{1/2})\big]
    \mathop{\longrightarrow}_{m\to\infty} \widetilde{\Phi}(f) \quad
    \text{for $\Pr(\,\cdot\, | B_0)$-a.a.\ $\omega$}.
  \end{align}

  Put
  \begin{align}
    V_n \coloneqq \max \{ m \in \Z_+ : T_m \leq n \}.
  \end{align}
  We have ${V_n}/{n} \to 1/\E[\tau_1]$ a.s.\ as $n\to\infty$ by
  classical renewal theory, in fact
  \begin{align}
    \label{eq:LILrenewal}
    \limsup_{n\to\infty} \frac{\big| V_n-n/\E[\tau_1]\big|}{\sqrt{n \log\log n}}
    < \infty \quad \text{a.s.}
  \end{align}
  (see, e.g. Thm.~III.11.1 in \citet {MR916870}).
 Since the differences of the $T_m$ are i.i.d.\ with exponential tail bounds,
  \begin{align}
    \label{eq:maxrenewalgaplogarithmic}
    \limsup_{n\to\infty} \frac{\max_{j \leq n}\{j - T_{V_j}\}}{\log n} < \infty
    \quad \text{a.s.}
  \end{align}

  Recall that $X_{T_{V_n}}=\widetilde{X}_{V_n}$. Now
  \begin{align}
    P_\omega\big( \norm{X_n-\widetilde{X}_{V_n}} \geq \log^2n \, \big)
    \to 0 \qquad \text{a.s.}
  \end{align}
  by \eqref{eq:maxrenewalgaplogarithmic} and the fact that $X$ has
  bounded increments. Furthermore, for any $\varepsilon>0$
  \begin{align}
    P_\omega\big( | V_n-n/\E[\tau_1] | \geq n^{1/2+\varepsilon}\, \big)
    \to 0 \qquad \text{a.s.}
  \end{align}
  by \eqref {eq:LILrenewal}.

  Moreover, there exist $\beta \in (1/2,1)$ and $\gamma \in (\beta/2,1/2)$
  such that for any $\theta \geq 0$,
  \begin{align}
    \label{eq:maxincrrw1}
    \limsup_{n\to\infty} \sup_{|k-\theta n| \leq n^\beta}
    \frac{\big| \widetilde{X}_k - \widetilde{X}_{[\theta n]}\big|}{n^\gamma}
    \to 0 \qquad \text{a.s.}
  \end{align}
  To prove \eqref{eq:maxincrrw1} note that we can take w.l.o.g.\
  $\theta=0$. By Doob's $L^6$-inequality, for any $\varepsilon>0$
  \begin{align}
    \label{eq:maxincrrw1b}
    \Pr\Big( \sup_{k \leq n^\beta} \big| \widetilde{X}_k - \widetilde{X}_0 \big|
    > \varepsilon n^\gamma \Big)
    & \leq
    \varepsilon^{-6} n^{-6\gamma}
    \E\Big[ \norm{\widetilde{X}_{n^\beta} - \widetilde{X}_0}^6 \Big]
    \leq \mathrm{Const.} \, \varepsilon^{-6} n^{3\beta-6\gamma}
  \end{align}
  (note that $\E_0[\|S_k\|^6] \leq C k^3$ for a random walk $(S_k)$ whose
  increments are centred and have bounded $6$th moments), thus we can choose
  $\beta>1/2>\gamma$ and both sufficiently close to $1/2$ that
  $3\beta-6\gamma<-1$ and the right-hand side of \eqref{eq:maxincrrw1b} becomes
  summable in $n$. The usual Borel-Cantelli argument allows to conclude
  \eqref{eq:maxincrrw1}.

  Now write
  \begin{align}
    \frac{X_n}{\sqrt{n}} & =
    \frac{X_n-\widetilde{X}_{V_n}}{\sqrt{n}}
    + \frac{\widetilde{X}_{V_n}-\widetilde{X}_{[n/\E[\tau_1]]}}{\sqrt{n}}
    + \frac{\widetilde{X}_{[n/\E[\tau_1]]}}{\sqrt{n/\E[\tau_1]}}
      \sqrt{1/\E[\tau_1]}
  \end{align}
  and let $\Phi$ be defined by
  $\Phi(f)\coloneqq\widetilde{\Phi}\big(f((\E[\tau_1])^{-1/2} \, \cdot)\big)$,
  i.e.\ ${\Phi}$ is the image measure of $\widetilde{\Phi}$ under $x \mapsto
  x/\sqrt{\E[\tau_1]}$. Then
  \begin{align*}
    & \left| E_\omega\big[f(X_{n}/n^{1/2}) \big] - \Phi(f) \right| \\
    & \leq
    2 \norm{f}_\infty \Big(
    P_\omega\big( \norm{X_n-\widetilde{X}_{V_n}} \geq \log^2n \,\big)
    + P_\omega\big( | V_n-n/\E[\tau_1] | \geq n^{\beta}\, \big) \\
    & \hspace{5em} {} +
    P_\omega\big( \sup\nolimits_{|k-n/\E[\tau_1]| \leq n^\beta}
    \big| \widetilde{X}_k - \widetilde{X}_{[n/\E[\tau_1]]}\big| > n^\gamma \big)
    \Big) \\
    & \quad
    + L_f \big( \log^2 n/\sqrt{n} + n^{\gamma-1/2} \big) \\
    & \quad
    + \left| E_\omega\big[f\big((\E[\tau_1])^{-1/2} \times
      \widetilde{X}_{[n/\E[\tau_1]]}/\sqrt{n/\E[\tau_1]}\big)\big]
      - \widetilde{\Phi}\big(f((\E[\tau_1])^{-1/2} \, \cdot)\big) \right| \\
    & \longrightarrow 0 \qquad \text{a.s.\ as $n\to\infty$}.
  \end{align*}
  This proves \eqref{eq:quenchedCLT} for bounded Lipschitz functions
  $f$, which in particular implies that the laws of $(X_n/\sqrt{n})$
  under $P_\omega$ are tight, for almost all $\omega$.
  Finally, note that a general continuous bounded $f$ can be approximated
  by bounded Lipschitz functions in a locally uniform way.
  \hfill \qed

\subsection{Proof of Lemma~\ref{lemma:qCLTars}, case $d=1$}
\label{sec:cased1}

Here we prove Lemma~\ref{lemma:qCLTars} in the case $d=1$. In the
cases $d \ge 2$ our proof made substantial use of the fact that two
$d$-dimensional random walks typically do not spend much time near
each other: The ``approximate collision time'' during the first $n$
steps is typically $O(\log n$) in $d=2$ and $O(1)$ in $d \ge 3$. Thus,
we could couple two walks on the same cluster with two walks on
independent copies of the cluster and the error incurred becomes
negligible when dividing by $\sqrt{n}$ in the CLT (see the proofs of
Lemmas\ \ref{l:hatcoupling} and \ref{lemma:couplerrord3lip}). An
analogous strategy can not be ``na\"ively'' implemented in $d=1$
because now we expect typically $O(\sqrt{n})$ hits of the two walks,
so using simple ``worst case bounds'' in the region where the two
walkers are close would yield an (overly pessimistic) error term that
does not vanish upon dividing by $\sqrt{n}$. Instead, we now use a
martingale decomposition for the dynamics of $(\widehat X_n, \widehat
X_n')_n$ under $\widehat{\Psi}^\joint$, combined with a quantitative
martingale CLT from \cite{Rackauskas:1995} to estimate the Kantorovich
distance from the bivariate normal.

\smallskip

Consider as a toy example a Markov chain on $\Z$ that behaves as a
symmetric simple random walk as long as $X_n \ne 0$. Upon hitting $0$
the chain stays there for some random time and leaves this state
symmetrically. If the distribution of the time the chain spends in $0$
is suitably controlled, one can prove a central limit theorem with
non-trivial limit by using a martingale central limit theorem. A
similar argument works for our two random walks (observed along joint
renewal times) where in this case ``being at zero'' corresponds to the
event that the two walks are closer together than $K \log n$ for some
appropriate constant $K$. This is the ``black box'' region in which we
cannot couple the two walks to independent copies. If they are more
than $n^{b}$ for small $b$ away from each other then we can couple
with very good control of the error
(cf.~Lemma~\ref{lemma:couplingjointregen}). Then we make use of the
symmetries of the model and the fact that in $d=1$, the walks
$\widehat{X}$ and $\widehat{X}'$ have many overcrossings to verify
that the error terms stemming from times inside the black box up to
time $n$ are in fact $o(\sqrt{n})$ (in a suitably quantitative sense).

\medskip

Let $(\widehat X_n, \widehat X_n')_n$ be a pair of walks in $d=1$ on
the same cluster observed along joint renewal times as in
(\ref{eq:defhatXell}), which is in itself a Markov chain when
averaging over the cluster, and let
$\widehat\Psi^\joint((x,x'),(y,y'))$ be its transition probability, as
defined in (\ref{eq:PSihatjoint}) in Remark~\ref{rem:Xichain}. We
write $\widehat{\mathcal{F}}_n \coloneqq \sigma\big(\widehat X_i,
\widehat X_i', 0 \le i \le n\big)$ for the canonical filtration of
this chain.

Furthermore set
\begin{align*}
  \phi_1(x,x') &  \coloneqq \sum_{y, y'} (y-x)\widehat\Psi^\joint((x,x'),(y,y')), \\
  \phi_2(x,x') &  \coloneqq \sum_{y, y'}
  (y'-x')\widehat\Psi^\joint((x,x'),(y,y')), \\
  \phi_{11}(x,x') &  \coloneqq \sum_{y, y'}
  \big(y-x-\phi_1(x,x')\big)^2\widehat\Psi^\joint((x,x'),(y,y')), \\
  \phi_{22}(x,x') &  \coloneqq \sum_{y, y'}
  \big(y'-x'-\phi_2(x,x')\big)^2\widehat\Psi^\joint((x,x'),(y,y')), \\
  \phi_{12}(x,x') &  \coloneqq \sum_{y, y'}
  \big(y-x-\phi_1(x,x')\big)\big(y'-x'-\phi_2(x,x')\big)
  \widehat\Psi^\joint((x,x'),(y,y')).
\end{align*}
Note that by Lemma~\ref{lemma:jointrenewaltails} these are uniformly bounded, i.e.\
\begin{equation}
\label{eq:phiunifbd}
C_\phi \coloneqq \norm{\phi_1}_\infty \vee \norm{\phi_2}_\infty
\vee \norm{\phi_{11}}_\infty \vee \norm{\phi_{12}}_\infty
\vee \norm{\phi_{22}}_\infty < \infty.
\end{equation}
Define
\begin{align}
  A_n^{(1)} & \coloneqq \sum_{j=0}^{n-1} \phi_1(\widehat X_j,\widehat X_j'), \quad
  A_n^{(2)} \coloneqq \sum_{j=0}^{n-1} \phi_2(\widehat X_j,\widehat X_j'), \notag \\
  A_n^{(11)} & \coloneqq \sum_{j=0}^{n-1} \phi_{11}(\widehat X_j,\widehat X_j'), \quad
  A_n^{(22)} \coloneqq \sum_{j=0}^{n-1} \phi_{22}(\widehat X_j,\widehat X_j'), \quad
  A_n^{(12)} \coloneqq \sum_{j=0}^{n-1} \phi_{12}(\widehat X_j,\widehat X_j'), \notag \\
  \label{defMnMnhat}
  M_n & \coloneqq \widehat X_n - A_n^{(1)},
  \qquad M_n' \coloneqq \widehat X_n' - A_n^{(2)}.
\end{align}
Then, $(M_n)$, $(M_n')$, $(M_n^2-A_n^{(11)})$, $({M_n'}^2-A_n^{(22)})$
and $(M_n M_n' - A_n^{(12)})$ are martingales whose increments have
exponential tails (by Lemma~\ref{lemma:jointrenewaltails}, which in
particular shows that exponential tail bounds can be chosen uniformly
in $n$).

Write $\widehat{\sigma}^2\coloneqq\sum_{y, y'} y^2\widehat\Psi^\indep((0,0),(y,y'))$
for the variance of a single increment under $\widehat{\Psi}^\indep$
(recall \eqref{eq:PSihatindep}).
\begin{lemma}
\label{lem:1djointCLTXhat}
There exist $C>0$, $b \in (0,1/4)$ such that for all bounded
Lipschitz continuous $f :\R^2 \to \R$
\begin{align}
\left| \E\Big[ f\big(\tfrac{\widehat X_n}{\widehat{\sigma}\sqrt{n}},
\tfrac{\widehat X'_n}{\widehat{\sigma}\sqrt{n}}\big) \Big]
- \E\big[ f(Z) \big] \right| \le L_f \frac{C}{n^b}
\end{align}
where $Z$ is two-dimensional standard normal and $L_f$ the Lipschitz
constant of $f$.
\end{lemma}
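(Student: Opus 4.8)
The plan is to deduce the bivariate CLT for $(\widehat X_n,\widehat X'_n)$ from the martingale decomposition \eqref{defMnMnhat}, $\widehat X_n=M_n+A^{(1)}_n$ and $\widehat X'_n=M'_n+A^{(2)}_n$, by feeding $(M_n,M'_n)$ into a quantitative vector martingale CLT; the two quantities that must then be controlled are the rate at which the predictable brackets of $M,M'$ stabilise and the rate at which the drifts $A^{(1)}_n,A^{(2)}_n$ become negligible. I would begin by recording the algebraic consequences of the symmetries of the model. By the spatial homogeneity of $\widehat\Psi^\joint$ and $\widehat\Psi^\indep$ (Remark~\ref{rem:Xichain}), each of $\phi_1,\phi_2,\phi_{11},\phi_{12},\phi_{22}$ depends on $(x,x')$ only through $u\coloneqq x-x'$; write $\psi_1(u),\dots$ for the resulting functions on $\Z$. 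Under $\widehat\Psi^\indep$ the two coordinates are two independent single-walk regeneration increments, each symmetric with variance $\widehat\sigma^2$ (Lemma~\ref{lem:regenstructure}), so there the drift functions vanish, the diagonal variance functions equal $\widehat\sigma^2$, and the cross function vanishes; combining this with the total-variation estimate of Lemma~\ref{lemma:couplingjointregen} and the uniform exponential tails of Lemma~\ref{lemma:jointrenewaltails} gives
\[
\abs{\psi_1(u)}+\abs{\psi_2(u)}+\abs{\phi_{11}(u)-\widehat\sigma^2}+\abs{\phi_{22}(u)-\widehat\sigma^2}+\abs{\phi_{12}(u)}\le C e^{-c\norm{u}},\qquad u\in\Z.
\]
(The reflection $x\mapsto-x$ and the interchange of the two walks moreover give $\psi_1=-\psi_2$ and $\phi_{11}=\phi_{22}$, which is what makes the limiting covariance equal to $\widehat\sigma^2$ times the identity and hence explains the normalisation by $\widehat\sigma\sqrt n$ in both coordinates.)

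Since $A^{(1)}_n=-A^{(2)}_n$ by the first of these relations, the difference $D_n\coloneqq\widehat X_n-\widehat X'_n$ is itself, under $\widehat\Psi^\joint$, a Markov chain on $\Z$ whose increment from a state $u$ has mean $2\psi_1(u)$ — exponentially localised and, by oddness of $\psi_1$, of vanishing total mass — and variance tending to $2\widehat\sigma^2$, with uniformly exponential tails. Thus $D$ is a recurrent finite-variance chain (being a localised perturbation of a centred, hence recurrent, random walk), and standard one-dimensional local-limit / Green's-function estimates give $\E\big[\sum_{j<n}e^{-c\norm{D_j}}\big]=O(\sqrt n)$ as well as $\E[\#\{j<n:\norm{D_j}\le a\}]=O(a\sqrt n)$; this is where the heuristic from the start of this section — that in $d=1$ the two walks are close for only $O(\sqrt n)$ of their first $n$ joint regenerations — is made precise. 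It follows immediately that
\[
\E\Big[\Abs{\tfrac1n\langle M\rangle_n-\widehat\sigma^2}\Big]=\E\Big[\Abs{\tfrac1n\sum_{j<n}\big(\phi_{11}(D_j)-\widehat\sigma^2\big)}\Big]\le\tfrac Cn\,\E\Big[\sum_{j<n}e^{-c\norm{D_j}}\Big]=O(n^{-1/2}),
\]
and the same rate holds for $\tfrac1n\langle M'\rangle_n-\widehat\sigma^2$ and for $\tfrac1n\langle M,M'\rangle_n$.

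The delicate point — and the one I expect to be the main obstacle — is the drift term: a crude bound only gives $\E[(A^{(1)}_n)^2]=O(n)$, i.e.\ $A^{(1)}_n$ of the same order $\sqrt n$ as $M_n$ itself, so one must extract a cancellation. I would do this by solving a Poisson equation: because $2\psi_1$ is exponentially localised, odd, and therefore of vanishing total mass, the equation $\mathcal{L}_D h=-2\psi_1$ — with $\mathcal{L}_D$ the generator of the chain $D$ — has a bounded, odd solution $h$, with $h(u)$ converging exponentially fast to two opposite constants as $u\to\pm\infty$ (the usual solvability statement for recurrent one-dimensional chains with summable mean-zero right-hand side, obtained from the potential kernel). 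Then $\widetilde M_n\coloneqq h(D_n)-h(0)+2A^{(1)}_n$ is a martingale with bounded increments, and since $h$ is essentially constant away from the origin, the conditional variance of its increment from a state $u$ is $O(e^{-c\norm{u}})$; hence $\E\langle\widetilde M\rangle_n=O(\sqrt n)$ by the previous paragraph, so $\E[\widetilde M_n^2]=O(\sqrt n)$ and, by Cauchy--Schwarz, $\E\abs{A^{(1)}_n}\le\tfrac12\E\abs{\widetilde M_n}+\norm{h}_\infty=O(n^{1/4})$.

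To conclude I would apply the quantitative martingale CLT of \cite{Rackauskas:1995} to the one-dimensional martingales $sM_n+tM'_n$: by the preceding two paragraphs their normalised brackets converge at rate $O(n^{-1/2})$ in $L^1$ and their increments have uniformly bounded exponential moments (Lemma~\ref{lemma:jointrenewaltails}), so, together with the Cram\'er--Wold device (or a multivariate version quoted directly), one obtains a Kantorovich bound $\sup\{\abs{\E\,g(M_n/(\widehat\sigma\sqrt n),M'_n/(\widehat\sigma\sqrt n))-\E\,g(Z)}:g\ \text{$1$-Lipschitz}\}\le Cn^{-b_0}$ for some $b_0\in(0,1/4)$. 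Finally, from $\widehat X_n=M_n+A^{(1)}_n$, $\widehat X'_n=M'_n-A^{(1)}_n$ and $\E\norm{(\widehat X_n-M_n,\widehat X'_n-M'_n)}\le2\E\abs{A^{(1)}_n}=O(n^{1/4})$ one sees that the Kantorovich distance between the laws of $(\widehat X_n,\widehat X'_n)/(\widehat\sigma\sqrt n)$ and $(M_n,M'_n)/(\widehat\sigma\sqrt n)$ is $O(n^{-1/4})$; combining this with the previous display and testing against $f/L_f$ yields the lemma, with $b\coloneqq b_0\wedge\tfrac14$.
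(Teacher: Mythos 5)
Your overall architecture coincides with the paper's: decompose $\widehat X_n=M_n+A^{(1)}_n$, $\widehat X'_n=M'_n+A^{(2)}_n$ as in \eqref{defMnMnhat}, feed the martingale part into the quantitative martingale CLT of \cite{Rackauskas:1995}, and show that the predictable brackets stabilise and the drifts are $o(\sqrt n)$ at a polynomial rate. Your treatment of the drift via a Poisson equation for the difference chain $D_n=\widehat X_n-\widehat X'_n$ is genuinely different from the paper's (which uses the crossing types $W_{n,i}$, the cancellation \eqref{eq:Dnjsymmetries} between opposite crossings, and McLeish's mixingale maximal inequality in Lemma~\ref{lem:RnandCnbds}), and if it worked it would even give a slightly better rate. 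However, both of your pillars rest on properties of $D$ that you assert rather than prove, and these are exactly the properties the paper's model-specific work supplies. The coupling estimate of Lemma~\ref{lemma:couplingjointregen} controls the kernel of $D$ only when $\|x-x'\|$ is large; in a bounded window around the diagonal it gives no information, so $D$ is \emph{not} a ``localised perturbation of a centred random walk'' in any a priori controlled sense: its transition mechanism near $0$ could, for all the quoted lemmas say, be strongly attractive, in which case $D$ would spend a positive fraction of time near the diagonal and your claims that $D$ is recurrent with $\E[\#\{j<n:|D_j|\le a\}]=O(a\sqrt n)$ would fail. Ruling this out requires an environment-level argument --- the corridor construction of Step~1 of Lemma~\ref{lemma:separation1}, its $d=1$ analogue \eqref{eq:TiminSibd1}, and the comparison with ladder-time renewals in \eqref{eq:S1-T1tails} and \eqref{eq:Intailbd} --- none of which appears in your proposal; ``standard local-limit / Green's-function estimates'' are not available off the shelf for this inhomogeneous chain.

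The same gap undermines the corrector step: the ``usual solvability statement obtained from the potential kernel'' is a statement about spatially homogeneous walks, whereas existence of a \emph{bounded} solution $h$ of $\mathcal L_D h=-2\psi_1$ with exponentially fast convergence to constants at $\pm\infty$ needs Green's-function control of $D$ near the diagonal of exactly the kind you have not established; moreover the correct centering condition is $\sum_u\pi_D(u)\psi_1(u)=0$ with $\pi_D$ the (non-uniform near $0$) invariant measure of $D$, not mere oddness of $\psi_1$ --- it does hold, but via the reflection symmetry of $\pi_D$, which should be argued. Two smaller points: a quantitative Cram\'er--Wold reduction of a bivariate bounded-Lipschitz bound to one-dimensional projections is not automatic, so you should invoke the multivariate form of the Ra\v{c}kauskas result directly (as the paper does, via the eigenvalue bound \eqref{eq:Rackauskas.type.bd}); and the bracket control it requires is in a norm slightly stronger than $L^1$ (the paper uses $\E[R_n^{3/2}]$, cf.\ \eqref{eq:Rn.moment.bd}), which your occupation estimate would have to be upgraded to. In short, the skeleton is right and the Poisson-equation idea is an attractive alternative to the overcrossing/mixingale argument, but as written the proof has a genuine gap at the near-diagonal behaviour of the two walks, which is the heart of the $d=1$ case.
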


By Lemma~\ref{lemma:couplingjointregen}, there exist
$C_1, K >0$ such that for $x, x' \in \Z$ with $|x-x'| \ge K \log n$
\begin{align}
\label{eq:phi1bd1}
\big| \phi_1(x,x') \big|, \big| \phi_2(x,x') \big|,
\big| \phi_{12}(x,x') \big| & \le \frac{C_1}{n^2}, \\
\label{eq:phi11bd1}
\big| \phi_{11}(x,x') - \widehat{\sigma}^2 \big| ,
\big| \phi_{22}(x,x') - \widehat{\sigma}^2 \big| & \le \frac{C_1}{n^2}.
\end{align}
Put
\begin{equation}
R_n \coloneqq \#
\big\{ 0 \le j \le n : | \widehat X_j - \widehat X_j' | \le K \log n \big\}.
\end{equation}
We expect that $R_n =o(n)$ in probability (see below for a proof),
which combined with \eqref{eq:phi1bd1}, \eqref{eq:phi11bd1} would yield
\begin{align}
\label{eq:1dquadvar:prelimclaim1}
& \frac{A_n^{(11)}}{n} \to \widehat{\sigma}^2, \quad
\frac{A_n^{(22)}}{n} \to \widehat{\sigma}^2, \quad
\frac{A_n^{(12)}}{n} \to 0 \,  \qquad \text{in $\mathbb{P}(\cdot | B_0)$-probability as $n\to\infty$}.
\end{align}
Note that \eqref{eq:1dquadvar:prelimclaim1} together with our (exponential)
tail bounds for the differences would already imply a two-dimensional CLT
for $(M_n/\sqrt{n}, M_n'/\sqrt{n})$ by standard martingale CLT results.
Since we require quantitative control in the martingale CLT,
we have to estimate a little more carefully.
\smallskip

For $n \in \N$, let
\begin{equation}
Q_n \coloneqq \left(\begin{array}{cc}
\phi_{11}(\widehat X_{n-1},\widehat X_{n-1}') &
\phi_{12}(\widehat X_{n-1},\widehat X_{n-1}') \\
\phi_{12}(\widehat X_{n-1},\widehat X_{n-1}') &
\phi_{22}(\widehat X_{n-1},\widehat X_{n-1}')
\end{array}\right)
\end{equation}
be the conditional covariance matrix given $\widehat{\mathcal{F}}_{n-1}$
of the $\R^2$-valued random variable $(M_n-M_{n-1}, M'_n-M'_{n-1})$,
let $\lambda_{n,1} \ge \lambda_{n,2} \ge 0$ be its
eigenvalues. We obtain from \eqref{eq:phi1bd1}, \eqref{eq:phi11bd1}
and \eqref{eq:phiunifbd} together with well-known stability properties
for the eigenvalues of symmetric matrices that
\begin{align}
\big| \lambda_{j+1,1} - \widehat{\sigma}^2 \big| +
\big| \lambda_{j+1,2} - \widehat{\sigma}^2 \big|
\le C_2 \ind{| \widehat X_j - \widehat X_j' | \le K \log n}
+ \frac{C_2}{n^2} \ind{| \widehat X_j - \widehat X_j' | > K \log n}
\end{align}
for some $C_2 < \infty$, see, e.g., \cite[\S41]{MR0158519}
or \cite[Ch.~IV, Thm.~3.1]{StewartSun1990}. In particular,
\begin{align}
\label{eq:Rackauskas.type.bd}
\sum_{i=1}^2
\left| n \widehat{\sigma}^2 - \sum\nolimits_{j=1}^n \lambda_{j,i} \right|
\le \frac{C_2}n + C_2 R_n.
\end{align}

\begin{lemma}
\label{lem:RnandCnbds}
\begin{enumerate}
\item There exist $0 \le \delta_R < 1/2$, $c_R < \infty$ such that
\begin{align}
\label{eq:Rn.moment.bd}
\E\big[ R_n^{3/2} \big] \le c_R n^{1+\delta_R} \quad \text{for all}\; n.
\end{align}
\item There exist $\delta_C > 0$, $c_C < \infty$ such that
\begin{align}
\label{eq:Cnsmall}
\E\left[ \frac{|A_n^{(1)}|}{\sqrt{n}}\right], \,
\E\left[ \frac{|A_n^{(2)}|}{\sqrt{n}}\right] \le \frac{c_C}{n^{\delta_C}}
\qquad \text{for all}\; n.
\end{align}
\end{enumerate}
\end{lemma}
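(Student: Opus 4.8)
The plan is to reduce both assertions to the study of the difference chain
\[
D_j \coloneqq \widehat X_j - \widehat X'_j ,\qquad j\ge 0,
\]
which under $\Pr^\joint_{\mathbf 0,\mathbf 0}$ is a Markov chain on $\Z$: indeed $(\widehat X_j,\widehat X'_j)_j$ is Markov with transition $\widehat\Psi^\joint$ (Remark~\ref{rem:Xichain}) and, by the spatial homogeneity of $\widehat\Psi^\joint$, the conditional law of $D_{j+1}$ given $\widehat{\mathcal F}_j$ depends on $D_j$ only. Three properties of $D$ will be used. \emph{(a)} The increments $D_{j+1}-D_j$ have exponential tails, uniformly in $j$ and in the starting configuration (Lemma~\ref{lemma:jointrenewaltails}). \emph{(b)} We have $\E[D_{j+1}-D_j\mid\widehat{\mathcal F}_j]=\phi_1(\widehat X_j,\widehat X'_j)-\phi_2(\widehat X_j,\widehat X'_j)$; since under $\widehat\Psi^\indep$ the two walks evolve in independent environments and every individual regeneration increment is symmetric (Lemma~\ref{lem:regenstructure}), the corresponding quantities in the independent set-up vanish, so Lemma~\ref{lemma:couplingjointregen} together with \emph{(a)} gives $|\phi_i(x,x')|\le Ce^{-c\norm{x-x'}}$. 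Moreover $\phi_1(x,x')$ depends on $s\coloneqq x-x'$ only, is odd in $s$ (reflection symmetry of the model), and $\phi_2=-\phi_1$ (exchangeability of the two walks). Writing $\psi(s)$ for this common value we thus have an odd function with $|\psi(s)|\le Ce^{-c|s|}$ and $\sum_{s\in\Z}\psi(s)=0$, the drift of $D$ equals $2\psi(D_j)$, and $A^{(1)}_n=\sum_{j=0}^{n-1}\psi(D_j)=-A^{(2)}_n$. \emph{(c)} Forcing corridors exactly as in Step~1 of the proof of Lemma~\ref{lemma:separation1} shows that, uniformly in the starting configuration, the two walks can be made to move apart by any prescribed distance within a fixed number $r$ of joint regeneration steps, with probability bounded away from $0$.

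\emph{Part (i).} Note that $R_n$ is the occupation time up to time $n$ of the window $\{\,|s|\le K\log n\,\}$ by $D$. The plan is to prove $\E[R_n]\le C\,K\,\sqrt n\,\log n$ for $n\ge 2$ and then to conclude via the deterministic bound $R_n\le n$: $\E[R_n^{3/2}]\le\sqrt n\,\E[R_n]\le CK\,n\log n\le c_R n^{1+\delta_R}$ for any fixed $\delta_R\in(0,1/2)$ and all $n$, the finitely many small $n$ being absorbed into $c_R$. To obtain the bound on $\E[R_n]$ I would pass from $D$ to a genuine martingale by a Doob transform: since $\psi$ is summable with vanishing sum — and, by reflection symmetry, the invariant measure $\pi$ of the recurrent chain $D$ is symmetric, so $\sum_s\psi(s)\pi(s)=0$ — the Poisson equation $\bar P W - W = -2\psi$, with $\bar P$ the transition operator of $D$, has a \emph{bounded} solution $W\colon\Z\to\R$, expressed through the potential kernel of $D$ (which is $\sim|x|/\widehat\sigma^2$). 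Then $\widetilde D_j\coloneqq D_j+W(D_j)$ is a martingale, still with uniformly exponential increments; crucially, being a martingale, its increments have conditional mean zero, so their conditional second moment equals their conditional variance, which by \emph{(c)} (applied to a fixed number $r$ of steps chosen larger than $2\norm{W}_\infty$) is bounded below by a positive constant uniformly. For such a martingale a standard excursion decomposition controls the occupation time of a window $[-m,m]$ up to time $n$: the number of excursions between $\{|\widetilde D|>m\}$ and $\{|\widetilde D|\le 2m\}$ is at most the number of down-crossings of $[m,2m]$ by the submartingale $|\widetilde D|$, hence $\le\E|\widetilde D_n|/m+2\le\sqrt{Cn}/m+2$ by Doob's inequality and $\E[\widetilde D_n^2]\le Cn$, while each excursion has expected length $O(m^2)$ by optional stopping applied to $\widetilde D_j^2-\langle\widetilde D\rangle_j$ together with the uniform lower variance bound; so the occupation time of $[-m,m]$ is $O(m\sqrt n+m^2)$ in expectation. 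Since $|D_j|\le K\log n$ forces $|\widetilde D_j|\le K\log n+\norm{W}_\infty$, the choice $m=K\log n+\norm{W}_\infty$ yields the claim. The same argument, applied to a window of width $2$ around $s+W(s)$, gives as a by-product the uniform Green's-function bound $\E[\#\{j\le n:D_j=s\}]\le C\sqrt n$ for all $s\in\Z$, which is needed next.

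\emph{Part (ii).} With the corrector $u\coloneqq-\tfrac12 W$, which solves $u-\bar P u=\psi$, one has $\psi(D_j)=u(D_j)-\E[u(D_{j+1})\mid\widehat{\mathcal F}_j]$, hence
\[
A^{(1)}_n=\sum_{j=0}^{n-1}\psi(D_j)=u(D_0)-u(D_n)+N_n ,\qquad
N_n\coloneqq\sum_{j=1}^n\bigl(u(D_j)-\E[u(D_j)\mid\widehat{\mathcal F}_{j-1}]\bigr),
\]
with $N_n$ a martingale. The first two terms are bounded by $2\norm{u}_\infty$. For $N_n$, since $u$ is bounded and — being built from $\psi$ via the linearly growing potential kernel — varies only slowly away from the origin, its oscillation over a bounded window around a point $s$ is $O(1/(1+|s|)^2)$; together with the exponential tails of the increments (which control the event that $D_j$ makes a large jump or changes sign) this gives $\Var\bigl(u(D_j)\mid\widehat{\mathcal F}_{j-1}\bigr)\le C/(1+|D_{j-1}|)^2$. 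Therefore
\[
\E[N_n^2]=\E\Bigl[\sum_{j=1}^n\Var\bigl(u(D_j)\mid\widehat{\mathcal F}_{j-1}\bigr)\Bigr]
\le C\sum_{s\in\Z}\frac{\E[\#\{j\le n:D_j=s\}]}{(1+|s|)^2}\le C'\sqrt n
\]
by the Green's-function bound from part (i). Hence $\E[(A^{(1)}_n)^2]\le C''\sqrt n$, so $\E[|A^{(1)}_n|/\sqrt n]\le(\E[(A^{(1)}_n)^2])^{1/2}/\sqrt n\le C'''n^{-1/4}$; since $|A^{(2)}_n|=|A^{(1)}_n|$, this proves \eqref{eq:Cnsmall} with any $\delta_C<1/4$.

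\emph{Main obstacle.} The delicate point throughout is the behaviour of $D$ near the origin, i.e.\ in the window where the two walks are within $O(\log n)$ of one another: there $\widehat\Psi^\joint$ is not explicitly described, no local central limit theorem is available, and a priori $D$ might linger there for an anomalously long time, which would wreck both the occupation-time bound and the bound on $\E|A^{(1)}_n|$. This is precisely what is rescued by the \emph{exponential} decay of the drift $\psi$: it makes the associated potential, equivalently the Doob transform $W$, \emph{bounded}, so that $D$ is only a bounded perturbation of a genuine martingale and the only structural input still needed near the origin is the uniform non-degeneracy supplied by the corridor construction of Lemma~\ref{lemma:separation1}. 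A secondary technical point is to make rigorous the existence and regularity (boundedness, and fast decay of $u(s)-u(\pm\infty)$) of the corrector $u$; this rests on representing $u$ through the potential kernel of $D$, using $a(x)=|x|/\widehat\sigma^2+O(1)$ and the facts that $\psi$ is odd, exponentially decaying, and orthogonal to the (symmetric) invariant measure of $D$.
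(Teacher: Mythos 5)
Your overall architecture (treat $D_j=\widehat X_j-\widehat X_j'$ as a spatially homogeneous Markov chain, correct it to a martingale, and get (i) from occupation-time estimates and (ii) from the Poisson-equation decomposition $A^{(1)}_n=u(D_0)-u(D_n)+N_n$) is coherent and genuinely different from the paper's argument, and several of your preliminary observations are sound: $D$ is indeed Markov, $\phi_2=-\phi_1$ by exchangeability plus reflection symmetry, and $|\phi_i(x,x')|\le Ce^{-c\norm{x-x'}}$ follows from Lemma~\ref{lemma:couplingjointregen} combined with the uniform exponential tails of Lemma~\ref{lemma:jointrenewaltails}. The decisive step, however, is asserted rather than proved: the existence of a \emph{bounded} solution $W$ of $\overline{P}W-W=-2\psi$ (equivalently the corrector $u$, together with the decay of its oscillation at infinity and the Green's-function bound $\E[\#\{j\le n: D_j=s\}]\le C\sqrt n$) is exactly the kind of quantitative potential theory for $D$ that is not available here. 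The chain $D$ is only known up to an exponentially small error \emph{away} from the diagonal; inside any fixed window around $0$ its kernel is the ``black box'' that the whole Section~3.4 construction is designed to avoid analysing. To carry out your plan you would first have to establish irreducibility and recurrence of $D$, uniqueness (hence symmetry) of its invariant measure, existence and linear growth $a_D(x)\sim|x|/\widehat\sigma^2$ of its potential kernel, and convergence/boundedness of $\sum_{j\ge0}\E_x[\psi(D_j)]$ — for a null-recurrent chain this does not follow from $\sum_s\psi(s)\pi(s)=0$ alone but needs local-CLT-type or Green's-function comparisons with the unperturbed walk. None of this is citable off the shelf for $D$, and it is of comparable difficulty to Lemma~\ref{lem:RnandCnbds} itself; your closing remark treats it as a ``secondary technical point'', which understates the gap. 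A smaller but real issue of the same flavour is the uniform lower bound on the conditional variance of the $\widetilde D$-increments inside the window: the corridor estimate gives non-degeneracy of $D$ over $r\asymp\norm{W}_\infty$ steps, but you then need to run the exit-time/optional-stopping argument on the $r$-step skeleton (or control the Lipschitz constant of $W$), and $\norm{W}_\infty$ is only defined once the unproven corrector exists.

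For contrast, the paper's proof never constructs a corrector and needs no information about $\widehat\Psi^\joint$ near the diagonal beyond non-degeneracy. For (i) it stochastically dominates the time between successive entries into the $K\log n$-window by a ladder epoch of a true one-dimensional random walk (via the coupling of Lemma~\ref{lemma:couplingjointregen}), so the number $I_n$ of ``black boxes'' satisfies $\E[I_n^2]\le Cn$, while the duration of each black box is at most $n^{b_2}$ up to stretched-exponentially small probability by the separation estimate \eqref{eq:TiminSibd1}; this gives $\E[R_n^2]\le Cn^{1+2b_2}$ and hence \eqref{eq:Rn.moment.bd}. For (ii) it exploits the cancellation you are after in a softer way: the black-box increments $D_{n,j}$ of $A^{(1)}$ have conditional means that cancel between the ``opposite'' crossing types $W_{n,j}$ (your $\psi$-oddness in disguise), the type chain is uniformly exponentially mixing by Lemma~\ref{lem:seplemd=1}, and McLeish's maximal inequality for mixingales then yields $\E[|A^{(1)}_n|]\le Cn^{1/2-\delta}$ without any Green's-function or corrector input. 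If you want to salvage your route, the work goes precisely into the potential-theoretic lemmas listed above; as written, the proof is not complete.
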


Now we have all ingredients for the
\begin{proof}[Proof of Lemma~\ref{lem:1djointCLTXhat}]
Let $f :\R^2 \to \R$ be bounded Lipschitz continuous with
Lipschitz constant $L_f$ and $Z$ two-dimensional standard normal.
We obtain from \eqref{eq:Rackauskas.type.bd},
\eqref{eq:Rn.moment.bd} and Corollary~1.3 in \cite{Rackauskas:1995}
that
\begin{align}
\label{eq:MnRackauskasbound}
\left| \E\Big[ f\big(\tfrac{M_n}{\widehat{\sigma}\sqrt{n}},
\tfrac{M'_n}{\widehat{\sigma}\sqrt{n}}\big) \Big]
- \E\big[ f(Z) \big] \right| \le L_f \frac{C}{n^{b'}} \qquad
\text{for all}\; n
\end{align}
for some $C < \infty$ and $b'=(\tfrac{1}{2}-\delta_R)/3$.
(Read $X_k=\big( (M_k-M_{k-1})/\sqrt{\widehat{\sigma}^2 n},
(M'_k-M'_{k-1})/\sqrt{\widehat{\sigma}^2 n}\big)$ in
\cite[Cor.~1.3]{Rackauskas:1995}, note that
$\sup_{k} \E\big[\norm{(M_k-M_{k-1}, M'_k-M'_{k-1})}^3\big] < \infty$
by the uniform exponential tail bounds from
Lemma~\ref{lemma:jointrenewaltails}.)

Combining \eqref{eq:MnRackauskasbound} and \eqref{eq:Cnsmall}
yields
\begin{align}
\left| \E\Big[ f\big(\tfrac{\widehat X_n}{\widehat{\sigma}\sqrt{n}},
\tfrac{\widehat X'_n}{\widehat{\sigma}\sqrt{n}}\big) \Big]
- \E\big[ f(Z) \big] \right|
\le L_f \frac{C}{n^{b'}} + L_f  \frac{c_C}{\widehat{\sigma} n^{\delta_C}}.
\end{align}
\end{proof}
\medskip

To prepare the proof of Lemma~\ref{lem:RnandCnbds} we need some further
notation: Put (with suitable $b \in (0,1/2)$ and $K \gg 1$, see below) for
$n\in\N$ $\mathcal{R}_{n,0}\coloneqq0$ and for $i \in \N$
\begin{align*}
  \mathcal{D}_{n,i} & = \min\{ m > \mathcal{R}_{n,i-1} : \abs{\widehat X_m - \widehat X_m'} \ge n^b\},\\
  \mathcal{R}_{n,i} & = \min\{ m >\mathcal{D}_{n,i}: \abs{\widehat X_m - \widehat X_m'} \le K \log n \},
\end{align*}
then $[\mathcal{R}_{n,i-1}, \mathcal{D}_{n,i})$ is the $i$th ``black box interval'' on
``coarse-graining level'' $n$ (note that for $m \in \cup_i [\mathcal{R}_{n,i-1}, \mathcal{D}_{n,i})$
the coupling result, Lemma~\ref{lemma:couplingjointregen}, does not help;
whereas for $m \in \cup_i [\mathcal{D}_{n,i}, \mathcal{R}_{n,i})$, we can couple
$(\widehat{X}_m, \widehat{X}'_m)$ with a pair of walks on independent
copies of the cluster up to a small error term).

We distinguish four possible types of such black box intervals,
depending on the ordering of $\widehat X$ and $\widehat X'$ at the
beginning and end of the interval: Set
\begin{equation}
\label{eq:Widef}
W_{n,i} \coloneqq \begin{cases}
1 & \text{if} \; \; \widehat{X}_{\mathcal{R}_{n,i-1}} > \widehat{X}'_{\mathcal{R}_{n,i-1}},
\widehat{X}_{\mathcal{D}_{n,i}} < \widehat{X}'_{\mathcal{D}_{n,i}}, \\
2 & \text{if} \; \; \widehat{X}_{\mathcal{R}_{n,i-1}} > \widehat{X}'_{\mathcal{R}_{n,i-1}},
\widehat{X}_{\mathcal{D}_{n,i}} > \widehat{X}'_{\mathcal{D}_{n,i}}, \\
3 & \text{if} \; \; \widehat{X}_{\mathcal{R}_{n,i-1}} < \widehat{X}'_{\mathcal{R}_{n,i-1}},
\widehat{X}_{\mathcal{D}_{n,i}} > \widehat{X}'_{\mathcal{D}_{n,i}}, \\
4 & \text{if} \; \; \widehat{X}_{\mathcal{R}_{n,i-1}} < \widehat{X}'_{\mathcal{R}_{n,i-1}},
\widehat{X}_{\mathcal{D}_{n,i}} < \widehat{X}'_{\mathcal{D}_{n,i}}.
\end{cases}
\end{equation}
By construction and the strong Markov property of
$(\widehat X_m, \widehat X_m')_m$ [with a grain of salt because
at a time $\mathcal{R}_{n,i}$, the difference may be $[K \log n]$ or
$[K \log n]-1$, etc.] we have the following:
For each $n\in\N$,
\begin{align}
& (\mathcal{R}_{n,i}-\mathcal{D}_{n,i})_{i=1,2,\dots} \;\; \text{is an i.i.d.\ sequence, and} \\
& (W_{n,i},\mathcal{D}_{n,i}-\mathcal{R}_{n,i-1})_{i=2,3,\dots} \;\; \text{is a Markov chain};
\end{align}
furthermore, the two objects are independent, the transition probabilities of
$(W_{n,i},\mathcal{D}_{n,i}-\mathcal{R}_{n,i-1})_{i}$ depend only on the first (the ``type'') coordinate,
and a bound of the form analogous to Lemma~\ref{lemma:separation1}
holds.
\begin{lemma}[Separation and overcrossing lemma for $d=1$]
\label{lem:seplemd=1}
We can choose $0<b_2<1/4$, $b_3, b_4 >0$ such that
\begin{align}
\label{eq:TiminSibd1}
\Pr\big( \mathcal{D}_{n,i}-\mathcal{R}_{n,i-1} \geq n^{b_2} \, \big| \,
W_{n,i}=w \big)
\leq e^{-b_3 n^{b_4}}, \quad w \in \{1,2,3,4\}, \; n \in \N.
\end{align}
Furthermore, there exists $\epsilon>0$ such that uniformly in $n$
\begin{align}
\label{eq:Wntransbd}
\Pr(W_{n,2}= a' \, | \, W_{n,1}= a) \ge \epsilon
\end{align}
for all pairs of types $(a,a') \in \{1,2,3,4\}^2$ where a transition is
``logically possible'' (cf.~\eqref{eq:Widef}).
\end{lemma}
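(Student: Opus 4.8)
The two assertions of the lemma call for different arguments, and \eqref{eq:Wntransbd} is the genuinely new ingredient. I would dispose of \eqref{eq:TiminSibd1} first, by a reduction. Since $\{W_{n,i}=w\}\cap\{\mathcal D_{n,i}-\mathcal R_{n,i-1}\ge n^{b_2}\}\subseteq\{\mathcal D_{n,i}-\mathcal R_{n,i-1}\ge n^{b_2}\}$, it is enough to combine an \emph{unconditional} separation estimate $\Pr(\mathcal D_{n,i}-\mathcal R_{n,i-1}\ge n^{b_2})\le e^{-cn^{b_4'}}$ with a merely polynomial lower bound $\Pr(W_{n,i}=w)\ge n^{-C}$ and then to take $b_4<b_4'$, as then $n^{C}e^{-cn^{b_4'}}\le e^{-b_3 n^{b_4}}$ for $n$ large. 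The unconditional estimate is the $d=1$ analogue of Lemma~\ref{lemma:separation1}: by the strong Markov property at $\mathcal R_{n,i-1}$ one may start from an arbitrary configuration with $\norm{\widehat X_0-\widehat X_0'}\le K\log n$; force the two walks along opposite ``corridors'' of open sites (as in Step~1 of that proof) for $\e_1\log n$ steps, which has probability $\ge\delta_1^{\e_1\log n}=n^{-c}$ and brings them to distance $\e_1\log n$, then couple with the symmetric mean-zero walk $\widehat X_k^\indep-\widehat X_k'^\indep$ of $\widehat\Psi^\indep$ and use that such a walk reaches distance $n^b$ within $n^{2b+\e}$ steps with overwhelming probability; on failure one restarts, the number of restarts being geometric with success probability $\ge n^{-c}$, and exhausting the failure probability costs $\le n^{b_2}$ steps provided $b$ and the auxiliary exponents are small enough (this is where $b_2<1/4$ enters). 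The polynomial lower bound $\Pr(W_{n,i}=w)\ge n^{-C}$ comes from the same kind of corridor construction, forcing the descent $[\mathcal D_{n,i-1},\mathcal R_{n,i-1})$ and then an appropriately directed separation so as to realise the prescribed pair of signs.

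For \eqref{eq:Wntransbd} I would first use the strong Markov property at $\mathcal R_{n,1}$ together with the fact that the type chain has transition probabilities depending only on the type coordinate to reduce to the following statement: starting the pair from any configuration with $\norm{\widehat X_0-\widehat X_0'}\le K\log n$ and $\widehat X_0-\widehat X_0'$ of a prescribed sign, the first time $\mathcal D$ with $\norm{\widehat X_{\mathcal D}-\widehat X_{\mathcal D}'}\ge n^b$ has $\mathrm{sign}(\widehat X_{\mathcal D}-\widehat X_{\mathcal D}')$ equal to $+$, and equal to $-$, each with probability $\ge\epsilon$. (Conditioning on $W_{n,1}=a$ pins down the sign at $\mathcal R_{n,1}$ up to an $n^{-cK}$-probability event, namely a sign flip during the descent, ruled out by the exponential tails of the increments; the logically possible $a'$ are precisely those compatible with this sign.) The central object is the difference chain $D_k\coloneqq\widehat X_k-\widehat X_k'$: by spatial homogeneity of $\widehat\Psi^\joint$ the law of $D_{k+1}-D_k$ depends on the past only through $D_k$, so $D$ is a Markov chain on $\Z$; by the swap symmetry of the two walks (equivalently, reflection invariance of the cluster law) its kernel satisfies $q(x,\cdot)=q(-x,-\cdot)$, so $D$ started from $0$ is symmetric and the relative drift $\mu(x)\coloneqq\E_x[D_1-x]$ is odd with $\mu(0)=0$; its increments have uniformly exponential tails by Lemma~\ref{lemma:jointrenewaltails}; and by Lemma~\ref{lemma:couplingjointregen} they coincide, up to error $\le Cn^{-cK}$ per step, with those of the symmetric mean-zero walk $\widehat X^\indep-\widehat X'^\indep$ whenever $\abs{D_k}>K\log n$.

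Granting this, the reduced claim says that $D$, started from $\abs{D_0}\le K\log n$, exits $(-n^b,n^b)$ on each side with probability bounded below. For the independent walk this is immediate from optional stopping, the two side-exit probabilities being $\tfrac12\pm O(K\log n/n^b)\to\tfrac12$. To transfer this to $D$, write $p_+(y)$ for the probability that $D$ started from $y$ exits $(-n^b,n^b)$ on the $+$ side and let $\rho$ be the exit time; by optional stopping (legitimate since $\rho$ has exponential tails by part \eqref{eq:TiminSibd1}) and the oddness of $\mu$ together with the symmetry $\E_y[L_\rho(-x)]=\E_{-y}[L_\rho(x)]$ for the local times,
\begin{equation*}
n^b\big(2p_+(D_0)-1\big)+O(1)=\E_{D_0}[D_\rho]=D_0+\sum_{x>0}\mu(x)\big(\E_{D_0}[L_\rho(x)]-\E_{-D_0}[L_\rho(x)]\big).
\end{equation*}
One then bounds the Green's-function difference $\abs{\E_{D_0}[L_\rho(x)]-\E_{-D_0}[L_\rho(x)]}$ by $O(\mathrm{polylog}\,n)$ uniformly in $x$ (a continuity-in-the-starting-point statement, to be obtained from a diffusive rescaling of $D$ together with uniform non-degeneracy of the increments near $0$), and uses $\sum_{x>0}\abs{\mu(x)}<\infty$ (exponential decay of the relative drift, from the exponential mixing of the cluster) to conclude that the right-hand side is $D_0+O(\mathrm{polylog}\,n)=o(n^b)$; hence $p_+(D_0)\to\tfrac12$ and \eqref{eq:Wntransbd} holds with, say, $\epsilon=1/4$. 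The main obstacle is precisely the in--black-box analysis of $D$: inside $\{\abs{D}\le K\log n\}$, whose radius grows with $n$, Lemma~\ref{lemma:couplingjointregen} is useless, so $D$ must be controlled there directly, and establishing the above Green's-function estimate -- equivalently, an invariance principle for $D$ with only a polylogarithmically small drift correction -- is where the real work lies.
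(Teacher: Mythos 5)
Your treatment of \eqref{eq:TiminSibd1} is essentially the paper's: a restart/corridor argument combined with the coupling to $\widehat\Psi^\indep$ and the $d=1$ exit estimates of Lemma~\ref{lemma:harmonicballsd3}, and your reduction of the conditional bound to an unconditional one via a polynomial lower bound on $\Pr(W_{n,i}=w)$ is a legitimate way to handle the (future-measurable) conditioning. One imprecision: you cannot simply ``couple with the symmetric walk and use that it exits $[-n^b,n^b]$ within $n^{2b+\e}$ steps whp'', because the coupling of Lemma~\ref{lemma:couplingjointregen} is useless whenever the difference re-enters an $O(\log n)$-neighbourhood of $0$, which a one-dimensional walk does before exiting with probability $1-O(\log n/n^b)$; the accounting must be done excursion-wise (couple only while the separation exceeds $\tfrac12\e_1\log n$, restart the corridor construction on each return), exactly as in Steps 2--6 of Lemma~\ref{lemma:separation1}. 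Since you do invoke a restart structure with per-attempt success probability $n^{-c}$, this is a presentational rather than a substantive defect.

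For \eqref{eq:Wntransbd} you take a genuinely different and much stronger route: you aim to show that the exit-side probabilities of the difference chain $D$ from $(-n^b,n^b)$, started at distance $O(\log n)$ from $0$, converge to $1/2$, via the decomposition $\E_{D_0}[D_\rho]=D_0+\sum_{x>0}\mu(x)\big(\E_{D_0}[L_\rho(x)]-\E_{-D_0}[L_\rho(x)]\big)$. The summability $\sum_x\abs{\mu(x)}<\infty$ does follow from Lemma~\ref{lemma:couplingjointregen} together with the uniform exponential tails, and the mirror symmetry of the kernel is correct. But the crucial ingredient, the uniform bound $\abs{\E_{D_0}[L_\rho(x)]-\E_{-D_0}[L_\rho(x)]}=O(\mathrm{polylog}\,n)$, is exactly where the conclusion is hiding and it is not proved: it amounts to a quantitative invariance principle / Green's-function estimate for $D$ \emph{including its behaviour inside the black box} $\{\abs{D}\le K\log n\}$, where the chain is defined only implicitly through the joint regeneration structure and no a priori non-degeneracy or homogenisation is available from the lemmas at hand. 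You acknowledge this yourself (``where the real work lies''), so as it stands the proposal does not prove \eqref{eq:Wntransbd}. Note that the paper deliberately avoids needing any such refined control: since only a constant lower bound is required, it suffices to observe that before the separation reaches $n^b$ the difference returns to an $\e\log n$-neighbourhood of $0$ a geometric number of times with mean of order $n^b/\log n$ (by the $d=1$ exit probabilities for the coupled independent walk), and each such visit produces an overcrossing with probability at least $n^{-c'\e}$ by a corridor construction; choosing $\e$ with $c'\e<b$ makes the product of (number of attempts)$\times$(success probability per attempt) large, giving a uniform constant bound without ever analysing $D$ near $0$ beyond crude corridor estimates. If you want to keep your martingale approach, you would have to supply the missing local-time comparison; otherwise the softer excursion-counting argument is the efficient way to close the proof.
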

\begin{proof}[Proof sketch]
The proof of \eqref{eq:TiminSibd1} is analogous to that of
Lemma~\ref{lemma:separation1}, making use of the $d=1$-case
of Lemma~\ref{lemma:harmonicballsd3}.
\smallskip

For \eqref{eq:Wntransbd} the crucial point is to show that
when $\widehat{X}$ and $\widehat{X}'$ have come closer than
$K \log n$ at time $m=\mathcal{R}_{n,i}$ with $\widehat{X}_m > \widehat{X}'_m$, say,
there is a chance of at least $\delta > 0$ (uniformly in $n$)
that they reverse their roles before reaching a distance of $n^b$,
i.e.\ there is $j < \mathcal{D}_{n,i+1}$ such that $\widehat{X}_j \le \widehat{X}'_j$.

To see this, write $D_j\coloneqq\widehat{X}_j - \widehat{X}'_j$, pick
$0 < \varepsilon \ll K$ (to be tuned later). When the process $D$
starts from $K \log n$, there is a chance $\ge \delta' > 0$ that it
reaches $\varepsilon \log n$ before $2K \log n$ within less than
$\log^3 n$ steps (use the coupling from
Lemma~\ref{lemma:couplingjointregen} and analogous results for simple
random walk on $\Z^1$, note that the probability that the coupling
fails is at most $Ce^{-c \varepsilon n} \log^3 n$); once $D_j \le
\varepsilon \log n$ there is a chance of at least $\exp(-c'
\varepsilon \log n) = n^{-c' \varepsilon}$ that $D$ hits $(-\infty,0]$
with the next $(\varepsilon \log n)/2$ steps (construct suitable
``corridors'' as in Step~1 of the proof of
Lemma~\ref{lemma:separation1}). When $D$ does not hit $(-\infty,0]$
but instead reaches $2K \log n$ observe that
\[
\Pr\big( \text{$D$ hits $K \log n$ before $n^b$} \, \big| \,
D_0 = 2K \log n \big) \approx
\frac{n^b- 2K \log n}{n^b- K \log n} = 1 - \frac{K \log n}{n^b- K \log n}.
\]
By the Markov property, we will thus have a geometric number
of excursions from $2 K \log n$ that reach $K \log n$ but not $n^b$,
and each of these has a chance $\ge \delta' n^{-c' \varepsilon}$
to hit $(-\infty,0]$. Thus, if $\varepsilon$ is chosen
so small that $c' \varepsilon < b$ there is a substantial chance that
one of them will be successful.
\end{proof}

Note that \eqref{eq:Wntransbd} guarantees that the chain $(W_{n,i})_{i}$ is
uniformly in $n$ (exponentially) mixing. By symmetry of the
construction,
\begin{align}
\label{eq:Wnjsymmetries}
\Pr(W_{n,j}=1) = \Pr(W_{n,j}=3) \quad \text{and} \quad
\Pr(W_{n,j}=2) = \Pr(W_{n,j}=4) \quad \text{for all}\; j, n,
\end{align}
and the same holds for the stationary distribution $\pi_{W,n}$ of
$(W_{n,j})_j$.

\begin{proof}[Proof of Lemma~\ref{lem:RnandCnbds}]
Let $Y$ have distribution
\[
\Pr(Y \ge \ell) = \widehat{\Psi}^\indep\Big( \inf\{ m \ge 0 : \widehat{X}_m <
\widehat{X}' _m\} \ge \ell \, \Big| \,
(\widehat{X}_0, \widehat{X}'_0)=(1,0) \Big), \quad\ell \in \N
\]
and let $V$ be an independent, Bernoulli($1-1/n$)-distributed random
variable. A simple coupling construction based on
Lemma~\ref{lemma:couplingjointregen} shows that
$\mathcal{R}_{n,1}-\mathcal{D}_{n,1}$ is stochastically larger than
\begin{equation}
\label{eq:goodtimesbound}
\big( (1-V) + VY \big) \wedge n
\end{equation}
(by choosing $K$ appropriately, we can ensure that the probability
that the relevant coupling between $\widehat{\Psi}^\indep$ and
$\widehat{\Psi}^\joint$ fails during the first $n$ steps is less than
$1/n$). In particular, by well-known tail probability estimates for
ladder times of one-dimensional random walks, there exist $c>0$ and
$c_Y >0$ such that uniformly in $n$,
\begin{align}
\label{eq:YprimeLaplaceTf}
\E\big[ e^{-\lambda ((1-V) + VY)} \big] & \le \exp\big(-c_Y \sqrt{\lambda} \big),
\quad \lambda \ge 0 \qquad \text{and} \\[1ex]
\label{eq:S1-T1tails}
\Pr(\mathcal{R}_{n,1}-\mathcal{D}_{n,1} \ge \ell) & \ge \frac{c}{\sqrt{\ell}},
\quad \ell=1,\dots,n.
\end{align}

Let $I_n\coloneqq\max\{ i : T_i \le n\}$ be the number of ``black boxes'' that
we see up to time $n$. We have $I_n = \mathcal O(\sqrt n)$ in probability and in
fact
\begin{align}
\label{eq:Inbd2}
\E[I_n^2] & \le C n
\end{align}
as can be seen from \eqref{eq:S1-T1tails}
by comparison with a renewal process with inter-arrival
law given by the return times of a (fixed) one-dimensional random
walk.
\smallskip

More quantitatively, there is $c>0$ such that for $1 \le k \le n$,
\begin{align}
\label{eq:Intailbd}
\Pr(I_n \ge k) \le \exp\big( - ck^2/n \big),
\end{align}
so in particular
\begin{align}
\label{eq:Inuibound}
\E\big[ I_n \ind{I_n \ge n^{3/4}}\big]
= \sum_{k=\lceil n^{3/4} \rceil}^n \Pr(I_n \ge k)
\le n e^{-c\sqrt{n}}.
\end{align}
{\small (\eqref{eq:Intailbd} is a standard result for lower deviations of
  a heavy-tailed renewal process. For completeness' sake (and lack of
  a point reference), here are some details: Let $Y'_1, Y'_2, \dots$
  be i.i.d.\ copies of $\big( (1-V) + VY \big)$ from
  \eqref{eq:goodtimesbound}, then for $1 \le k \le n$
\begin{align}
 \Pr(I_n \ge k) = & \,
\Pr\big( (Y'_1 \wedge n) + \cdots + (Y'_k \wedge n) \le n \big)
= \Pr\big(Y'_1 + \cdots + Y'_k \le n\big) \notag \\
\le & \,
e^{\lambda n} \Big(\E\big[\exp(-\lambda Y_1)\big]\Big)^k
\le \exp\big( \lambda n - k c_Y \sqrt{\lambda} \big)
\end{align}
for any $\lambda > 0$ by \eqref{eq:YprimeLaplaceTf},
now put $\lambda \coloneqq (c_Yk/n)^2$.)}
\smallskip

Note that
\begin{align}
\label{eq:Rnbd1}
R_n \le \sum_{j=1}^{I_n+1} (\mathcal{D}_{n,j}-\mathcal{R}_{n,j-1}),
\end{align}
thus, using \eqref{eq:TiminSibd1}, indeed $R_n = o(n)$ in probability, and
\eqref{eq:Inbd2} together with \eqref{eq:Rnbd1},
\eqref{eq:TiminSibd1} implies
\eqref{eq:Rn.moment.bd}:
\[
\E\big[R_n^2\big] \le n^2 \Pr\big( \exists \, j \le n :
\mathcal{D}_{n,j+1}-\mathcal{R}_{n,j} \ge n^{b_2} \big) + n^{2 b_2} \E[I_{n+1}^2] \le C n^{1+2b_2},
\]
and $\E\big[R_n^{3/2}\big] \le \big( \E\big[R_n^2\big] \big)^{3/4}$.
\medskip

For \eqref{eq:Cnsmall} we must make use of
cancellations
in the increments of $A_n^{(1)}, A_n^{(2)}$, making use of the
fact that ``opposite'' types of crossings of $\widehat{X}$ and
$\widehat{X}'$ appear asymptotically with the same frequency. Let
\[
  D_{n,m} \coloneqq
  A_{\mathcal{D}_{n,m}}^{(1)}-A_{\mathcal{R}_{n,m-1}}^{(1)},
  \quad
  D'_{n,m} \coloneqq
  A_{\mathcal{D}_{n,m}}^{(2)}-A_{\mathcal{R}_{n,m-1}}^{(2)}.
\]
By symmetry,
\begin{align}
\label{eq:Dnjsymmetries}
\E\big[ D_{n,j} \big] = 0, \quad
\E\big[ D_{n,j} \, \big| \, W_{n,j}=1\big] & =
-\E\big[ D_{n,j} \, \big| \, W_{n,j}=3\big],
\notag \\[0.3ex]
\E\big[ D_{n,j} \, \big| \, W_{n,j}=2\big] & =
-\E\big[ D_{n,j} \, \big| \, W_{n,j}=4\big],
\end{align}
\eqref{eq:TiminSibd1} and \eqref{eq:phiunifbd} together show that
for some $C < \infty$, $b>0$ uniformly in $j, n \in \N$, $w \in \{1,2,3,4\}$
\begin{align}
\label{eq:Dnjabsbound}
\E\big[ |D_{n,j}| \, \big| \, W_{n,j}=w\big] \le C n^b
\end{align}
and analogously for $D_{n,j}'$.
Put $\mathcal{G}_j \coloneqq \widehat{\mathcal{F}}_{\mathcal{D}_{n,j}}$ (the
$\sigma$-field of the $\mathcal{D}_{n,j}$-past) for $j \in \N$, for $j \le 0$
let $\mathcal{G}_j$ be the trivial $\sigma$-algebra.
$D_{n,j}, D_{n,j}'$ are $\mathcal{G}_j$-adapted for $j \in \N$.
Since for $k < m$
\[
\E\left[ \left. D_{n,m} \right| \mathcal{G}_k \right]
= \E\big[ \E[D_{n,m} \, | W_{n,m}] \, \big| \, \mathcal{G}_k \big]
\]
by construction and $(W_{n,j})_j$ is (uniformly in $n$) exponentially
mixing we have, observing \eqref{eq:Dnjsymmetries},
\eqref{eq:Wnjsymmetries}, and \eqref{eq:Dnjabsbound}
\begin{align}
\E\left[ \big( \E\left[ \left. D_{n,m} \right| \mathcal{G}_{m-j} \right] \big)^2 \right]
\le C e^{-cj}, \quad m, j \in \N, \; n \in \N
\end{align}
for some $C,c \in (0,\infty)$ and analogous bounds for $D_{n,m}'$.
Let $S_{n,m} \coloneqq \sum_{j=1}^m D_{n,j}$, $S'_{n,m} \coloneqq \sum_{j=1}^m D'_{n,j}$ then
for each $n \in \N$, $(S_{n,m})_m$ is a mixingale with uniformly (in $n$)
controlled mixing rate (see \cite{HallHeyde:1980}, p.~19).

Thus, using McLeish's analogue of Doobs $\mathcal{L}^2$-inequality for
mixingales (e.g.\ \cite{HallHeyde:1980}, Lemma~2.1), we have
\begin{align}
\label{eq:Doobformixingale}
\E\left[ \max_{m=1,\dots,n^{3/4}} S_{n,m}^2 \right] \le K n^{3/4},
\end{align}
hence
\begin{align}
\label{eq:SnInfluctbd}
\E\left[ \frac{|S_{n,I_n}|}{\sqrt{n}} \ind{I_n \le n^{3/4}} \right]
\le \frac{1}{\sqrt{n}}
\left( \E\left[ \max_{m=1,\dots,n^{3/4}} S_{n,m}^2 \right]\right)^{1/2}
\le \frac{K^{1/2}}{n^{1/8}}.
\end{align}
By (\ref{eq:phi1bd1}) we have
$A_n^{(1)} = \sum_{j=1}^{I_n} D_{n,j} + O(1)$, so
\begin{align}
  \E\left[ \frac{|A_n^{(1)}|}{\sqrt{n}}\right]
\le \frac{c}{\sqrt{n}} & +
\frac{1}{\sqrt{n}}
\E\left[ \frac{|S_{n,I_n}|}{\sqrt{n}} \ind{I_n \le n^{3/4}} \right] +
\frac{1}{\sqrt{n}} C n^{1+b_2}
\E\big[ I_n \ind{I_n \ge n^{3/4}}\big] \notag \\
\label{eq:CnL1bd}
& {} + \frac{1}{\sqrt{n}} cn
\Pr\big(\exists \, j \le n \, : \, \mathcal{D}_{n,j}-\mathcal{R}_{n,j-1} \ge n^{b_2} \big).
\end{align}
Using \eqref{eq:SnInfluctbd}, \eqref{eq:Inuibound} and
\eqref{eq:TiminSibd1}, respectively on the last three terms on the
right hand side (and analogous bounds for $A_n^{(2)}$) yields \eqref{eq:Cnsmall}.
\end{proof}

\begin{remark}
The arguments used in the proof of Lemma~\ref{lem:RnandCnbds} can
be used to show that there exists $C < \infty$ such that for all $n \in \N$
\begin{align}
\label{eq:DoobtypeboundforXhat}
\sup_{\ell \in \N_0} \E\left[ \sup_{0 \le k \le n^{2/3}}
\left( \widehat{X}_{\ell+k} - \widehat{X}_{\ell} \right)^2 \right] \leq
C n^{2/3}
\end{align}
and analogously for $\widehat{X}'$.
\end{remark}
\begin{proof}[Sketch of proof]
  Decompose $\widehat{X}_{n} = M_n + A_n^{(1)}$ (recall \eqref{defMnMnhat}).
  The analogue of \eqref{eq:DoobtypeboundforXhat} for the martingale $(M_n)$
  holds by Doob's $\mathcal{L}^2$-inequality (note that by the uniform exponential tail
  bounds we have $\sup_{n\in\N} \E\left[(M_n-M_{n-1})^2\right] < \infty$).

  To obtain the analogue of \eqref{eq:DoobtypeboundforXhat} for the process
  $(A_n^{(1)})$ note that by the Markov property of $(\widehat{X}, \widehat{X}')$
  it suffices to verify that uniformly in $x_0,x_0' \in \Z$
  \begin{align}
    \E\left[ \left. \sup_{0 \le k \le n^{2/3}} \left( A_{k}^{(1)} -
        A_{0}^{(1)} \right)^2
      \, \right| \, (\widehat{X}_0, \widehat{X}'_0)=(x_0,x_0') \right] \leq
    C n^{2/3}
  \end{align}
  for some $C<\infty$.
  This can be proved by expressing $A_k^{(1)}$ as a sum of mixingale increments as
  in the proof of Lemma~\ref{lem:RnandCnbds}
  and suitably adapting the argument around
  \eqref{eq:Doobformixingale}--\eqref{eq:CnL1bd}.
\end{proof}

\subsubsection{Transferring from $\widehat{X}$ to $\widetilde{X}$
and completion of the proof \\of
Lemma~\ref{lemma:qCLTars},
for the case $d=1$}

Note that Lemma~\ref{lem:1djointCLTXhat} gives almost the required result except
that it speaks about $(\widehat{X}, \widehat{X}')$, a pair of walks observed
along \emph{joint} regeneration times, rather than two walks each observed along
its individual sequence of regeneration times. Here, we indicate how to remedy
this. \smallskip

Let (with regeneration times $T_m$, $T_m'$ as in (\ref{eq:defRi}--\ref{eq:defRi'})
and $T^{\Sim}_m$ from \eqref{eq:12})
\begin{align*}
   L_n & \coloneqq \max\bigl\{ m \le n: T_m \in \{T_0',T_1',\dots\} \bigr\}, \\
   L_n' & \coloneqq \max\bigl\{ m \le n: T_m' \in \{T_0,T_1,\dots\} \bigr\}, \\
  \widehat L_n & \coloneqq \max\bigl\{ m \le n: T_m^{\Sim} \le
 T_n \bigr\},  \\
  \widehat L_n' & \coloneqq \max\bigl\{ m \le n: T_m^{\Sim} \le
 T_n' \bigr\},
\end{align*}
i.e., $L_n$ and $L_n'$ are the indices of the last joint regeneration time
before the $n$-th with respect to the walks $X$ respectively  $X'$ and
$\widehat L_n$ respectively $\widehat L_n'$ is the corresponding
number of joint regeneration times,
in particular
\begin{align}
  \label{eq:2}
  \bigl(\widetilde{X}_{L_n}, T_{L_n}\bigr) =
  \bigl(\widehat{X}_{\widehat L_n}, T^{\Sim}_{\widehat L_n}\bigr)
  \quad \text{and} \quad
  \bigl(\widetilde{X}'_{L_n'}, T'_{L_n'}\bigr) =
  \bigl(\widehat{X}'_{\widehat L_n'}, T^{\Sim}_{\widehat L_n'}\bigr) .
\end{align}

\begin{lemma}
\label{lem:jointrenewalcounts}
There exist $C < \infty$, $q \in (0,1]$ such that for all $n \in \N$
\begin{align}
\label{eq:jointrenewalcounts1}
\E^\joint_{\mathbf 0, \mathbf 0}\left[ \left( n- L_n \right)^2 \right] \leq C,
\end{align}
\begin{align}
\label{eq:jointrenewalcounts2}
\E^\joint_{\mathbf 0, \mathbf0}\left[ \left( \widehat{L}_n - nq \right)^2 \right] \leq C n,
\end{align}
and analogously for $L_n'$ and $\widehat{L}_n'$.
\end{lemma}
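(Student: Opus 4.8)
The plan is to reduce both bounds to properties of the index sequence $(J_m)_{m\ge0}$ at which the individual regeneration times of $X$ and $X'$ coincide, i.e.\ $T^{\Sim}_m=T_{J_m}=T'_{J'_m}$, $J_0=0$. Writing $g_m\coloneqq J_m-J_{m-1}\ge1$, I would first record that, conditionally on the $\Xi$-chain up to step $m-1$, the law of $g_m$ has exponential tails \emph{uniformly in the current configuration}: since $g_m$ is the length of the first coordinate block of $\Xi_m$ and each $\tau_i\ge1$, we have $g_m\le T^{\Sim}_m-T^{\Sim}_{m-1}$, and by the Markov property and spatial homogeneity the conditional law of the latter is that of $T^{\Sim}_1$ started afresh, to which Lemma~\ref{lemma:jointrenewaltails} applies. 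I would also note the bookkeeping identities $J_m\ge m$ (so that $\widehat L_n=N_n\coloneqq\#\{m\ge1:J_m\le n\}\le n$ and the constraint ``$m\le n$'' in the definition of $\widehat L_n$ is vacuous) and $L_n=J_{\widehat L_n}=\max\{J_m:J_m\le n\}$, whence $n-L_n$ is exactly the backward recurrence time of $(J_m)$ at $n$.

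For \eqref{eq:jointrenewalcounts1} I would then argue: on $\{n-L_n\ge k\}$ there is an index $m$ with $J_{m-1}\le n-k$ and $J_m>n$, hence $g_m>n-J_{m-1}$. Summing over the position $j=J_{m-1}$ of the last renewal at or before $n$ (a unique step $r$ of the $\Xi$-chain has $J_r=j$, and $\{J_r=j\}$ lies in the $\Xi$-filtration at time $r$) and conditioning there, the uniform exponential tail of $g_{r+1}$ gives $\Pr^\joint_{\mathbf 0,\mathbf 0}(n-L_n\ge k)\le\sum_{j=0}^{n-k}Ce^{-c(n-j)}\le C'e^{-ck}$, uniformly in $n$. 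Integrating the tail yields $\E^\joint_{\mathbf 0,\mathbf 0}[(n-L_n)^2]\le C$; the bound for $L_n'$ is identical after swapping the two walks.

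For \eqref{eq:jointrenewalcounts2} I would take $q\coloneqq1/\mu_\indep$, where $\mu_\indep\coloneqq\E^\indep[g_1]\in[1,\infty)$ is the mean number of individual $X$-regenerations per joint regeneration in the \emph{decoupled} dynamics (the $g_m$ being i.i.d.\ under $\Psi^\indep$; a short renewal-theorem computation in fact gives $\mu_\indep=\E[\tau_1]$, so $q=1/\E[\tau_1]\le1$). From $n=J_{\widehat L_n}+(n-L_n)$ one gets $\widehat L_n-nq=-\mu_\indep^{-1}\big(J_{\widehat L_n}-\mu_\indep\widehat L_n\big)+q(L_n-n)$, and since the last term is $O(1)$ in $L^2$ by \eqref{eq:jointrenewalcounts1} it suffices to prove $\E^\joint_{\mathbf 0,\mathbf 0}\big[\max_{k\le n}(J_k-\mu_\indep k)^2\big]\le Cn$. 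I would Doob-decompose $J_k-\mu_\indep k=M_k+A_k$ with $M_k=\sum_{i\le k}(g_i-\mu(D_{i-1}))$, $A_k=\sum_{i\le k}(\mu(D_{i-1})-\mu_\indep)$, where $D_i\coloneqq\widehat X_i-\widehat X'_i$ and $\mu(x)\coloneqq\E^\joint[g_1\mid D_0=x]$. The martingale part is harmless: its increments have uniformly bounded second moments, so $\E[\max_{k\le n}M_k^2]\le4\E[M_n^2]\le Cn$ by Doob's $L^2$-inequality.

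The main obstacle is the compensator $A_k$. By Lemma~\ref{lemma:couplingjointregen} and spatial homogeneity one has $|\mu(x)-\mu_\indep|\le Ce^{-c\|x\|}$, so $|A_k|\le\bar A_n\coloneqq C\sum_{i\le n}e^{-c\|D_{i-1}\|}$ for $k\le n$, and everything reduces to $\E^\joint_{\mathbf 0,\mathbf 0}[\bar A_n^2]\le Cn$, i.e.\ to controlling how much time the difference chain $(D_i)$ spends near the origin. For $d\ge3$ the chain $(D_i)$ is transient under $\Pr^\joint$ (by the separation mechanism behind Lemma~\ref{lemma:separation1}), and a Green's-function estimate $\sum_i\Pr^\joint(\|D_i\|\le r)\le Cr^2$ gives $\E[\bar A_n^2]\le C$; for $d=2$ one obtains the same with an extra $(\log n)^{O(1)}$ factor, still $\le Cn$. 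The case $d=1$ is where the real work lies: $(D_i)$ is null-recurrent, and the crude bound $\bar A_n\le C(1+R_n)$ combined with the $R_n$-estimates of Lemma~\ref{lem:RnandCnbds} only yields $\E[\bar A_n^2]\le Cn^{3/2}$, which is too weak. What is really needed is a local-CLT-type bound $\E^\joint_{\mathbf 0,\mathbf 0}\big[\#\{i\le n:\|D_i\|\le s\}^2\big]\le Cs^2n$, uniform in $s\ge1$; splitting $\bar A_n$ dyadically in $\|D_{i-1}\|$ then gives $\E[\bar A_n^2]\le C\sum_{r\ge0}e^{-cr}(r+1)^2n\le Cn$. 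Such a bound I would derive by coupling $(D_i)$ under $\Pr^\joint$ with a one-dimensional random walk (valid, via Lemma~\ref{lemma:couplingjointregen}, whenever $\|D_i\|\gg\log n$) and estimating the time spent inside $[-K\log n,K\log n]$ via the excursion decomposition and separation estimate already used for $R_n$ in Lemma~\ref{lem:RnandCnbds}, this time keeping track of the dependence on the threshold $s$. Granting this, $\E[\max_k A_k^2]\le\E[\bar A_n^2]\le Cn$, which closes \eqref{eq:jointrenewalcounts2}; the statements for $\widehat L_n'$ again follow by symmetry.
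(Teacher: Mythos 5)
Your argument for \eqref{eq:jointrenewalcounts1} is fine (the uniform conditional exponential tails for $g_m=J_m-J_{m-1}$ via $g_m\le T^{\Sim}_m-T^{\Sim}_{m-1}$ and Lemma~\ref{lemma:jointrenewaltails}, plus the backward-recurrence decomposition), and so is the algebraic reduction of \eqref{eq:jointrenewalcounts2} to $\E^\joint_{\mathbf 0,\mathbf 0}\big[\bar A_n^2\big]\le Cn$ with $\bar A_n=C\sum_{i\le n}e^{-c\norm{D_{i-1}}}$: the martingale part and the bound $|\mu(x)-\mu_\indep|\le Ce^{-c\norm{x}}$ (after the truncation you leave implicit) are unproblematic. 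The gap is in the case the lemma is actually needed for, $d=1$: the estimate $\E^\joint\big[\#\{i\le n:\norm{D_i}\le s\}^2\big]\le Cs^2 n$, uniformly in $s\ge1$, on which your whole argument now rests, is asserted but not proved, and the route you indicate cannot deliver it with the tools in the paper. Lemma~\ref{lemma:couplingjointregen} only permits a comparison of $(D_i)$ with an honest one-dimensional random walk while the separation is at least of order $\log n$, whereas the weight $e^{-c\norm{D_i}}$ is carried entirely by separations of order $1$, i.e.\ deep inside the region treated as a black box. There the only available control is the corridor construction of Step~1 of Lemma~\ref{lemma:separation1}, whose escape probabilities are exponentially small in the target separation; per entry into the $K\log n$-window this bounds the sojourn time at separation $O(1)$ only by a power of $n$ -- which is exactly why Lemma~\ref{lem:RnandCnbds} yields no more than $\E[R_n^2]\le Cn^{1+2b_2}$, a bound you yourself concede is insufficient. ``Keeping track of the dependence on the threshold $s$'' in the excursion decomposition does not help, because below the $\log n$ scale there is no random-walk comparison whose local-time estimates you could import; for $s=O(1)$ your claimed bound would require genuinely new control of the interaction of the two walks at bounded separation. (The same objection applies, in milder form, to your Green-function claims for $d\ge2$ under $\Pr^\joint$.)

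Note also that, unlike the quantity $A_n^{(1)}$ in the paper, your compensator cannot be rescued by cancellation: $\bar A_n$ is a sum of nonnegative terms, and $\mu(x)-\mu_\indep$ is symmetric under $x\mapsto-x$, so the overcrossing symmetry behind the mixingale argument of Lemma~\ref{lem:RnandCnbds} produces no sign cancellation here; you are truly forced to bound the time spent near the origin, which the paper never controls at the order you need. The paper's own (sketched) proof proceeds differently and is designed to sidestep exactly this: it establishes both bounds first under $\Psi^\indep$, where $(T_m)$ and $(T'_m)$ are independent renewal processes with exponential tails and \eqref{eq:jointrenewalcounts1}--\eqref{eq:jointrenewalcounts2} (with $q=\mu/\widehat\mu$) are classical renewal theory, and then transfers them to $\Psi^\joint$ by enriching the block coupling and reading off the number of individual renewals per joint-renewal block. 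You should either follow that route or supply an actual proof of your local-time-type bound; as written, that step is a genuine gap.
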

\begin{proof}[Sketch of proof]
Note that under $\Psi^\indep$, when the two walks use independent copies of the
cluster, $(T_n)$ and $(T_n')$ are two independent renewal processes
(whose waiting times have exponential tail bounds), hence
$T_n/n \to \mu$, $T^\Sim_n/n \to \widehat{\mu}$ {a.s.\ and in $\mathcal{L}^2$}
with some $0< \mu < \widehat{\mu} < \infty$, and
\eqref{eq:jointrenewalcounts1}, \eqref{eq:jointrenewalcounts2} hold
(with $q=\mu/\widehat{\mu}$).

By suitably ``enriching'' the coupling arguments used above, i.e.\ using
$\Psi^\joint$ instead of $\widehat{\Psi}^\joint$, and then
reading off the number of individual renewals between joint renewals we
see that \eqref{eq:jointrenewalcounts1}, \eqref{eq:jointrenewalcounts2} also
hold for two walks on the same cluster.
\end{proof}

Now write
\begin{align}
  \label{eq:proofquenchedCLTd=1.1}
  (\widetilde X_n, \widetilde X_n') =
  \bigl(\widehat X_{[nq]}, \widehat X_{[nq]}' \bigr)
  +  \bigl(\widehat X_{\widehat L_n} - \widehat X_{[nq]},
  \widehat X_{\widehat L_n'}' - \widehat X_{[nq]}' \bigr)
  +  \bigl(\widetilde X_n-\widetilde X_{L_n},
  \widetilde X_n'-\widetilde X_{L_n'}' \bigr),
\end{align}
hence
\begin{align}
  \label{eq:jointtoindrenewal}
  & \E\left[ \bigl| \widetilde X_n - \widehat X_{[nq]} \bigr| +
    \bigl| \widetilde X_n' - \widehat X_{[nq]}' \bigr| \right] \notag \\
  & \leq
  \E\left[ \bigl| \widehat X_{\widehat L_n} - \widehat X_{[nq]} \bigr|
    + \bigl| \widehat X_{\widehat L_n'}' - \widehat X_{[nq]}' \bigr| \right]
  + \E\left[ \bigl| \widetilde X_n-\widetilde X_{L_n}\bigr|
    + \bigl| \widetilde X_n'-\widetilde X_{L_n'}' \bigr| \right]
  \leq \frac{C}{n^b}
\end{align}
for some $C<\infty$, $b > 0$, using Lemma~\ref{lem:jointrenewalcounts}
and the fact that the increments of $\widetilde X$, $\widetilde X'$,
$\widehat X$, and $\widehat X'$ have uniformly exponentially bounded
tails, and Chebyshev's inequality.
\eqref{eq:jointtoindrenewal} together with Lemma~\ref{lem:1djointCLTXhat}
implies Lemma~\ref{lemma:qCLTars} for $d=1$.
\hfill \qed

\begin{appendix}

\section{An auxiliary result}
\label{sect:auxproofs}

Recall the definition of $\tau^{\mathbf{0}}$ in (and after) equation
\eqref{def:tauA}. The following result is ``folklore'' but we did not
find a suitable reference (only for the corresponding contact process
version of the result or for a special case; see
Remark~\ref{rem:known-result} below).
\begin{lemma} \label{lem:exp-bound-perc}
  For $p>p_c$ there exist $C,\gamma \in (0,\infty)$ such that
  \begin{align}
    \label{est:exp-bound-perc}
    P_p(n \leq \tau^{\mathbf 0} < \infty) \leq Ce^{-\gamma n}.
  \end{align}
\end{lemma}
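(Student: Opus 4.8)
The plan is to prove the exponential tail bound for the height of a finite oriented percolation cluster by combining the known exponential decay in the subcritical phase with a block/renormalisation argument, together with the fact that in the supercritical phase a cluster that dies out must be ``locally subcritical-looking'' at every scale. The cleanest route, which I would take, uses the standard fact that for supercritical oriented percolation the process restricted to a large space-time box behaves like a supercritical process and, conditioned on extinction, the extinction happens fast. More precisely, I would reduce the statement to a statement about the discrete-time contact process $(\eta_n^{\mathbf 0})$ started from a single site: since $\{\tau^{\mathbf 0} \ge n\} = \{\eta_{n-1}^{\mathbf 0} \ne \emptyset\}$, I need $P_p(\eta_{n-1}^{\mathbf 0} \ne \emptyset, \eta_m^{\mathbf 0} = \emptyset \text{ eventually}) \le Ce^{-\gamma n}$.

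\textbf{Step 1 (block construction).} Invoke the standard renormalisation of supercritical oriented percolation (e.g.\ Grimmett--Hiemer, or Durrett's block arguments, or Bezuidenhout--Grimmett adapted to discrete time): for $p > p_c$ there is $N$ and a coupling of the restriction of $\omega$ to suitable space-time blocks of side $N$ with a supercritical oriented site percolation on a renormalised lattice with parameter as close to $1$ as desired. Under this coupling, if the original cluster survives to height $n$ then the renormalised cluster survives to renormalised height $\lfloor n/N\rfloor$; conversely, extinction of the original process forces extinction of the renormalised one.

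\textbf{Step 2 (exponential decay of the renormalised extinction time).} For oriented site percolation with density $q$ very close to $1$, the event that the cluster of the origin survives to level $k$ but is finite has probability $\le C e^{-\gamma k}$. This is itself a ``folklore'' fact but follows from a contour/Peierls argument: a finite cluster of height $k$ contains a ``blocking surface'' of closed sites separating it from infinity, and the number of such surfaces of a given size grows only exponentially while each closed site costs $1-q$; alternatively one cites the exponential decay of the radius of a finite cluster from the literature on oriented percolation (Durrett~\cite{MR757768}), noting that the height dominates up to constants any reasonable notion of radius for a finite cluster. Combining with Step~1 gives $P_p(n \le \tau^{\mathbf 0} < \infty) \le P(n/N \le \tau^{\mathrm{ren}} < \infty) \le C e^{-\gamma n/N}$, which is the claim.

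\textbf{The main obstacle} will be making Step~1 fully rigorous in a self-contained way, since a careful block construction for the \emph{discrete-time} contact process with the required monotone coupling (survival $\Leftrightarrow$ survival, and the extinction event mapping correctly) is somewhat technical; the honest options are either to import it wholesale from Grimmett--Hiemer / Liggett-type references, or to do a direct Peierls-type argument on the original lattice. A direct contour argument would run: on $\{n \le \tau^{\mathbf 0} < \infty\}$ the set $\CC_{\mathrm{fin}} := \{(x,m): (\mathbf 0, 0) \to (x,m)\}$ is a finite connected set of open sites of height $\ge n$, and its outer boundary consists of $\gtrsim n$ closed sites forming a connected ``blocking'' structure; summing $(1-p)^{(\#\text{boundary})}$ over the at most exponentially many shapes of the boundary gives the bound provided $1-p$ is small, and one bootstraps from small $1-p$ to all $p>p_c$ via the block construction again — so one cannot entirely avoid renormalisation. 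I would therefore present the proof as: reduce to the renormalised lattice via a cited block construction (Step~1), then either cite or give the short Peierls argument for Step~2, and note $\{\tau^{\mathbf 0}\ge n\} = \{\eta^{\mathbf 0}_{n-1}\ne\emptyset\}$ to translate back.
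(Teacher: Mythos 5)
There is a genuine gap, and it sits exactly where you locate the ``main obstacle'': the two-sided coupling you assume in Step~1 is not what the block constructions you cite provide, and as stated it is false. The Grimmett--Hiemer/Bezuidenhout--Grimmett type renormalisation yields a coarse-grained oriented percolation with parameter $>p_c^{(1)}$ that is \emph{dominated by} suitable block events of the original process; hence ``renormalised survives $\Rightarrow$ original survives'' (and contrapositively, extinction of the original forces extinction of the renormalised). It does \emph{not} give the implication you need for the inequality $\Pr_p(n \le \tau^{\mathbf 0} < \infty) \le \Pr(n/N \le \tau^{\mathrm{ren}} < \infty)$, namely that survival of the original single-site cluster to height $n$ forces the renormalised cluster to survive to height of order $n/N$. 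Indeed, the comparison can only be started from a ``seed'' (a fully occupied translate of $[-r,r]^d$ at the bottom of a block), and a cluster grown from the single site $(\mathbf 0,0)$ may reach height $n$ while no such seed, and hence no open renormalised site, is ever produced at the origin's block; the renormalised cluster can die immediately even though $\tau^{\mathbf 0}\ge n$. Your fallback (Peierls for $1-p$ small, then ``bootstrap via the block construction'') runs into the same missing direction, so acknowledging the obstacle as a citation/technicality does not close it.

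What is needed instead (and what the paper does) is a probabilistic \emph{restart} argument replacing the deterministic height-to-height implication: with $\delta=\Pr(\eta_r^{\mathbf 0}=[-r,r]^d)>0$, one makes repeated attempts, each of duration $r$, to produce a fully occupied box inside the living cluster; the number of attempts is geometric, and after success the comparison with the coarse-grained percolation (parameter $>p_c^{(1)}$) starts, whose extinction time conditioned on being finite has exponential tails by Durrett's $1+1$-dimensional result. If the coarse process dies while the original cluster is still alive, one repeats the whole procedure, and the number of such outer rounds is again geometrically bounded because each comparison survives forever with probability bounded away from zero. On $\{\tau^{\mathbf 0}<\infty\}$ the extinction time is dominated by the resulting geometric sum of exponentially tailed durations, which gives \eqref{est:exp-bound-perc}. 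Your Step~2 (exponential conditional extinction bound for the high-density/renormalised process, via Durrett or a contour argument) is fine and coincides with the paper's input; it is the translation step that must be replaced by an argument of this restart type rather than by a coupling inclusion.
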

\begin{remark}
  \label{rem:known-result}
  The above result is proven in \cite{Durrett:84} for the ``conventional
  oriented percolation'' on $\Z \times \Z_+$ with critical value $p_c^{(1)}$.
  Dominating the ``conventional oriented percolation'' it is easy
  to see that the above result is true for any $d\ge 1$ and $p >p_c^{(1)}$
  where $p_c^{(1)}$ is the critical value for the ``conventional oriented
  percolation'' on $\Z \times \Z_+$ considered in \cite{Durrett:84}. It is also
  clear that for $d\ge 1$ we have $p_c \leq p_c^{(1)}$. Our task here is to
  extend the result to $p \in (p_c, p_c^{(1)}]$.
\end{remark}

\begin{proof}
  We will adapt arguments from \citep[][p.~57-58]{Liggett:99}, where this result
  was proven for the contact process.

  According to \citep[][p.~7 in arXiv version]{GrimmetHiemer:02} there exist $r$
  (large enough) such that
  \begin{align*}
    \Pr (\tau^{[-r,r]^d} < \infty) <  \varepsilon.
  \end{align*}
  Furthermore, by standard arguments, on a coarse grained grid one can construct
  a percolation structure with probability of open sites
  $p_{\textnormal{coarse}} > p_c^{(1)}$, such that it is dominated by the
  process of suitably defined space-time blocks of the original percolation
  \citep[see][]{GrimmetHiemer:02}, where the blocks are such that on the
  ``bottom'' of the block all sites in some space-time translate of
  $[-r,r]^d\times\{0\}$ are open.

  Now the idea is to show that for large $n$ on the event $\{\tau^{A}>n\} $ it
  is likely that the ``domination'' described above has started.

  We set
  \begin{align*}
    \delta\coloneqq \Pr(\eta_r^{0}=[-r,r]^d).
  \end{align*}
  Now we define a random variable $N$ (measurable w.r.t.\ to $\sigma(\Omega_p)$)
  such that
  \begin{align*}
    \Pr(N=k) = \delta(1-\delta)^k, \; k \ge 0,
  \end{align*}
  and
  \begin{align*}
    \text{either } \eta_{Nr}^{0} = \emptyset, \; \text{ or } \; x + [-r,r]^d
    \subset \eta_{(N+1)r}^{0} \text{ for some } x \in \Z^d.
  \end{align*}
  Set $N=0$ if $\eta_r^0=[-r,r]^d$. If $\eta_r^{0} \ne [-r,r]^d$, i.e.\ on $\{N
  >0\}$ we have either $\eta_r^{0}=\emptyset$ or $\eta_r^{0}\ne \emptyset$. In
  the first case $\eta_n^{0} =\emptyset$ for all $n \ge r$ and therefore
  $N\ge1$. In the second case, restart (a subprocess) in some $x \in
  \eta_r^{0}$, and let $N=1$ if
  \begin{align*}
    {}_r \eta_{2r}^{x} \subset x + [-r,r]^d.
  \end{align*}
  Here, for $m\le n$ we use ${}_m \eta_{n}^{x}$ to denote the discrete time
  contact process at time $n$ starting in $\{x\}$ at time $m$. Again on the
  complement $\{N=1\}$ either ${}_r \eta_{2r}^{x} = \emptyset$, in which case $N
  \ge 2$, or ${}_r \eta_{2r}^{x} \ne \emptyset$, in which case we can proceed as
  before.

  If $x +[-r,r]^d \subset \eta_{(N+1)r}^{0}$ for some $x \in \Z^d$ then we can
  start the coupling with percolation on a coarse grained grid described at the
  beginning of the proof.

  Let $(N+1)r+M$ be the extinction time of this block percolation process. Note
  that $(N+1)r$ is the time at which the comparison starts and $M$ is the
  extinction time of the discrete contact process with probability of open sites
  given by $p_{\textnormal{coarse}}$.

  As noted in Remark~\ref{rem:known-result} we have $\Pr(M
  >n|M<\infty) \le C e^{-\gamma n}$ for suitable $C,\gamma >0$. If
  $M=\infty$ then $\tau^{\mathbf 0}=\infty$. If $M<\infty$ then at
  time $M+(N+1)r$ the configuration $\eta_{M+(N+1)r}$ is empty or not.
  If it is not empty then we repeat the procedure and obtain an i.i.d.
  sequence of independent random variables $N_i$ with the same law as
  $N$ and independent random variables $M_i$ with the same law as $M$
  conditioned on $M<\infty$. Let $L$ be such that at time
  \begin{align*}
    \sigma = \sum^{L}_{i=1} ((N_i+1)r+M_i)
  \end{align*}
  either $\eta_{\sigma}^0 =\emptyset$ or $\tau^{\mathbf 0}=\infty$. Thus, $\sigma >
  \tau^{\mathbf 0}$ on $\{\tau^{\mathbf 0} < \infty\}$ and we obtain
  \begin{align*}
    \Pr(n < \tau^{\mathbf 0} < \infty) \le \Pr (\sigma > n) \le C e^{-\gamma n}
\end{align*}
for suitable $C,\gamma>0$.
\end{proof}

\begin{remark}
  \label{rem:exp-bound-perc-unif}
  Inspection of the proof of Lemma~\ref{lem:exp-bound-perc}
  shows that the constants $\gamma$ and $C$ in
  \eqref{est:exp-bound-perc} can be chosen to apply uniformly
  in $p \in [p_c+\delta,1]$ for any $\delta>0$.
\end{remark}

\end{appendix}

\subsection*{Acknowledgements}
We would like to thank two anonymous referees for their careful
comments which helped to improve the presentation of the paper.
\noindent
Support by Deutsche Forschungsgemeinschaft
through DFG grant BI\ 1058/3-1 (M.B.), DFG grant GA\ 582/7-1 (N.G.)
and DFG grant Pf\ 672/6-1 (A.D.) is gratefully acknowledged.

\bibliographystyle{chicago}

\vspace{2.3cm}

{ \footnotesize \parindent 0pt \parskip 3mm

  Matthias Birkner:
  Johannes Gutenberg-Universit\"at Mainz,
  Institut f\"ur Mathematik,
  Fachbereich 08 -- Mathematik, Physik und Informatik,
  Staudingerweg~9,
  55099 Mainz,
  Germany.

  Ji\v r\'i \v Cern\' y:
  University of Vienna,
  Faculty of Mathematics,
  Nordbergstrasse~15,
  1090 Vienna,
  Austria.

  Andrej Depperschmidt:
  Abteilung f\"ur Mathematische Stochastik,
  Universit\"at Freiburg,
  Eckerstr.~1,
  79104 Freiburg i.\ Br.,
  Germany.

  Nina Gantert:
  Technische Universit\"at M\"unchen,
  Fakult\"at f\"ur Mathematik,
  Boltzmannstra\ss e~3,
  85748 Garching bei M\"unchen,
  Germany.
}

\end{document}